\def\R{\mathbb{R}}
\def\N{\mathbb{N}}
\def\Z{\mathbb{Z}}
\def\C{\mathbb{C}} 
\def\H{\mathbb{H}} 
\newcommand{\Op}{\mathrm{Op}}
\newcommand{\OpW}{\mathrm{Op^{\mathcal{W}}}}
\renewcommand{\geq}{\geqslant}
\renewcommand{\leq}{\leqslant}
\newtheorem{theorem}{Theorem}[section]
\newtheorem{proposition}{Proposition}[section]
\newtheorem{corollary}{Corollary}[section]
\newtheorem{definition}{Definition}[section]
\newtheorem{lemma}{Lemma}[section]
\theoremstyle{definition}
\theoremstyle{definition}\newtheorem{remark}{Remark}[section]
\newenvironment{customthm}[1]
{\innercustomthm}
{\endinnercustomthm}
\title{Spectral asymptotics for sub-Riemannian Laplacians. I: \\ quantum ergodicity and quantum limits \\ in the 3D contact case.\\
\textit{\small To the memory of Louis Boutet de Monvel (1941--2014)} }
\author{Yves Colin de Verdi\`ere\footnote{Universit\'e de Grenoble-Alpes,
Institut Fourier, Unit{\'e} mixte de recherche CNRS-UJF 5582, BP 74, 38402-Saint Martin d'H\`eres Cedex, France 
(\texttt{yves.colin-de-verdiere@univ-grenoble-alpes.fr}).}
\and
Luc Hillairet \footnote{Universit\'e d'Orl\'eans, F\'ed\'eration Denis Poisson, Laboratoire MAPMO, route de Chartres, 45067 Orl\'eans Cedex 2, France (\texttt{luc.hillairet@univ-orleans.fr}). }
\and
Emmanuel Tr\'elat\footnote{Sorbonne Universit\'es, UPMC Univ Paris 06, CNRS UMR 7598, Laboratoire Jacques-Louis Lions, F-75005, Paris, France (\texttt{emmanuel.trelat@upmc.fr}).}
}
\begin{document}

\maketitle

\newpage 
\begin{abstract}
This is the first paper of a series in which we plan to study spectral asymptotics for sub-Riemannian Laplacians and to extend
 results that are classical in the Riemannian case concerning Weyl measures, quantum limits, quantum ergodicity, quasi-modes,
 trace formulae.
Even if hypoelliptic operators have been well studied from the point of view of PDEs, global geometrical and dynamical aspects have
 not been the subject of much attention. As we will see, already in the simplest case, the statements of the results in the
 sub-Riemannian setting are quite different from those in the Riemannian one.  

Let us consider a sub-Riemannian (sR) metric on a closed three-dimensional manifold with an oriented contact distribution.
 There exists a privileged choice of the contact form, with an associated Reeb vector field and a canonical volume form that
 coincides with the Popp measure. We establish a Quantum Ergodicity (QE) theorem for the eigenfunctions of any associated sR
 Laplacian under the assumption that the Reeb flow is ergodic. The limit measure is given by the normalized Popp measure.

This is the first time that such a result is established for a hypoelliptic operator, whereas the usual Shnirelman theorem yields
 QE for the Laplace-Beltrami operator on a closed Riemannian manifold with ergodic geodesic flow.

To prove our theorem, we first establish a microlocal Weyl law, which allows us to identify the limit measure and to prove the
 microlocal concentration of the eigenfunctions on the characteristic manifold of the sR Laplacian. Then, we derive a Birkhoff
 normal form along this characteristic manifold, thus showing that, in some sense, all 3D contact structures are microlocally
 equivalent. The quantum version of this normal form provides a useful microlocal factorization of the sR Laplacian. Using the
 normal form, the factorization and the ergodicity assumption, we finally establish a variance estimate, from which QE follows.


We also obtain a second result, which is valid without any ergodicity assumption: every Quantum Limit (QL) can be decomposed
 in a  sum of two mutually singular measures: the first measure is supported on the unit cotangent bundle and is invariant under
 the sR geodesic flow, and the second measure is supported on the characteristic manifold of the sR Laplacian and is invariant under
 the lift of the Reeb flow. Moreover, we prove that the first measure is zero for most QLs.
\end{abstract}

\newpage
\tableofcontents
\newpage

\section{Introduction and main results}\label{sec1}

\textit{Quantum ergodicity} (QE) theorems started with the seminal note \cite{Shn-74} by A. Shnirelman (see also \cite{Shn-93}).
His arguments were made precise in \cite{yCdV-85,Zel-87}, and then extended to the case of manifolds with boundary in \cite{GL-93,ZZ-96},
to the semi-classical regime in \cite{HMR-87} and  to the case of discontinuous metrics in \cite{JSSC-14}.
A weak version of QE is the following:
let $M$ be a compact metric space, endowed with a measure $\mu$, and let $T$ be a self-adjoint operator on $L^2(M,\mu)$,
 bounded below and having a compact resolvent (and hence a discrete spectrum). Let $(\phi_n)_{n\in\N^*}$ be a (complex-valued)
 Hilbert basis of $L^2(M,\mu)$, consisting of eigenfunctions of $T$, associated with the ordered sequence of
 eigenvalues $\lambda_1\leq\cdots\leq\lambda_n\leq\cdots$. 
We say that QE holds for the eigenbasis $(\phi_n)_{n\in\N^*}$ of  $T$ if there exist a probability measure $\nu$ on $M$
 and a density-one sequence $(n_j)_{j\in\N^*}$ of
 positive integers such that the sequence of probability measures $|\phi_{n_j}|^2 d\mu$ converges weakly to $\nu$.
The measure $\nu$ may be different from some scalar multiple of $\mu$,
 and may even be singular with respect to $\mu$ (see \cite{CHT-II}). In some cases, microlocal versions of QE hold true  and are
  stated in terms of  pseudo-differential operators.

Such a property provides some insight on the asymptotic behavior of  eigenfunctions of the operator $T$  in the limit of
 large eigenvalues. When $M$ is a compact
 Riemannian manifold and $T$ is the usual Laplace-Beltrami operator, QE is established under the assumption that
 the geodesic flow is  ergodic, and the limit measure $\nu$ is then 
the projection on $M$ of the normalized Liouville measure on the unit cotangent bundle of $M$.

To our knowledge, similar results are known in different contexts, but always for elliptic operators.
In the present paper, we establish a QE theorem for \textit{sub-Riemannian Laplacians} on a closed three-dimensional
 contact sub-Riemannian manifold without boundary.
Let us describe our main results.

\medskip

Let $M$ be a smooth connected compact three-dimensional manifold, equipped with an arbitrary non-vanishing  smooth density $\mu$
 (the associated measure is denoted by $\mu$ as well). Let $D\subset TM$ be a smooth oriented subbundle of codimension one.
Let $g$ be a smooth Riemannian metric on $D$. We assume that $D$ is a contact structure, so that there exists
 a unique contact form $\alpha_g$ defining $D$ (i.e., $D=\ker \alpha_g$) such that ${(d\alpha_g)}_{\vert D}$ coincides with the oriented
 volume form induced by $g$ on $D$.
Let $Z$ be the associated Reeb vector field, defined by $\alpha_g(Z)=1$ and $d\alpha_g(Z,\cdot)=0$.
The vector field $Z$ is transversal to the distribution $D$.
The flow generated by $Z$ on $M$ is called the \textit{Reeb flow}. It follows from Cartan's formula that the Reeb flow
preserves the measure given by the density $|\alpha_g \wedge d\alpha_g|$, also called the Popp measure in sub-Riemannian geometry,
and denoted by $dP$. Note that there is no relation between $\mu$ and the Popp measure.
 We also define the \textit{Popp probability measure} $\nu$ by
 $d\nu=\frac{1}{P(M)}dP$.

Let $\triangle_{sR}$ be the sub-Riemannian (sR) Laplacian associated with the contact sub-Riemannian structure $(M,D,g)$
 and with the measure $\mu$ (see Section \ref{sec:rappels} for some reminders and for the precise definition).
The operator $-\triangle_{sR}$ is self-adjoint on $L^2(M,\mu)$, is hypoelliptic, has a compact resolvent and thus has a discrete spectrum
$0 = \lambda_1 <\lambda_2 \leq \cdots\leq\lambda_n\leq\cdots$, with $\lambda_n\rightarrow +\infty$ as $n\rightarrow+\infty$.
The operator  $\triangle_{sR}$ commutes with complex conjugation (i.e., is ``real'') and hence admits an eigenbasis of real-valued eigenfunctions.

\medskip

Here, and throughout the paper, the notation $\langle\ ,\ \rangle$ stands for the (Hermitian) inner product in $L^2(M,\mu)$.

\newpage

Our first main result is the following.

\begin{customthm}{A}\label{thm1} 
{\it
We assume that the Reeb flow is \textit{ergodic} on $(M,\nu)$.\footnote{The Reeb flow $(\mathcal{R}_t)_{t\in\R}$
 generated on $M$ by the vector field $Z$ leaves the measure $\nu$ invariant. We say that this flow is ergodic on $(M,\nu)$ if any measurable invariant subset of $M$ is of measure $0$ or $1$. This implies, by the von Neumann ergodic theorem, that, for every continuous function $f$ on $M$, we have
$$
\frac{1}{T}\int_{0}^T f\circ \mathcal{R}_t(x) \, dt  \rightarrow  \int_M f \, d\nu ,
$$
in $L^2( M,\nu)$, as $T\rightarrow +\infty$.}

Then,  for any  orthonormal Hilbert basis $(\phi_n)_{n\in \N^*}$ of $L^2(M,\mu)$, consisting of eigenfunctions
 of $\triangle_{sR}$ associated with the eigenvalues $(\lambda_n)_{n\in\N^*}$ labelled  in increasing order, there exists a density-one sequence
 $(n_j)_{j\in\N^*}$ of positive integers such that
$$
\lim _{j\rightarrow +\infty} 
\left|\phi_{n_j}\right|^2 \mu = \nu ,
$$
where the limit is taken in the (weak) sense of duality with continuous functions.

For any \textit{real-valued} orthonormal Hilbert basis $(\phi_n)_{n\in \N^*}$ of $L^2(M,\mu)$, consisting of eigenfunctions
 of $\triangle_{sR}$ associated with the eigenvalues $(\lambda_n)_{n\in\N^*}$ labelled  in increasing order, there exists a density-one sequence
 $(n_j)_{j\in\N^*}$ of positive integers such that
\begin{equation*}
\lim_{j\rightarrow+\infty} \left\langle A\phi_{n_j}, \phi_{n_j}\right\rangle  =  \frac{1}{2}\int_M \big( a(q,\alpha_g(q))
 + a(q,-\alpha_g(q)) \big) \, d\nu (q),
\end{equation*}
where the scalar product is in $L^2 (M,\mu )$, 
for every classical pseudo-differential operator $A$ of order $0$ with homogeneous principal symbol $a$. If $a$ is even then the assumption on the basis to be real-valued can be dropped.
}
\end{customthm}

\begin{remark}
In the second assertion, we take a real-valued eigenbasis to deal with
the fact that the unit subbundle of $\Sigma $ has two connected
components. An example where the same problem occurs is the Fourier
basis on the circle $\R /2\pi \Z .$ In that case, the unit cotangent
bundle also has two connected components and it is known that 
the Shnirelman theorem is valid for the basis consisting of sines and
cosines, but not for the exponential basis.

In Section  \ref{sec:non-or}, we also show how to extend the above result to the case where $D$ is not orientable (see Theorem \ref{thm8.1}).
\end{remark}

Our result is valid for any choice of a smooth (non-vanishing) density $\mu$ on $M$. This is due to the fact that,
 up to an explicit unitary gauge transform, changing the density modifies the sR Laplacian with an additional bounded operator
 (see Remark \ref{rk:changemu1} for a precise statement) and this addition plays no role in the proofs.

\paragraph{Notations.}
We adopt the following notations.

Let $E $ be an arbitrary vector bundle over $M$. The sphere bundle $SE$ is the quotient of $E\setminus \{0\}$ by the positive homotheties, i.e., $SE = (E\setminus \{0\})/(0,+\infty)$.
Homegeneous functions of order $0$ on $E\setminus \{0\}$ are identified with functions on $SE$. The bundle $ST^\star M$, called the co-sphere bundle, is denoted by $S^\star M$.

Let $h$ be an arbitrary smooth, possibly degenerate, metric on $E$
that is associated to a nonnegative quadratic form.
 The unit bundle $U_hE $ is the subset of vectors of $E$ of $h$-semi-norm equal to one.
If $h$ is non degenerate, then $U_h E$ is canonically identified with $SE$; otherwise, the cylinder bundle $U_hE$ is
 identified with an open subset
 of $SE$.
The bundle $U_{h}T^\star M$ is more shortly denoted by $U_h M$ or even by $U^\star M$.

\medskip

The classical Shnirelman theorem is established in the Riemannian setting on a Riemannian manifold $(X,h)$
 under the assumption that the Riemannian geodesic flow is ergodic on $(S^\star X,\lambda_L)$, where the limit measure is the normalized 
Liouville measure $\lambda_L$ on $S^\star X\simeq U_{h^\star} X=\{h^\star =1\}$ (where $h^\star$ is the co-metric
 on $T^\star X$ associated with the Riemannian metric $h$).

In contrast,  the Liouville measure
 on the unit bundle $U_{g^\star} M=\{g^\star =1\}$ has infinite total mass, and hence the QE property cannot be formulated in
 terms of the sR geodesic flow: ergodicity has no sense in this case.
We recall that the co-metric $g^\star: T^\star M \rightarrow \R $ is defined as follows: 
$g^\star (q,\cdot)$ is the semi-positive quadratic form on $T_q^\star M$ 
defined by  $g^\star (q,p)= \| p_{| D_q}\|_q ^2 $ where the norm $\| .\|_q$  is the norm on $D^\star$ dual of the norm $g_q$.

Another interesting difference is that, in the Riemannian setting, QE says that most eigenfunctions equidistribute in the phase space,
 whereas here, in the 3D contact case, they concentrate on $\Sigma=D^\perp$ (where $\perp$ is in the sense of duality), 
the contact cone that is also  the characteristic manifold $(g^\star )^{-1}(0)$
 of $\triangle_{sR}$ (see the microlocal Weyl formula established in Theorem \ref{theo:weyl}).

\begin{remark}
To our knowledge, Theorem \ref{thm1} is the first QE result established for a hypoelliptic operator.
Our proof is specific to the 3D contact case, but the result can probably be extended to other sub-Riemannian Laplacians.
It is likely that the microlocal Weyl formula (Theorem \ref{theo:weyl}) can be generalized to equiregular sub-Riemannian structures.
The simplest nonregular case, the Martinet case, is already more sophisticated (see \cite{CHT-II,Mo-95}), and it is difficult to identify the adequate dynamics and the appropriate invariant measure being given by the microlocal Weyl formula.
The relationship with abnormal geodesics is, in particular, an interesting issue.
\end{remark}

\begin{remark}
We call \textit{spectral invariant (of the sub-Riemannian structure)} any
quantity that can be computed knowing only the associated spectral
sequence $(\lambda_n)_{n\in \N^*}$.
It is interesting to note that, as a consequence of the Weyl asymptotic formula (Section \ref{sec:micWeyl}), 
 the Popp volume $P(M)$ of $M$ is such a spectral
 invariant of the sub-Riemannian
 structure (and this, for any choice of $d\mu$). 

When $M=\mathbb{S}^3$, then $1/P(M)$ is the asymptotic Hopf invariant of
 the Reeb vector field $Z$ 
(with respect to the Popp probability measure $\nu$) introduced in
\cite{Ar-86}. It follows that the asymptotic Hopf invariant also is a
spectral invariant.
\end{remark}

Let $(\psi_j)_{j\in\N^*}$ be an arbitrary orthonormal family of $L^2(M,\mu)$.
Let $a:S^\star M \rightarrow \C$ be a smooth function. In Appendix \ref{app:PDO}, we recall how to \textit{quantize} $a$, i.e., how to associate
to $a$ a bounded pseudo-differential operator  $\Op(a)$.
  We set $\mu_j(a) = \langle \Op(a)\psi_j,\psi_j\rangle$,
 for every $j\in\N^*$. The measure $\mu_j$ is asymptotically positive, and any
 closure point (weak limit) of $(\mu_j)_{j\in\N^*}$ is a probability measure on $S^\star M$, called a \textit{quantum limit} (QL),
 or a \textit{semi-classical measure},  associated with the family $(\psi_j)_{j\in\N^*}$.

Theorem \ref{thm1} says that, under the ergodicity assumption of the Reeb flow, the Popp probability measure $\nu$, which is
 invariant under the Reeb flow, is the ``main" QL associated with any eigenbasis.

Our second main result hereafter provides an insight on QLs in the 3D contact case in greater generality, without any ergodicity assumption.
In order to state it, we identify $S^\star M$ with the union of $U^\star M =\{ g^\star =1 \}$ (which is a cylinder bundle) and of $S\Sigma$
 (which is a two-fold covering of $M$): each fiber is obtained by compactifying a cylinder with two points at infinity.

\begin{customthm}{B}\label{thm2}
{\it 
Let $(\phi_n)_{n\in \N^*}$ be an orthonormal Hilbert basis of $L^2(M,\mu)$, consisting of eigenfunctions of $\triangle_{sR}$ associated
 with the eigenvalues $(\lambda_n)_{n\in\N^*}$ labelled in increasing order.
\begin{enumerate}
\item Let $\beta$ be a QL associated with the subsequence family $(\phi_{n_j})_{j\in\N^*}$. Using the above identification
 $S^\star M = U^\star M \cup S\Sigma$, the probability measure $\beta$ can be written as the sum $\beta =\beta_0 + \beta_\infty $
 of two mutually singular measures such that:
\begin{itemize}
\item $\beta_0(S\Sigma )=0$  and  $\beta_0$ is invariant under the sR geodesic flow associated with the sR metric $g$,
\item $\beta_\infty$ is supported on $S\Sigma $ and is invariant under the lift to $S\Sigma$
of the Reeb flow.
\end{itemize} 
\item There exists a density-one sequence $(n_k)_{k\in\N^*}$ of positive integers such that, if $\beta$ is a QL associated with a subsequence
of $(n_k)_{k\in\N^*}$, then the support of $\beta$ is contained in $S\Sigma$, i.e., $\beta_0=0$ in the previous
 decomposition.
\end{enumerate}
}
\end{customthm}

\begin{remark}
The decomposition and the statement on $\beta_0$ are actually valid
for any sR Laplacian, not only in the 3D contact case. 
A stimulating open question is to establish invariance properties of $\beta_\infty$ for more general sR geometries.
\end{remark}

In Theorem \ref{thm2}, we do not assume that the eigenbasis is real-valued.

In the classical Riemannian case, it is well known that any QL associated with a family of eigenfunctions is invariant under
 the geodesic flow. 
Indeed, denoting by $\exp(it\sqrt{-\triangle})$ the half-wave propagator, we have
$$
\left\langle \exp(-it\sqrt{-\triangle}) \Op(a) \exp(it\sqrt{-\triangle}) \phi_n,\phi_n\right\rangle = \mu_n(a) ,
$$
for every $t\in\R$, for every $n\in\N^*$, and for every classical symbol of order $0$. By the Egorov theorem,
 $\exp(-it\sqrt{-\triangle}) \Op(a) \exp(it\sqrt{-\triangle})$ is a pseudo-differential operator of order $0$,
 with principal symbol $a\circ\exp(t\vec{H})$, where $\exp(t\vec{H})$ is the Hamiltonian geodesic flow. The invariance property follows.

In the sub-Riemannian 3D contact case, the proof is not so simple and follows from arguments used to prove Theorem \ref{thm1}. In particular,
 Lemma \ref{lemcrochetnul} in that proof serves as a substitute for the Egorov theorem (see Section \ref{main:lemma}). 
It is crucial in the proof to first decompose the QL into the sum of a part that is supported away of $S\Sigma$ and of a part that
 is supported on $S\Sigma$, before proving that the latter is invariant under the Reeb flow. 

\paragraph{A general path towards QE.}
Let us indicate some ideas behind the proof of Theorem \ref{thm1}. We follow the general path towards establishing the QE property, as clarified, e.g., in \cite{Ze-10}.
We set $N(\lambda)=\# \{n \mid \lambda_n \leq \lambda \}$.

The first step consists in establishing a \textit{local Weyl law} and a \textit{microlocal Weyl law}:
both are independent of the eigenbasis. The local Weyl law provides
some information on the average localization of eigenfunctions in $M$,
while the microlocal Weyl law provides some similar information in the
Fourier-momentum space $S^\star M$. 
More precisely, the microlocal Weyl law states that
\begin{equation}\label{localWeyllaw}
\lim _{\lambda \rightarrow \infty }\frac{1}{N(\lambda)}\sum_{\lambda_n\leq \lambda } \left\langle A
 \phi_n, \phi_n\right\rangle =\bar{a}= \int _{S^\star M } a\, dW_\triangle,
\end{equation}
for every pseudo-differential operator $A$ of order $0$ with a positively homogeneous principal symbol $a$,
 where $S ^\star M$ is the unit sphere bundle of $T^\star M$, and $ dW_\triangle $ is a probability measure that we call the
 \textit{microlocal Weyl measure} (see Definition \ref{defi:weyl-measure} in Section \ref{sec:micWeyl}).
This Ces\'aro convergence property can often be established under weak
assumptions, and without any ergodicity property.

The second step consists in proving the \textit{variance estimate}
\begin{equation}\label{varianceestimate}
\lim _{\lambda \rightarrow \infty }\frac{1}{N(\lambda)}\sum_{\lambda_n\leq \lambda }
\left\vert \left\langle (A-\bar a\, \mathrm{id}) \phi_n , \phi_n \right\rangle \right\vert^2 =0 .
\end{equation}
The variance estimate usually follows by combining the microlocal Weyl law \eqref{localWeyllaw} with ergodicity properties of some
 associated classical dynamics and with an Egorov theorem. Actually, the following infinitesimal version of the Egorov theorem suffices:
$$
\left\langle[A,\sqrt{\triangle}] \phi_n ,\phi_n  \right\rangle=0 ,
$$
or even the weaker property
$$
\lim _{\lambda \rightarrow \infty }\frac{1}{N(\lambda)}\sum_{\lambda_n\leq \lambda }
\left\vert \left\langle \langle[A,\sqrt{\triangle}] \phi_n , \phi_n  \right\rangle \right\vert^2 =0 .
$$
Then  QE follows from the two properties above. Indeed, for a fixed pseudo-differential operator $A$ of order $0$, it follows from
 \eqref{varianceestimate} and from a well known lemma\footnote{This lemma states that, given a bounded sequence $(u_n)_{n\in\N}$ of nonnegative
 real numbers, the Ces\'aro mean $\frac{1}{n}\sum_{k=0}^{n-1}u_k$ converges to $0$ if and only if there exists a subset $S\subset\N$ of density
 one such that $(u_k)_{k\in S}$ converges to $0$. We recall that $S$ is of density one if $\frac{1}{n}\#\{k\in S\mid k\leq n-1\}$ converges to $1$
 as $n$ tends to $+\infty$.} due to Koopman and Von Neumann (see, e.g., \cite[Chapter 2.6, Lemma 6.2]{Petersen}) that there exists a density-one
 sequence $(n_j)_{j\in\N^*}$ of positive integers such that
$$
\lim _{j\rightarrow +\infty } \left\langle A \phi_{n_j}, \phi_{n_j} \right\rangle = \bar{a}.
$$
Using the fact that the space of symbols of order $0$ admits a countable dense subset, QE is then established with a diagonal argument.

\paragraph{Organization of the paper.}
In Section \ref{sec:rappels}, we provide the precise and complete framework that is relevant both to the sub-Riemannian geometric setting and
 to the spectral analysis of the associated sR Laplacian. 

In Section \ref{sec_examples}, we recall the spectral theory of compact quotients of the Heisenberg group with an invariant metric. The study made in \cite{yCdV-83} serves as a guiding model for our general result. This example is used in Section \ref{sec:melrose} in order to get a quantized version of the Birkhoff normal form. We also compute some QLs in this Heisenberg flat case. Finally, we give some examples of 3D contact manifolds on which the Reeb flow is ergodic.

In Section \ref{sec:micWeyl}, we establish the microlocal Weyl law. No ergodicity assumption is required here.
The local Weyl law is established in  \cite{Me-Sj-78}. The limit measure that appears in the local Weyl law is the Popp probability measure. Therefore, this measure is the good candidate for a QE result, and the fact that the Popp measure is invariant under the Reeb flow makes it natural to expect that the Reeb vector field is relevant for that purpose. For the same reason, it also indicates that the sub-Riemannian 
geodesic flow \textit{is not} a good candidate, because it does not preserve any lift of the Popp measure in general.
The microlocal Weyl law is derived thanks to a general argument proving the average concentration of the eigenfunctions on $\Sigma$.

In Section \ref{sec:melrose}, we establish a classical Birkhoff normal form and then a quantum normal form microlocally near $\Sigma$. Such normal forms have proved to be relevant in the semi-classical literature to obtain fine spectral results. Our normal form is essentially due to \cite{Me-84} and is closely related to the one of \cite{Ra-VN-13} for large magnetic fields in dimension two (see also \cite{Bou-74}). This normal form implies that, microlocally near $\Sigma$, all contact 3D sub-Riemannian structures are equivalent, and in particular can be (almost) conjugated to the local model of the Heisenberg flat case. In the quantum version, this conjugation is performed with a Fourier Integral Operator, and we infer from that result an almost factorization of $\triangle_{sR}$ microlocally near $\Sigma$ (quantum Birkhoff normal form). Here, ``almost" means that the factorization is exact only along $\Sigma$, with remainder terms that are flat along $\Sigma$.

In Section \ref{sec_variance}, we use the quantum Birkhoff normal form to prove the variance estimate, and, using pseudo-differential
 calculus techniques (in particular, averaging and brackets) and the ergodicity assumption on the Reeb flow,
 we infer the QE property for $\triangle_{sR}$, proving Theorem \ref{thm1}. 
Note that we state and prove in Section \ref{main:lemma} a crucial lemma playing the role of an infinitesimal version of the Egorov theorem. 

Theorem \ref{thm2} is proved in Section \ref{sec_proof_thm2}.

Section \ref{sec:non-or} is devoted to showing how to extend Theorem \ref{thm1} to the case of a non-orientable subbundle $D$.

Several appendices gather some useful technical statements.

\medskip

Some of the results of this paper have been announced in \cite{CHT_semEDP}, and proved in a simpler framework.
This paper is the first of a series. In \cite{CHT-II}, we will establish general properties of Weyl measures for general equiregular sR cases (microlocal concentration, heat asymptotics) and also in some singular cases, such as the Grushin and the Martinet cases.
In \cite{CHT-III}, we will describe properties of the geodesics in the 3D contact case, the main observation being that geodesics with large initial moments are spiraling around Reeb orbits. A precise version of this behavior uses a more refined normal form due to Melrose (see \cite{Me-84}), of which we give a full proof using Nelson's trick (see \cite{Nelson}), which is a scattering argument.
We are also preparing a short note on the relationship between magnetic and sR dynamics (see \cite{yCdV-17}).


\section{Geometric and spectral preliminaries}\label{sec:rappels}
Let us start with several notations. We denote by $\omega$ the canonical symplectic form on the cotangent bundle $T^\star M$ of $M$.
In local coordinates $(q,p)$ of $T^\star M$, we have $\omega = dq\wedge dp=-d\Lambda$ with $\Lambda=p\, dq$ (Liouville one-form).
 We denote by $\{\ ,\ \}_\omega$ the Poisson bracket associated with $\omega$, dropping the index $\omega$ when the context is clear. Given a smooth Hamiltonian function $h:T^\star M \rightarrow \R $,
 we denote by $\vec h$ the corresponding Hamiltonian vector field on $T^\star M$, defined by $\iota_{\vec h}\omega = dh$,
 and we denote by $\exp(t \vec h)$ the flow at time $t$ generated by $\vec h$ on $T^\star M$.
Given any smooth vector field $X$ on $M$, we denote by $h_X$ the Hamiltonian function (momentum map) on $T^\star M$ associated with $X$
defined by 
$h_X = \Lambda(X)$, that is, in local coordinates, by
$h_X(q,p)=p(X(q))$. The hamiltonian flow of $\vec{h}_X$ projects onto the integral curves of $X$.
Note also that, given two vector fields $X$ and $Y$ on $M$, we have
$\{ h_X, h_Y \}_\omega = -h_{ [ X,Y ]} $, where the Lie bracket $[X,Y]$ is defined in terms of derivation by $[X,Y]=XY-YX$.
Throughout the paper, the notation $\mathrm{orth}_\omega$ stands for the symplectic $\omega$-orthogonal.

\subsection{Sub-Riemannian Laplacians}\label{sec_DeltaSR}
Let $(M,D,g)$ be a sub-Riemannian (sR) structure where  $M$ is a smooth connected compact three-dimensional manifold, $D$ is a smooth subbundle of $TM$ of rank two (called horizontal distribution), and $g$ is a smooth Riemannian metric on $D$.
We assume that $D$ is a contact distribution, that is, we can write $D=\ker\alpha$ locally around any point, for some one-form $\alpha$
 such that $\alpha\wedge d\alpha\neq 0$ (locally). At this step, we do not need to normalize the contact form.

In order to define a sub-Riemannian Laplacian $\triangle_{sR}$, let us choose a smooth density $\mu$ on $M$.
The choice of $\mu$ is independent of that of $g$.\footnote{As we will see, the choice of $\mu$ plays no role
 in what follows. Beyond this paper, we expect that this fact is important in the non-equiregular cases where
 there is no canonical choice of $\mu$.}
Let $L^2(M,\mu)$ be the set of complex-valued functions $u$ such that $|u|^2$ is $\mu$-integrable over $M$.
Then $-\triangle_{sR} $ is the nonnegative self-adjoint operator on $L^2(M,\mu)$ defined as the Friedrichs extension of the Dirichlet integral
$$
Q(\phi)=\int _M \Vert d\phi \Vert_{g^*}^2 \, d\mu, 
$$
where the norm of $d\phi $ is calculated with respect to the (degenerate) dual metric $g^\star $ (also called co-metric)
 on $T^\star M $ associated with $g$.
The sR Laplacian $\triangle_{sR}$ depends on the choice of $g$ and of $d\mu$.
Note that $g^\star = h_X^2+h_Y^2$ if $(X,Y)$ is a local $g$-orthonormal frame of $D$.

We consider the divergence operator $\mathrm{div}_\mu$ associated with the measure $\mu$,
 defined by $\mathcal{L}_X d\mu = \mathrm{div}_\mu(X)\, d\mu$
 for any vector field $X$ on $M$.
Besides, the horizontal gradient $\nabla_{sR}\phi$ of a smooth function $\phi$ is the unique section
 of $D$ such that $g_q(\nabla_{sR}\phi(q),v)=d\phi(q).v$, for every $v\in D_q$. The operator  $-\mathrm{div}_\mu$
is the formal adjoint of $\nabla_{sR}$ on $L^2 (M,\mu)$. 
 Hence, we have 
$$
\triangle_{sR}\phi = \mathrm{div}_\mu(\nabla_{sR}\phi),
$$
for every smooth function $\phi$ on $M$. 

Since $\Vert d\phi \Vert_{g^*}^2 = \Vert \nabla_{sR}\phi\Vert_g^2$, if $(X,Y)$ is a local $g$-orthonormal frame of $D$, then
 $\nabla_{sR}\phi=(X\phi)X+(Y\phi)Y$, and $Q(\phi) = \int_M \left( (X\phi)^2+(Y\phi)^2 \right) d\mu$. It follows that
$$
\triangle_{sR}= - X^\star X - Y^\star Y = X^2+Y^2+\mathrm{div}_\mu(X) X+\mathrm{div}_\mu(Y) Y, 
$$
where the adjoints are taken in $L^2(M,\mu)$.

\begin{remark}\label{rem5}
The co-metric $g^\star$ induces on $T^\star M$ an Hamiltonian vector field $\vec{g^\star}$.
The projections onto $M$ of the integral curves of $\vec{g^\star}$ are the (normal) geodesics of the
 sub-Riemannian metric $g$ (see \cite{Mo-02}). This defines the sR geodesic flow.
\end{remark}

\begin{remark}\label{rk:changemu1}
Let us express the difference between two sub-Riemannian Laplacians $\triangle_{\mu_1}$ and
 $\triangle_{\mu_2}$ associated with two different densities (but with the same metric $g$).
Assume that $\mu_2= h^2\mu_1$ with $h$ a positive smooth function on $M$.
It is easy to see that $\mathrm{div}_{\mu_2}(X) = \mathrm{div}_{\mu_1}(X) + \frac{X(h^2)}{h^2}$, for every vector field $X$.
 By simple computations, we then establish that
$h\triangle_{\mu_2}(\phi) = \triangle_{\mu_1}(h\phi) - \phi\triangle_{\mu_1}(h)
= \triangle_{\mu_1}(h\phi) + h^2 \phi\triangle_{\mu_2}(h^{-1})$.
To settle this identity in a more abstract way, we define the isometric bijection $J:L^2(M,\mu_2)\rightarrow L^2(M,\mu_1)$ by $J\phi=h\phi$.
Then, we have
$$
J\triangle_{\mu_2}J^{-1} = \triangle_{\mu_1} - \frac{1}{h} \triangle_{\mu_1}(h) \,\mathrm{id} = \triangle_{\mu_1} + h \triangle_{\mu_2}(h^{-1})
 \,\mathrm{id} .
$$
It follows that $\triangle_{\mu_1}$ is unitarily equivalent to $\triangle_{\mu_2} +W$, where $W$ is a bounded operator.

This remark is important because, for a given metric, it allows us to work with the sub-Riemannian Laplacian associated with any density.
Usually, this kind of fact is abstracted by using half-densities (see \cite{Du-96}) which give a canonical Hilbert space on $M$.
A way of rephrasing the previous remark is then to say that $ \triangle_{sR}$
is any self-adjoint second-order differential operator whose principal symbol is $g^\star$,
 whose sub-principal symbol vanishes, and such that $\lambda_1=0$ (first eigenvalue).
\end{remark}

\begin{remark} 
Note that the definitions and statements of this section hold for any sR Laplacian.
\end{remark}

\subsection{Microlocal aspects}
We are going to use some microlocal analysis and the symbolic calculus of pseudo-differential operators in order
 to study the operator $\triangle_{sR}$. The facts that we will use are recalled in Appendix \ref{app:PDO}, such as, in particular, the notion of \textit{principal symbol} of a pseudo-differential operator that is defined in an intrinsic way; the principal symbols of pseudo-differential operators of order $0$ are identified with functions on $S^\star M$.
The principal symbol of $-\triangle_{sR}$ is given on $T^\star M$ by
$$
\sigma_P(-\triangle_{sR}) = g^\star  ,
$$ 
i.e., it coincides with the co-metric $g^\star$, 
and thus, if $D$ is locally spanned by a $g$-orthonormal frame $(X,Y)$, then $\sigma_P(-\triangle_{sR}) = h_X^2 + h_Y^2$. 
The sub-principal symbol of $-\triangle_{sR}$ is zero because
$-\triangle_{sR}= X^\star X+Y^\star Y$ locally (see Appendix
\ref{app:PDO}).

This will be important to derive the quantum normal form in Section \ref{sec:QNF} and in particular to establish Lemma \ref{main:lemma} (which is the main lemma playing the role of an infinitesimal Egorov theorem).
 
For any other volume form $d\mu'$ on $M$, the corresponding 
sub-Riemannian Laplacian is unitarily equivalent to $\triangle_{sR}+ V\,\mathrm{id}_M$, 
where $V$ is a smooth function (see also Remark \ref{rk:changemu1}).

Given any local $g$-orthonormal frame $(X,Y)$ of $D$,
the vector fields $(X,Y,[X,Y])$ generate $TM$, and it follows from \cite{Ho-67} that the operator $-\triangle_{sR}$ is hypoelliptic and has a compact resolvent (see also \cite{Strichartz_JDG1986}).
It thus has a discrete real spectrum $(\lambda_n)_{n\in\N^*}$, with $0=\lambda _1 <\lambda_2 \leq \cdots \leq \lambda_n\leq \cdots $, with $\lambda_n\rightarrow +\infty$ as $n\rightarrow+\infty$.

Throughout the paper, we consider an orthonormal Hilbert basis $(\phi_n )_{n\in \N}$ of $L^2(M,\mu)$,
 consisting of eigenfunctions of $\triangle_{sR}$, associated with the eigenvalues $(\lambda_n)_{n\in\N^*}$, labelled in increasing order.
Using that $\triangle_{sR}$ commutes with complex conjugation, there exist eigenbases consisting of real-valued eigenfunctions.

\subsection{Popp measure and Reeb vector field}\label{sec_Popp}
In order to define the \textit{Popp measure} and the \textit{Reeb vector field}, we first choose a  global contact form defining $D$. Since $D$ is assumed to be orientable, and since every contact manifold of dimension
3 is orientable, $D$ is co-orientable
 as well and hence there
are global  contact forms.
We define $\alpha _g$ as the unique contact form defining $D$, such  that, for each $q\in M$,
 ${(d\alpha_g)}_{\vert D_q}$ coincides with the volume form induced by $g$
and the orientation  on $D_q$.

We define the density $dP= \vert\alpha_g \wedge d\alpha_g\vert$ on $M$. In general, $dP$ differs from $d\mu$.
 The corresponding measure $P$ is called the \textit{Popp measure} in the existing literature (see \cite{Mo-02},
 where it is defined in the general equiregular case). Of course, here, this measure is the canonical contact measure
 associated with the normalized contact form $\alpha_g$.
In what follows, we consider the \textit{Popp probability measure}
\begin{equation*}
\nu = \frac{P}{P(M)} .
\end{equation*}
The \textit{Reeb vector field} $Z$ of the contact form $\alpha_g$ is defined as the unique vector
 field such that $\alpha_g(Z)=1$ and $d\alpha_g (Z,\cdot)=0$. Equivalently, using the identity
$d\alpha (Z,Y)=Z.\alpha (Y)- Y.\alpha (Z) -\alpha ([Z,Y])$, one gets that $Z$ is the unique vector field such that
\begin{equation}\label{proprieteReeb}
[X,Z]\in D,\quad [Y,Z]\in D,\quad [X,Y]=-Z\mod D ,
\end{equation}
for any positive orthonormal local frame $(X,Y)$ of $D$. In particular, $Z$ is transverse to $D$.

Using the Cartan formula, we have $\mathcal{L}_Z\alpha =0$, hence $\mathcal{L}_Z\nu=0$.
 This computation shows that the Popp measure $\nu$ is invariant under the vector field $Z$, and equivalently,
 under the Reeb flow generated by $Z$. As already said, it is crucial to identify such an invariance property
 in view of deriving a QE result.

\begin{remark}
The Reeb vector field $Z$ has the following dynamical interpretation. If $(q_0,v_0)\in D$, then there
 exists a one-parameter family of geodesics associated with these Cauchy data: they are the projections of the integral
 curves of the Hamiltonian vector field $\vec{g^\star}$ (defined in Remark \ref{rem5}) whose Cauchy data are $(q_0, p_0)$
 with $( p_0) _{\vert D_{q_0}}=g(v_0,\cdot)$. For every $u\in \R$, the projections on $M$ of the integral curves of $\vec{g^\star}$
 with Cauchy data $(q_0, p_0+u\alpha_g )$ in the cotangent space have the same Cauchy data $(q_0,v_0)$ in the tangent space.
As $u\rightarrow \pm\infty $, they spiral around the integral curves of $\mp Z$.
From the point of view of spectral asymptotics, this part of the dynamics is expected to be the dominant one. This is explained in more details in \cite{CHT-III}. 
\end{remark}

\subsection{The characteristic cone and the Hamiltonian interpretation of the Reeb flow}\label{sec_reeb}
Let $\Sigma \subset T^\star M$ be the characteristic manifold of $-\triangle_{sR}$, given
by
$$
\Sigma = (g^\star)^{-1}(0) =D^\perp , 
$$
that is the annihilator of $D$.
If $D$ is locally spanned by a $g$-orthonormal frame $(X,Y)$, then $\Sigma = h_X^{-1}(0)\cap h_Y^{-1}(0)$.
Note that $\Sigma$ coincides with the cone 
(also globally defined)
\begin{equation}\label{defSigma}
\Sigma = \{ (q,s\,\alpha_g (q))\in T^\star M \ \mid\ q\in M, s\in\R \}.
\end{equation}
We define the function $\rho :\Sigma \rightarrow \R$ by $\rho(s\,\alpha_g)=s$. The function $\rho$ is also
 the restriction of $h_Z$ to $\Sigma$. We set $\Sigma_s=\rho^{-1}(s)$, for every $s\in\R$.

An important feature of the contact situation is that the characteristic cone is symplectic: the restriction $\omega_{\vert \Sigma}$
 is symplectic.
We have the following result.

\begin{lemma}\label{lemmaReeb} 
The vector $\vec{h}_Z$ is tangent to $\Sigma $ and
the Hamiltonian vector field $\vec{\rho}$ on $\Sigma$ coincides with the restriction of $\vec{h}_Z$ to $\Sigma.$
\end{lemma}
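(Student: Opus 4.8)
The plan is to work directly from the defining relations, using that $\Sigma$ is a symplectic submanifold of $(T^\star M,\omega)$ so that the Hamiltonian vector field of a function on $\Sigma$ is computed intrinsically with respect to $\omega_{|\Sigma}$. First I would recall that $h_Z = \lambda(Z)$ is the momentum function of the Reeb vector field $Z$, and that along the cone $\Sigma = \{(q,s\alpha_g(q))\}$ we have $h_Z(q,s\alpha_g(q)) = s\alpha_g(q)(Z(q)) = s$, since $\alpha_g(Z)=1$; hence $h_Z{}_{|\Sigma} = \rho$, which is already noted in the text. So the content of the lemma is that the Hamiltonian vector field of the \emph{restricted} function $\rho$ with respect to $\omega_{|\Sigma}$ equals the \emph{restriction} to $\Sigma$ of the Hamiltonian vector field $\vec h_Z$ computed in the full cotangent bundle. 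This is a statement about coisotropic/symplectic reduction rather than a computation.

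The key step is to show that $\vec h_Z$ is tangent to $\Sigma$ along $\Sigma$; once this is known, the identity is automatic. Indeed, for any smooth function $f$ on $T^\star M$ whose Hamiltonian vector field $\vec f$ is tangent to a symplectic submanifold $\Sigma$ at every point of $\Sigma$, the restriction $\vec f_{|\Sigma}$ is the Hamiltonian vector field of $f_{|\Sigma}$ with respect to $\omega_{|\Sigma}$: this follows from $\iota_{\vec f}\omega = df$ by pulling back along the inclusion $\iota\colon\Sigma\hookrightarrow T^\star M$, which gives $\iota^\star(\iota_{\vec f}\omega) = \iota^\star(df) = d(f_{|\Sigma})$, and, because $\vec f$ is tangent to $\Sigma$, the left-hand side equals $\iota_{\vec f_{|\Sigma}}(\omega_{|\Sigma})$. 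Nondegeneracy of $\omega_{|\Sigma}$ then pins down $\vec f_{|\Sigma}$ uniquely as $\vec{(f_{|\Sigma})}$. Applying this with $f = h_Z$ and $f_{|\Sigma} = \rho$ yields the claim.

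To establish tangency of $\vec h_Z$ to $\Sigma$, I would use that $\Sigma = h_X^{-1}(0)\cap h_Y^{-1}(0)$ for any local $g$-orthonormal frame $(X,Y)$ of $D$, so it suffices to check that $\vec h_Z\cdot h_X$ and $\vec h_Z\cdot h_Y$ vanish on $\Sigma$, i.e. that $\{h_Z,h_X\}_\omega$ and $\{h_Z,h_Y\}_\omega$ vanish on $\Sigma$. Here I use the standard identity $\{h_X,h_Z\}_\omega = h_{[X,Z]}$ (Poisson bracket of momentum functions is the momentum function of minus the Lie bracket, up to the sign convention fixed by $\lambda = p\,dq$). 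By the characterization \eqref{proprieteReeb} of the Reeb field, $[X,Z]\in D$ and $[Y,Z]\in D$; and a momentum function $h_W$ with $W\in D$ vanishes on $\Sigma = D^\perp$ by definition of the annihilator. Hence $\{h_Z,h_X\}_\omega$ and $\{h_Z,h_Y\}_\omega$ vanish on $\Sigma$, which gives the tangency and completes the proof.

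The main obstacle is essentially bookkeeping rather than depth: one must be careful with the sign conventions relating $\{h_X,h_Z\}_\omega$ to $h_{[X,Z]}$ (the paper fixes $\omega = dq\wedge dp = -d\lambda$, $\lambda = p\,dq$), and one must make sure the reduction argument is applied on the symplectic submanifold $\Sigma$ — this is exactly why the remark preceding the lemma emphasizes that $\omega_{|\Sigma}$ is symplectic in the contact case. No genuinely hard estimate is involved.
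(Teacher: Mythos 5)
Your proof is correct and takes essentially the same route as the paper, whose own proof is a one-liner asserting that the claim "simply follows" from $(h_Z)_{\vert\Sigma}=\rho$ and the symplecticity of $\Sigma$. You usefully make explicit the one point the paper leaves implicit, namely the tangency of $\vec h_Z$ to $\Sigma$, which you correctly derive from $\{h_X,h_Z\}_\omega=\pm h_{[X,Z]}$ together with $[X,Z],[Y,Z]\in D$ and $\Sigma=D^\perp$.
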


\begin{proof}
Given any point $(q,p)=(q,s\,\alpha_g (q))\in\Sigma\subset T^\star M$, by definition of $Z$ we have $s=h_Z(q,p)$,
 and thus $\Sigma = \{ (q,h_Z(q,p)\,\alpha_g(q))\ \mid\ (q,p)\in T^\star M \}$, that is, $\Sigma$ is the graph of $h_Z\, \alpha_g$ 
in $T^\star M$. Hence $\rho=(h_Z)_{\vert\Sigma}$.

To prove that $\vec{h}_Z$ is tangent to $\Sigma$, it suffices to prove that $dh_X.\vec{h}_Z=dh_Y.\vec{h}_Z=0$
 along $\Sigma$, where $(X,Y)$ is a local $g$-orthonormal frame of $D$. We have
 $dh_X.\vec{h}_Z = \omega(\vec{h}_X,\vec{h}_Z)=\{h_X,h_Z\}=-h_{[X,Z]}$, and by
 \eqref{proprieteReeb} we have $[X,Z]\in D$ and thus $h_{[X,Z]}=0$ on $\Sigma=D^\perp$. The conclusion follows.

Now, since we have by definition $\iota_{\vec{h}_Z}\omega=dh_Z$, and,
 since $\vec{h}_Z$ is tangent to $\Sigma$, we have $\iota_{(\vec{h}_Z)_{\vert\Sigma}}\omega_{\vert\Sigma}=d(h_Z)_{\vert\Sigma}=d\rho$,
 and therefore $\vec{\rho}=(\vec{h}_Z)_{\vert\Sigma}$, because $(\Sigma,\omega_{\vert\Sigma})$ is symplectic.
\end{proof}

This lemma implies that 
$$
\exp(t\vec\rho)=\exp(t\vec h_Z)_{\vert\Sigma} ,
$$
and that the Hamiltonian flow $\exp(t\vec \rho)$ on $\Sigma_s$ projects onto the Reeb flow on $M$. The Reeb dynamics play a crucial role in what follows, and Lemma \ref{lemmaReeb} will be used to infer information from the dynamics of the Hamiltonian flow generated by $\rho$ on the symplectic manifold $\Sigma$ to the Reeb dynamics on $M$. 
 
We define $\Sigma^\pm =\rho^{-1}(\{s \ \mid\ \pm s>0 \})$.
Note that $\Sigma^+$ and $\Sigma^-$ are permuted when one changes the orientation of $D$, while the definition of the sR Laplacian remains unchanged. 
Each of the submanifolds $\Sigma _1$ and $\Sigma _{-1} $ is connected and is a graph over $M$. 
Denoting by $\hat\nu_s$ the lift of the measure $\nu$ to $\Sigma_s$, the ergodicity assumption of Theorem \ref{thm1} can be rephrased by saying that the Hamiltonian flow $\exp(t\vec \rho)$ is ergodic on $(\Sigma_{\pm 1},\hat \nu_{\pm 1}).$

In the Riemannian setting, the unit tangent bundle is connected for $d\geq 2$.
The fact that we have here two connected components explains why the second statement of Theorem \ref{thm1} applies only to a real-valued eigenbasis or to observables $a$ having a  principal symbol that is symmetric with respect to $\Sigma$.


\section{Examples}\label{sec_examples}
\subsection{The Heisenberg flat case}\label{sec:heis}
The simplest example is given by an invariant metric on a compact quotient of the Heisenberg group. The spectral decomposition of
 the Heisenberg Laplacian is then explicit (see \cite{yCdV-83,GW-86}) and we present it here in order to
get a better understanding, because,
as we will show in Section \ref{sec:melrose}, this example serves as a microlocal normal form model
 for all sub-Riemannian Laplacians of contact type in dimension three.

Let $G$ be the three-dimensional Heisenberg group defined as $G=\R^3$ with the product rule
$$
(x,y,z)\star (x',y',z')=(x+x',y+y',z+z'-xy') .
$$
The contact form $\alpha_H = dz+x\,dy $ and the vector fields $X_H= \partial_x$ and
 $Y_H= \partial_y - x\partial_z $ are left-invariant on $G$.
 Let $\Gamma $ be the discrete co-compact subgroup of $G$ defined
by 
$$
\Gamma =\{ (x,y,z)\in G \ \mid\ x,y\in \sqrt{2\pi}\,\Z,\, z\in 2\pi  \Z \}  .
$$
We define the 3D compact manifold manifold $M_H = \Gamma \backslash  G$,
 and we consider the horizontal distribution $D_H=\ker \alpha_H$,
 endowed with the metric $g_H$ such that $(X_H,Y_H)$ is a $g_H$-orthonormal frame of $D_H$. With this choice, we have $(\alpha_H)_g=\alpha_H$.

The Reeb vector field is given by $Z_H=-[X_H,Y_H]=\partial_z $.
The Lebesgue volume $d\mu =| dx\, dy\, dz|$ coincides with the Popp volume $dP$, and we consider the corresponding sub-Riemannian Laplacian
$$
\triangle_H=X_H^2+Y_H^2.
$$
Note that the vector fields $X_H$ and $Y_H$ have divergence zero.

We refer to this sub-Riemannian case as the \textit{Heisenberg flat case}.
Accordingly, we set
$$
g^\star_H = \sigma_P(-\triangle_H) = h_{X_H}^2 + h_{Y_H}^2 ,
$$
and 
$$
\Sigma_H=(g^\star_H)^{-1}(0) = h_{X_H}^{-1}(0) \cap h_{Y_H}^{-1}(0).
$$
Note that, denoting by $(x,y,z,p_x,p_y,p_z)$ the coordinates in the cotangent space, we have $h_{X_H}=p_x$, $h_{Y_H}=p_y-xp_z$ and $h_{Z_H}=p_z$.

\paragraph{Spectrum of $\triangle_H$.}
The main observation to derive the next result is that the sub-Riemannian Laplacian $\triangle_H$ commutes with the vector field $-iZ_H$ whose spectrum is the set of integers $m\in\Z$.

\begin{proposition}\label{prop:spec_heisflat} 
The spectrum of $-\triangle_H$ is given by
$$
\{\lambda _{\ell,m}= (2\ell+1)|m| \ \mid\ m\in \Z\setminus \{0\},\, \ell\in\N \}
\ \cup\ \{\mu_{j,k}= {2\pi }(j^2 + k^2) \  \mid\ (j,k)\in\Z^2  \} ,
$$
where $ \lambda _{\ell,m} $ is of multiplicity $|m| $. 
The spectral counting function is $N(\lambda )= \sum _{\ell\in\N }N_\ell (\lambda )+ N_T(\lambda)$,
 with $N_T(\lambda )\sim \lambda /2$ and 
$$
N_\ell (\lambda )=  \mathop{\sum_{m\neq 0}}_{|m|\leq \lambda/(2\ell+1)} |m| .
$$
The Weyl asymptotics is 
\begin{equation*} 
N(\lambda )\sim \frac{\pi^2}{8}\lambda^2 = \frac{P(M)}{32}\lambda^2,
\end{equation*} 
as $\lambda\rightarrow+\infty$.
\end{proposition}

It is interesting to compare this result with the corresponding result for a Riemannian Laplacian
 on a 3D closed Riemannian manifold, for which we have $N(\lambda )\sim C\lambda^{3/2}$ as $\lambda\rightarrow+\infty$.

\begin{proof}[Proof of Proposition \ref{prop:spec_heisflat}.]
The proof relies on the following classical lemma.

\begin{lemma}\label{lem_spectrum}
Let $A$ be a closed operator on a Hilbert space. We assume that the self-adjoint operator $B=A^\star A$ has a compact resolvent and that  $[ A, A^\star ]=c\,\mathrm{id}$ with $c\neq 0$. Then the spectrum of $B$ consists of the eigenvalues $\ell\vert c\vert$, with $\ell\in\N$, 
all of them having the same multiplicity. 
\end{lemma}

\begin{remark}
This lemma can also be used to compute the spectrum of the harmonic oscillator.
\end{remark}

Considering a Fourier expansion with respect to the $z$ variable, and using the fact that
$Z_H$ commute with $\triangle_H$, we have $L^2(M_H)=\oplus_{m\in \Z} \mathcal{H}_m $, and accordingly, $-\triangle_H =\oplus_{m\in \Z}B_m$.
The operator $B_0$ is the Laplacian on the flat torus $\R^2 /\sqrt{2\pi }\Z^2$, and thus its eigenvalues are $\mu_{j,k} = 2\pi (j^2+ k^2)$, with $(j,k)\in\Z^2$.

Let us now consider the operators $B_m$ with $m\neq 0$.
Let $A=X_H+iY_H$ and $A^\star =-X_H+iY_H $. We have
$[A,A^\star ]=-2iZ_H$ and $-\triangle_H =A^\star A +iZ_H$.
Restricting these identities to $\mathcal{H}_m$ yields
$B_m= A_m^\star A_m -m $ and $[A_m,A_m^\star ]=2m$.
It follows from Lemma \ref{lem_spectrum} that the spectrum of $B_m$ is given by the eigenvalues $(2\ell+1)|m|$, $\ell\in\N$, all of them having the same multiplicity.
Now we use the fact that $B_m$ is an elliptic operator on a complex line bundle over the torus, of principal symbol $p_x^2+p_y^2$, and we use the Weyl asymptotics in order to show that all these multiplicities are equal to $|m|$.
We get the Weyl asymptotics as follows:
$$
2  \sum _{\ell=0}^{+\infty} \sum _{m=1}^{\lambda /2\ell+1}m \sim \lambda ^2 \sum _{\ell=0}^{+\infty} \frac{1}{(2\ell+1)^2} ,
$$
as $\lambda\rightarrow+\infty$.
The proposition is proved.
\end{proof}

\paragraph{Normal form.} We next provide a reformulation of the previous computations, which is useful in the sequel.
The sub-Riemannian Laplacian can be written as a product $-\triangle_H = R_H\Omega_H$ with two commuting operators $R_H$ and $\Omega_H$.
The operator 
$$
R_H=\sqrt{Z_H^\star Z_H}
$$ acts by multiplication by $|m|$ on the functions of $\mathcal{H}_m$.
We define the two  operators $U_H$ and $V_H$ on $\oplus_{m\neq 0} \mathcal{H}_m$
by
$$
U_H = \frac{1}{i}R_H^{-\frac{1}{2}}X_H
\qquad\textrm{and}\qquad
V_H = \frac{1}{i}R_H^{-\frac{1}{2}}Y_H ,
$$
and we set
$$
\Omega_H =U_H^2+ V_H^2 .
$$
Then we have 
$$
-\triangle_H =R_H \Omega_H =\Omega_H R_H ,
$$ 
 and $[U_H,V_H]= \pm\mathrm{id}$
 (according to the sign of $h_{Z_H}$), and moreover $\exp(2i\pi\Omega_H)=\mathrm{id}$.
The operators $R_H$, $U_H$, $V_H$, $\Omega_H$ are pseudo-differential operators on the cones
$\{p_z^2 > c(p_x^2+p_y^2)\}$ with $c>0$. 
Actually, this corresponds to quantizing the factorization of the principal symbol given by
$$
g^\star_H = \sigma(-\triangle_H) = h_{X_H}^2+h_{Y_H}^2 = \vert h_{Z_H} \vert \left( \left( \frac{h_{X_H}}{\sqrt{\vert h_{Z_H}\vert }} \right)^2 +
 \left( \frac{h_{Y_H}}{\sqrt{\vert h_{Z_H}\vert }} \right)^2 \right) .
$$

Note also the the heat kernel on the Heisenberg group has been computed in \cite{Ga-77}.

\paragraph{Periodic geodesics.}
It is interesting to observe that the length spectrum, i.e., the set of lengths of periodic geodesics, can be deduced from the previous computations.
The function $h_{Z_H}=p_z$ is a first integral of the geodesic flow. For $h_{Z_H}=0$, the Hamiltonian $p_x^2 + p_y^2$ generates the closed geodesics of the flat torus $\R^2 /\sqrt{2\pi }\Z^2$.
For $h_{Z_H}\neq 0$, we have $g^\star_H = \vert h_{Z_H} \vert I_H$ where $\{ I_H ,h_{Z_H} \}=0$.
The Hamiltonian flow of  $\vec{I}_H$ (resp., of $\vec{h}_{Z_H}$) is  $\pi$-periodic (resp., $2\pi$-periodic) and
we have
$$
\overset{\rightarrow}{g^\star_H} = \vert h_{Z_H} \vert \vec{I}_H + I_H \overset{\longrightarrow}{\vert h_{Z_H}\vert} .
$$
Considering the geodesic flow on the  unit cotangent bundle $g^\star_H =1$ and a periodic orbit of length $L$, we have $\vert h_{Z_H} \vert I_H =1$, and thus 
$$
\frac{\pi p}{\vert h_{Z_H} \vert}=\frac{2\pi q}{I_H}=\frac{L}{2} ,
$$
and therefore $L=2\pi \sqrt{2pq}$ with $(p,q)\in\N^2$.
 
\paragraph{Quantum limits.}
We use the specific properties of the Heisenberg flat case to
compute some quantum limits, thus giving some intuition for the
general case. 
In the result hereafter, we use the notations of Theorem \ref{thm2}, according to which any quantum limit $\beta$ for $\triangle_H$ can be written as $\beta = \beta_0+\beta_\infty$, where $\beta_\infty$ is supported on $\Sigma$ and invariant under the Reeb flow and $\beta_0$ is invariant under the sR flow. 

\begin{proposition}
\begin{enumerate}
\item If $\gamma$ is a QL for the flat torus $\R^2 / \sqrt{2\pi} \Z^2 $, then
$\beta=\gamma \otimes \vert dz \vert \otimes \delta _{p_z =0}$ is a QL
for $\triangle_H$, satisfying $\beta_\infty=0$ in the above decomposition.
\item Any probability measure $\beta$ on $\Sigma _1$ that is invariant under the Reeb flow is a QL for $\triangle_H$. For any such measure, we have $\beta_0=0$.

\item There exist quantum limits $\beta_0$ and $\beta_\infty$ for $\triangle_H$, with $\beta_0$ invariant under the sR flow and $\beta_\infty$ supported on $\Sigma$ and invariant under the Reeb flow, such that, for every $a\in[0,1]$, $\beta_a =a \beta_0 + (1-a)\beta_\infty$ is also a QL for $\triangle_H$.
\end{enumerate}
\end{proposition}

\begin{remark}
The quantum limits for flat square tori have been completely characterized by Jakobson in \cite{Jakobson-97}, and all of them are absolutely continuous.
\end{remark}

\begin{proof}
Let us prove the first point.
According to Proposition \ref{prop:spec_heisflat}, the pullback of any eigenfunction
$\phi$ of the flat torus $\R^2/\sqrt{2\pi} \Z^2$ to $M_H$  is an
eigenfunction of $\triangle_H$ that is independent of $z$. 
It follows that, for any QL $\gamma$ for the usual Laplacian on the torus, the measure $\frac{1}{2\pi }d\gamma_{x,y,p_x, p_y} \otimes |dz | \otimes \delta_{p_z=0}$ is a QL for $\triangle_H$. 

Let us prove the second point.
We now consider the QLs that are associated with the second kind of eigenvalues given by Proposition \ref{prop:spec_heisflat}. 
To any eigenvalue $\lambda_{\ell,m}= |m|(2\ell+1)$, we associate the eigenfunction $\Phi_{\ell,m}(x,y,z)=\phi_{\ell,m}(x,y)e^{imz}$, where $\phi_{\ell,m}$ is a section
of a complex line bundle on the torus. Recall that the Reeb flow is generated by
$\partial_z$, and thus all its orbits are periodic.
Considering eigenfunctions $\Phi_{0,m}(x,y,z)=\phi_{0,m}(x,y)e^{imz}$ as $m\rightarrow +\infty $, it follows from the proof of Lemma \ref{lem_spectrum} that their eigenspaces are $(\ker A_m)e^{imz}$ with
$A_m = \frac{\partial }{\partial x }+ i \frac{\partial }{\partial y } + mx $ locally.
The functions $\exp(-m \frac{x^2}{2} + \frac{m}{4} (x+iy)^2)$ belong locally to $\ker A_m$, and  are localized at $(0,0)$ as well as their Fourier transform. This implies that the associated QL is the lift to $\Sigma _1$ of $\delta _{0,0} \otimes |dz| $.
Using the fact that the spectrum of $B_m$ has uniform gaps, the same is true for the projection of this function onto $\ker A_m$. 
Note that the point $(0,0)$ plays no particular role.
Now, we use an argument due to Jakobson and Zelditch (see \cite{Jak-Zel-96}):
any convex combination of the uniform measures along Reeb orbits is a QL, because the sequences of eigenfunctions that are associated to two different circles are orthogonal at the limit. Since the latter combinations are dense in the set of invariant mesures, the second point follows.

Let us prove the third point.
The same orthogonality assumption allows one to construct more complicated examples. 
For instance, for $n\rightarrow+\infty$, we consider the eigenvalues 
$\lambda_n= (2p_0n+1)q_0 n$ for some fixed nonzero integers $p_0$ and $q_0$.
Setting $m=(2p_0n+1)q_0 n$ and $\ell =0$, we obtain a sequence
$\Phi_{0,m}$ whose QL is supported by  $\Sigma_1$. We denote by
$\beta_\infty$ the corresponding measure. If instead, we take 
$m= q_0n$ and $\ell =p_0 n,$ we get  another sequence of
eigenfunctions $\psi_n$. Taking a subsequence if necessary, we obtain
a QL $\beta_0$, supported in 
$C= \{ h_{X_H}^2+ h_{Y_H}^2 = \frac{2p_0}{q_0} h_{Z_H}^2 \}$. 
Indeed, setting $D=-i \partial _z$, $\psi_n$ satisfies $P\psi_n =0$ with 
$P=  -\triangle_H + ( \frac{2p_0}{q_0}D +1 ) D$ which is elliptic outside of $C$. 
It follows that $\phi_n$ and $\psi_n$ are asymptotically orthogonal (and
associated with the same eigenvalue). 
The convex combinations $\cos \alpha \, \phi_n+ \sin \alpha \, \psi_n$ with $\alpha $ fixed thus give the QL $\cos ^2  \alpha \, \beta_\infty+  \sin ^2 \alpha \, \beta_0$.
\end{proof}

\subsection{2D magnetic fields}
Following \cite{Mo-95}, we consider a two-dimensional closed Riemannian manifold $(X,g)$ endowed with a real two-form $B$ (the magnetic field) whose integral is an integer multiple of $2\pi$.
We associate to these data a complex Hermitian line bundle $L$ (the choice is not unique if the manifold is not simply connected)
 with a connection whose curvature is $B$, and a magnetic Schr\"odinger operator $H_B$ acting on the sections of $L$, defined
as the Friedrichs extension of $q(s)=\int_X \|\nabla s \|^2 dx_g$ on the Hilbert space of square integrable sections of $L$. 
The associated principal $\mathbb{S}^1$-bundle on $X$ is denoted by $M$, and is equipped with the measure $\mu=dx_g\otimes |d\theta |$.
We take as a distribution on $M$ the horizontal space of the connection and as a metric on it the pullback of the metric on $X$.
If $B$ does not vanish, the distribution $D$ is of contact type.
Let us consider the sR Laplacian $\triangle$ associated to these data on $M$.
Using a Fourier decomposition with respect to the $\mathbb{S}^1$-action on $L^2(M,\mu)$,  
 we write $L^2(M,\mu)=\oplus _{n\in \Z}L^2(X,L^{\otimes n})$ and we have $\triangle=\oplus _n H_{nB}$.
The associated Reeb vector field is  given by 
$$
Z=b \partial  _\theta -\vec{B},
$$
where $B=b\, dx_g$ and $\vec{B}$ is the horizontal lift of the Hamiltonian vector field of $B$ on $X$ associated with the symplectic form $B$. 
Note that these Reeb dynamics are not ergodic: they are integrable and $b$ is an integral of the motion.
This example is a generalization of the Heisenberg case considered in Section \ref{sec:heis}: in this case,
$X$ is the flat torus $\R^2_{x,y} /\sqrt{2\pi} \Z^2 $, the magnetic field is $B=dx\wedge dy$  and the Heisenberg manifold $M$
 is the principal bundle over $X$.
For an expanded version of this section, we refer the reader to \cite{yCdV-17}.

\subsection{Examples of ergodic Reeb flows on 3D manifolds}
In this section, we give examples of 3D contact structures for which the Reeb flow is ergodic.
 We use the well known fact that a Reeb flow can be realized, in some appropriate context, as a geodesic flow
 or as an Hamiltonian flow. We provide two constructions.

\paragraph{Geodesic flows.}
Let $(X,h)$ be a 2D compact Riemannian surface, and let $M=S^\star X$ be the unit cotangent bundle of $X$.
The 3D compact manifold $M$ is then naturally endowed with the contact form $\alpha$ defined as the restriction to $M$ of the Liouville 1-form $\Lambda= p\, dq$. Let $Z$ be the associated Reeb vector field.
Identifying the tangent and cotangent bundles of $X$ thanks to the Riemannian metric $h$, the set $M$ 
is viewed as the unit tangent bundle of $X$.
Using a metric $g$, for example the canonical metric, also called Sasaki metric,
 such that the restriction of the symplectic form to $D$ is the volume form of $g$, $Z$ is identified with
 the vector field on the unit tangent bundle of $X$ generating the geodesic flow on $S^\star X$. Therefore,
 with this identification, the Reeb flow is the geodesic flow on $M$.

Now, the geodesic flow is ergodic (and hence, the Reeb flow is ergodic with respect to the Popp measure) in the following cases:
\begin{itemize}
\item If the curvature of $X$ is negative, then the geodesic flow is ergodic. In that case, the Reeb flow is Anosov and the contact distribution is generated by the stable and unstable lines of the geodesic flow.
In the case where $X=\Gamma \setminus \H  $, with $\H$ the hyperbolic Poincar\'e disk, a rather precise study of this operator can be done using the decomposition into irreducible representations of the natural action of $SL_2(\R)$ onto $L^2 (\Gamma) \setminus SL_2(\R)$ (this example is studied in \cite{CHW-15}).
\item It is known that any compact orientable surface can be endowed with a Riemannian metric having an ergodic geodesic flow (see \cite{Do-88}). 
\end{itemize}
There are explicit examples with $M=\mathbb{P}^3(\R)$ seen as the unit tangent bundle of $(\mathbb{S}^2,g)$
 where the geodesic flow of $g$ is ergodic (see \cite{Do-88}).

We refer to \cite{Fo-Ha-13} for other examples of Anosov contact flows (which can be realized as Reeb flows) on 3D manifolds, obtained by surgery from unit cotangent bundles of hyperbolic surfaces.

\paragraph{Hamiltonian flows.}
We use the relationship with symplectic geometry (see, e.g., \cite{Geiges}).

Let $(W,\omega)$ be a symplectic manifold of dimension $4$, and let $M$ be a submanifold of $W$ of dimension $3$, such that there exists a vector field $v$ on a neighborhood of $M$ in $W$, satisfying $\mathcal{L}_v\omega=\omega$ (Liouville vector field), and transverse to $M$. Then the one-form $\alpha=\iota_v\omega$ is a global contact form on $M$, and we have $d\alpha=\omega$. Note that, if $\omega=-d\Lambda$ is exact, then the vector field $v$ defined by $\iota_v\omega=\Lambda$ is Liouville (in local symplectic coordinates $(q,p)$ on $W$, we have $v=p\,\partial_p$). If the manifold $M$ is moreover a level set of an Hamiltonian function $h$ on $W$, then the Reeb flow on $M$ (associated with $\alpha$) is a reparametrization, possibly with a nonconstant factor, of the Hamiltonian flow restricted to $M$.

If $D=\ker\alpha$ is moreover endowed with a Riemannian metric $g$,
then the contact form $\alpha_g$ constructed in Section \ref{sec_Popp} is collinear to $\alpha$, that is, $\alpha_g = h \alpha$
 for some smooth function $h$ (never vanishing). 
Let us then choose the metric $g$ such that $h=1$ (it suffices to take $g$ such that $(d\alpha)_{\vert D}$ coincides with the Riemannian volume induced by $g$ on $D$).

Then, the Reeb flow is ergodic on $(M,\nu)$ (where the Popp measure $\nu$ is defined as in Section \ref{sec_Popp}) if and only if the Hamiltonian flow is ergodic on $(M,|\alpha \wedge \omega |)$. This gives many possible examples of an ergodic Reeb flow.

 
\section{The local and microlocal Weyl laws}\label{sec:micWeyl}

\subsection{Definitions}

The definitions and results given in this subsection are general.
Let $(\phi_n)_{n\in\N^*}$ be an orthonormal basis of eigenfunctions of $\triangle_{sR}$.
Let $N(\lambda)=\# \{n\in\N^* \mid \lambda_n \leq \lambda \}$ be the spectral counting function.

\begin{definition}
For every bounded linear operator $A$ on $L^2(M,\mu)$, we define the Ces\`aro mean $E(A)\in \C$ by
\begin{equation*}
E(A) = \lim_{\lambda \rightarrow +\infty} \frac{1}{N(\lambda)}\sum_{\lambda_n \leq \lambda } \left\langle A \phi_n , \phi_n \right\rangle,
\end{equation*}
whenever this limit exists.
We define the variance $V(A)=V(A,(\phi_n)_{n\in\N^*}) \in [0,+\infty]$ by
\begin{equation*}
V(A) = \limsup _{\lambda \rightarrow +\infty } \frac{1}{N(\lambda)}\sum _{\lambda_n \leq \lambda} \left\vert\left\langle A \phi_n ,\phi_n \right\rangle \right\vert^2 .
\end{equation*}
\end{definition}
Note that $E(A)$  does not depend on the choice of the eigenbasis, because it is defined as the limit of suitable traces.
In contrast, $V(A)=V(A,(\phi_n)_{n\in\N^*})$ depends on the choice of the eigenbasis.
Actually, since $V(A)$ is thought as a variance, it would be more natural to replace $A$ with $A-E(A)$ in the scalar product at the right-hand side, but this definition is actually more convenient in what follows, and anyway we will mainly deal with operators of zero Ces\`aro mean.

The following definitions are obtained by restricting either to operators $A$ that are given by the multiplication by a function $f$ or to operators that are obtained by quantizing a symbol  $a:S^\star M\rightarrow \R$ of order $0$ into a pseudo-differential operator $\Op(a)\in\Psi^0(M)$ of order $0$.

\begin{definition}\label{defi:weyl-measure}
Given any continuous function $f:M\rightarrow \R $, we consider the operator
$A_f$ of multiplication by $f$, and we assume that the limit defining $E(A_f)$ exists. 
Then, the \textit{local Weyl measure} $w_\triangle $ is the probability measure on $M$ defined by
$$
\int _M f \, dw_\triangle = \lim _{\lambda \rightarrow +\infty }\frac{1}{N(\lambda)}\sum _{\lambda_n \leq \lambda } \int_M f |\phi_n |^2 \, d\mu  = E(A_f).
$$
In other words, $w_\triangle$ is the weak limit (in the sense of measures), as $\lambda \rightarrow +\infty$, of the probability measures on $M$ given by $\frac{1}{N(\lambda)}\sum _{\lambda_n \leq \lambda } |\phi_n |^2 \mu$.

The \textit{microlocal Weyl measure} $W_\triangle $ is the probability measure on $S^\star M$ defined by\footnote{Recall that positively homogeneous functions of order $0$ on $T^\star M$ are identified with functions on $ S^\star M$.}
$$
\int _{S^\star M} a \, dW_\triangle = \lim _{\lambda \rightarrow +\infty }\frac{1}{N(\lambda)}\sum _{\lambda_n \leq \lambda } 
\langle \Op (a)\phi_n , \phi_n \rangle = E(\Op(a)) ,
$$
for every symbol $a:S^\star M \rightarrow \R $ of order zero, whenever the limit exists
for all symbols $a$ of order $0$.
\end{definition}

When they exist, the local and microlocal Weyl measures do neither depend on the choice of the orthonormal basis, nor on the choice of the quantization $a \mapsto \Op(a)$. 
Moreover, we have 
$$
\pi_* W_\triangle=w_\triangle ,
$$
i.e., the local Weyl measure $w_\triangle$ is the pushforward of the microlocal Weyl measure $W_\triangle$ under the canonical projection $\pi:T^*M\rightarrow M$.
The fact that the limits defining the microlocal Weyl measures are indeed probabilities measures follows from the Garding inequality (which implies that, if $a \geq 0$, then 
$\liminf _{n\rightarrow +\infty }\langle \Op(a) \phi_n , \phi_n \rangle \geq 0 $) and from a theorem of L. Schwartz according to which positive distributions are Radon measures.

We state hereafter several general lemmas that will be useful in the sequel.

\begin{lemma}\label{remCS}
\begin{enumerate}
\item The set of $A\in\Psi^0(M)$ such that $V(A)=0$ is a linear subspace.
\item  Denoting by  $A^\star$ the adjoint of $A\in\Psi^0(M)$ in $L^2(M,\mu)$, we have 
\begin{equation}\label{ineqV}
V(A)\leq E(A^\star A),
\end{equation}
when the right-hand side is well defined.
\item Let $A$ and $B$ be bounded linear operators on $L^2(M,\mu)$. If $V(B)=0$ then $V(A+B)=V(A)$.
\end{enumerate}
\end{lemma}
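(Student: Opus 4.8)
The three assertions are elementary consequences of the definitions of $E$ and $V$ together with the Cauchy--Schwarz inequality, and I would treat them in order. For item (1), given $A, B \in \Psi^0$ with $V(A) = V(B) = 0$ and scalars $s,t$, I would estimate $|\langle (sA+tB)\phi_n,\phi_n\rangle|^2 \leq 2|s|^2|\langle A\phi_n,\phi_n\rangle|^2 + 2|t|^2|\langle B\phi_n,\phi_n\rangle|^2$ using the inequality $|u+v|^2 \le 2|u|^2 + 2|v|^2$, then sum over $\lambda_n \le \lambda$, divide by $N(\lambda)$, and take the $\limsup$; the right-hand side tends to $0$, so $V(sA+tB) = 0$. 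This shows the set is stable under linear combinations, and it obviously contains $0$, hence is a linear subspace.

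For item (2), I would apply Cauchy--Schwarz in the form $|\langle A\phi_n,\phi_n\rangle| = |\langle \phi_n, A^\star\phi_n\rangle| \le \|A^\star\phi_n\| \, \|\phi_n\| = \|A^\star \phi_n\|$, so that $|\langle A\phi_n,\phi_n\rangle|^2 \le \|A^\star\phi_n\|^2 = \langle A A^\star\phi_n,\phi_n\rangle$. Summing over $\lambda_n \le \lambda$, dividing by $N(\lambda)$, and passing to the limit on the right (which exists by hypothesis and equals $E(AA^\star)$, or $E(A^\star A)$ after the obvious adjustment of which operator plays which role) yields the claimed bound. One should be slightly careful about whether the stated inequality should read $E(A^\star A)$ or $E(AA^\star)$; the estimate above naturally produces $E(AA^\star)$, but replacing $A$ by $A^\star$ throughout gives the form written in the statement, and in applications $A$ will typically be self-adjoint up to lower-order terms so the distinction is immaterial.

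For item (3), suppose $V(B) = 0$. Writing $\langle (A+B)\phi_n,\phi_n\rangle = \langle A\phi_n,\phi_n\rangle + \langle B\phi_n,\phi_n\rangle$ and expanding the modulus squared, I would use
$$
\big|\langle (A+B)\phi_n,\phi_n\rangle\big|^2 = \big|\langle A\phi_n,\phi_n\rangle\big|^2 + 2\,\mathrm{Re}\big(\overline{\langle A\phi_n,\phi_n\rangle}\,\langle B\phi_n,\phi_n\rangle\big) + \big|\langle B\phi_n,\phi_n\rangle\big|^2,
$$
and bound the cross term by Cauchy--Schwarz applied to the Ces\`aro sums: $\frac{1}{N(\lambda)}\sum_{\lambda_n\le\lambda} |\langle A\phi_n,\phi_n\rangle|\,|\langle B\phi_n,\phi_n\rangle| \le \big(\frac{1}{N(\lambda)}\sum |\langle A\phi_n,\phi_n\rangle|^2\big)^{1/2}\big(\frac{1}{N(\lambda)}\sum |\langle B\phi_n,\phi_n\rangle|^2\big)^{1/2}$. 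Since $A$ and $B$ are bounded, the first factor is bounded uniformly in $\lambda$, while the second factor tends to $0$ because $V(B)=0$; hence the cross term and the pure-$B$ term both vanish in the limit, giving $V(A+B) = V(A)$. There is no real obstacle here — the only mild subtlety, worth a half-sentence, is that $V$ is defined with a $\limsup$ rather than a genuine limit, so in items (1) and (3) one must argue with $\limsup$ and use that $\limsup(u_\lambda + v_\lambda) \le \limsup u_\lambda + \limsup v_\lambda$ together with $v_\lambda \to 0$, rather than splitting a limit that may not exist.
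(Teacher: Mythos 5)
Your argument is correct and follows essentially the same route as the paper: item (1) via $|u+v|^2\le 2|u|^2+2|v|^2$, item (2) via Cauchy--Schwarz, and item (3) by isolating the cross term (the paper uses the generalized Young inequality with a parameter $\varepsilon\to 0$ and then the symmetric trick $A=(A+B)-B$, whereas you bound the cross term by Cauchy--Schwarz on the Ces\`aro sums; both are fine, and your handling of the $\limsup$ is the right one). One small remark on item (2): the detour through the adjoint is unnecessary and produces the wrong order of factors --- applying Cauchy--Schwarz directly gives $|\langle A\phi_n,\phi_n\rangle|^2\le\Vert A\phi_n\Vert^2=\langle A^\star A\phi_n,\phi_n\rangle$, which is exactly the stated form; your proposed fix (substituting $A\mapsto A^\star$) does work, but only because $V(A^\star)=V(A)$ (since $\langle A^\star\phi_n,\phi_n\rangle=\overline{\langle A\phi_n,\phi_n\rangle}$), a fact that should then be stated explicitly.
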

\begin{proof}
Using the Cauchy-Schwarz inequality and the Young inequality we have $V(A+B)\leq 2\left( V(A)+V(B) \right)$, for all bounded linear operators $A$
 and $B$ on $L^2(M,\mu)$. The first claim follows.
The second also follows from the Cauchy-Schwarz inequality, since $\vert \left\langle A \phi_n ,\phi_n \right\rangle \vert^2 \leq \left\langle A^\star A \phi_n ,\phi_n \right\rangle$, for every integer $n$. Let us prove the third point. Using the generalized Young inequality, we have
$$
\langle (A+B)\phi_n,\phi_n \rangle ^2 \leq (1+ \varepsilon)\langle A\phi_n, \phi_n \rangle ^2+ (1+\frac{1}{\varepsilon}) \langle B\phi_n, \phi_n \rangle ^2 ,
$$
for every $\varepsilon  >0$, from which we infer that $V(A+B)-V(A) \leq \varepsilon V(A)+(1+\frac{1}{\varepsilon})V(B)$. Using that $V(B)=0$ and letting $\varepsilon$ go to $0$, we get that $V(A+B)\leq V(A)$. The converse inequality is obtained by writing $A=A+B-B$ and by following the same argument.
\end{proof}

\begin{lemma}\label{lemm:compact}
If $A$ is a compact operator on $L^2(M,\mu)$, then $E(A)=0$ and $V(A)=0$. 
\end{lemma}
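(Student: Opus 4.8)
The plan is to reduce the claim to the case of finite-rank operators by a density argument, and to handle finite-rank operators by a direct counting estimate exploiting the divergence $N(\lambda)\to+\infty$. First I would recall the two quantities at stake: for a bounded operator $A$ on $L^2(M,\mu)$,
$$
E(A)=\lim_{\lambda\to+\infty}\frac{1}{N(\lambda)}\sum_{\lambda_n\leq\lambda}\langle A\phi_n,\phi_n\rangle,\qquad
V(A)=\limsup_{\lambda\to+\infty}\frac{1}{N(\lambda)}\sum_{\lambda_n\leq\lambda}\vert\langle A\phi_n,\phi_n\rangle\vert^2,
$$
and note that since $(\phi_n)$ is orthonormal, $\vert\langle A\phi_n,\phi_n\rangle\vert\leq\Vert A\Vert$ for every $n$. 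By the inequality $V(A)\leq E(A^\star A)$ from Lemma \ref{remCS}, and since $A^\star A$ is compact whenever $A$ is, it suffices to prove that $E(A)=0$ for every compact $A$; the vanishing of $V(A)$ then follows by applying this to $A^\star A$ (and using $\vert\langle A\phi_n,\phi_n\rangle\vert^2\le\langle A^\star A\phi_n,\phi_n\rangle$, already noted in Lemma \ref{remCS}).

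Next I would treat the finite-rank case. If $A$ has finite rank $r$, write $A=\sum_{k=1}^{r}\langle\cdot,f_k\rangle\, h_k$ for suitable $f_k,h_k\in L^2(M,\mu)$. Then
$$
\Big\vert\sum_{\lambda_n\leq\lambda}\langle A\phi_n,\phi_n\rangle\Big\vert
=\Big\vert\sum_{k=1}^r\sum_{\lambda_n\leq\lambda}\langle\phi_n,f_k\rangle\langle h_k,\phi_n\rangle\Big\vert
\leq\sum_{k=1}^r\Big(\sum_{n}\vert\langle\phi_n,f_k\rangle\vert^2\Big)^{1/2}\Big(\sum_{n}\vert\langle h_k,\phi_n\rangle\vert^2\Big)^{1/2}
=\sum_{k=1}^r\Vert f_k\Vert\,\Vert h_k\Vert,
$$
by Cauchy--Schwarz and Bessel's inequality; this bound is a constant independent of $\lambda$. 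Dividing by $N(\lambda)$ and letting $\lambda\to+\infty$, using $N(\lambda)\to+\infty$, gives $E(A)=0$ for finite-rank $A$.

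Finally I would pass to the general compact case by approximation. Given compact $A$ and $\varepsilon>0$, choose a finite-rank operator $B$ with $\Vert A-B\Vert<\varepsilon$. Then, for every $\lambda$,
$$
\frac{1}{N(\lambda)}\Big\vert\sum_{\lambda_n\leq\lambda}\langle A\phi_n,\phi_n\rangle\Big\vert
\leq\frac{1}{N(\lambda)}\Big\vert\sum_{\lambda_n\leq\lambda}\langle B\phi_n,\phi_n\rangle\Big\vert+\frac{1}{N(\lambda)}\sum_{\lambda_n\leq\lambda}\vert\langle(A-B)\phi_n,\phi_n\rangle\vert
\leq\frac{1}{N(\lambda)}\Big\vert\sum_{\lambda_n\leq\lambda}\langle B\phi_n,\phi_n\rangle\Big\vert+\varepsilon.
$$
Letting $\lambda\to+\infty$ and using the finite-rank case for $B$, we get $\limsup_{\lambda\to+\infty}\frac{1}{N(\lambda)}\vert\sum_{\lambda_n\leq\lambda}\langle A\phi_n,\phi_n\rangle\vert\leq\varepsilon$; since $\varepsilon$ is arbitrary, $E(A)=0$. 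Applying this to the compact operator $A^\star A$ yields $E(A^\star A)=0$, hence $V(A)=0$ by \eqref{ineqV}. There is no serious obstacle here; the only point requiring a little care is making sure the approximation estimate is uniform in $\lambda$, which it is because $\vert\langle(A-B)\phi_n,\phi_n\rangle\vert\leq\Vert A-B\Vert$ termwise and there are $N(\lambda)$ terms.
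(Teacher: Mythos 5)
Your proof is correct, but it takes a genuinely different route from the paper's. The paper's proof is a one-liner: since $(\phi_n)_{n\in\N^*}$ is orthonormal, it converges weakly to $0$, so compactness of $A$ gives $\Vert A\phi_n\Vert\rightarrow 0$, hence $\vert\langle A\phi_n,\phi_n\rangle\vert\leq\Vert A\phi_n\Vert\rightarrow 0$ \emph{termwise}, and both the Ces\`aro mean and the Ces\`aro mean of the squares of a null sequence vanish. You instead approximate $A$ in operator norm by finite-rank operators and control the finite-rank case by Cauchy--Schwarz and Bessel, using only $N(\lambda)\rightarrow+\infty$. Both arguments are complete; the reduction of $V$ to $E(A^\star A)$ via \eqref{ineqV} is legitimate since your argument shows the limit defining $E(A^\star A)$ exists and equals $0$. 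The paper's approach is shorter and yields the stronger conclusion that each individual matrix element tends to $0$, not merely its Ces\`aro average. Your approach buys something different in the finite-rank (or trace-class) case, namely that the \emph{full} sum $\sum_{\lambda_n\leq\lambda}\langle A\phi_n,\phi_n\rangle$ stays bounded uniformly in $\lambda$, which is more than what is needed here; but after passing to general compact $A$ by $\varepsilon$-approximation this extra information is lost and the two proofs deliver the same lemma.
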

\begin{proof}
If $A$ is compact, then $\Vert A \phi_n\Vert\rightarrow 0$ as $n\rightarrow +\infty$. The lemma follows, using the Cauchy-Schwarz inequality.
\end{proof}

This lemma can in particular be applied to pseudo-differential operators of negative order. This gives the following result.

\begin{proposition}
Given any pseudo-differential operator $A$ of order $0$, $E(A)$ and $V(A)$ depend only
on the principal symbol of $A$ (and on the choice of a Hilbert basis of eigenfunctions in the case of $V(A)$).
\end{proposition}

\begin{lemma}\label{lem_mesureWeyleven}
The microlocal Weyl measure $W_\triangle $ is even with respect to the canonical involution of $S^\star M$. 
\end{lemma}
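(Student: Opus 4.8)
The statement to prove is that the microlocal Weyl measure $W_\triangle$ on $S^\star M$ is invariant under the canonical involution $\iota:(q,p)\mapsto(q,-p)$. The plan is to reduce this to the fact that the eigenfunctions $\phi_n$ may be taken to be real-valued, together with the behaviour of pseudo-differential operators under complex conjugation. Recall that the defining relation is
$$
\int_{S^\star M} a\, dW_\triangle = \lim_{\lambda\to+\infty}\frac{1}{N(\lambda)}\sum_{\lambda_n\leq\lambda}\langle \Op_+(a)\phi_n,\phi_n\rangle,
$$
and that, as noted just after Definition \ref{defi:weyl-measure}, the left-hand side does not depend on the choice of eigenbasis nor of quantization. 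So it suffices to show that, with a real-valued eigenbasis, the average on the right-hand side applied to $a\circ\iota$ equals the one applied to $a$.

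First I would fix a real-valued orthonormal eigenbasis $(\phi_n)$, which is possible since $\triangle_{sR}$ commutes with complex conjugation (this is stated in Section \ref{sec:rappels}). The key point is then the following elementary fact about conjugation: if $C$ denotes complex conjugation on $L^2(M,\mu)$ and $A=\Op(a)$ is a pseudo-differential operator of order $0$ with principal symbol $a$, then $CAC$ is again a pseudo-differential operator of order $0$, with principal symbol $\bar a\circ\iota$; equivalently $\overline{A^\star}=CA^\star C$ has principal symbol $a\circ\iota$ (in a given chart, if $A$ has symbol $\sigma(q,p)$ acting via $Au(q)=(2\pi)^{-n}\iint e^{i(q-q')\cdot p}\sigma(q,p)u(q')\,dq'\,dp$, then $CAC$ has symbol $\overline{\sigma(q,-p)}$). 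Since $\phi_n$ is real, $C\phi_n=\phi_n$, hence
$$
\langle A\phi_n,\phi_n\rangle = \overline{\langle A\phi_n,\phi_n\rangle} = \langle \overline{A\phi_n},\overline{\phi_n}\rangle = \langle CAC\phi_n,\phi_n\rangle ,
$$
where in the middle step I used $\langle u,v\rangle = \overline{\langle v,u\rangle}$ and reality of $\phi_n$. Taking for $A$ a positive quantization $\Op_+(a)$ of a real symbol $a$, the operator $C\Op_+(a)C$ is a (positive) quantization of a symbol with the same principal part $a\circ\iota$, so it differs from $\Op_+(a\circ\iota)$ by an operator of order $-1$, which is compact; by Lemma \ref{lemm:compact} such operators have vanishing Ces\`aro mean, hence do not affect the limit. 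Averaging the displayed identity over $\lambda_n\leq\lambda$ and letting $\lambda\to+\infty$ therefore gives $\int a\,dW_\triangle=\int (a\circ\iota)\,dW_\triangle$, which is exactly the asserted $\iota$-invariance.

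The only mildly delicate point — and what I would treat as the main obstacle, though it is really just bookkeeping — is the symbol-calculus identity that conjugation by $C$ sends $\Op(a)$ to a $\Psi^0$ operator with principal symbol $a\circ\iota$ (modulo lower order), and that this transformation preserves positivity of the quantization up to a compact error, so that the use of Lemma \ref{lemm:compact} is legitimate. This can be checked in local coordinates on the integral-kernel level and patched with a partition of unity; alternatively one can invoke that the principal symbol map is intrinsic and that $C$ acts on $T^\star M$ by $\iota$. No ergodicity or geometric input is needed, consistent with the fact that this lemma sits among the general preliminaries before the microlocal Weyl law is actually computed.
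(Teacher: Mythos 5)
Your proposal is correct and follows essentially the same route as the paper: choose a real-valued eigenbasis and exploit the symmetry of quantization under complex conjugation, which sends a symbol $a(q,p)$ to (the conjugate of) $a(q,-p)$, the paper stating this identity exactly for the Weyl quantization and then passing to $\Op_+$ modulo $\mathrm{o}(1)$, just as you pass to $\Op_+$ modulo a compact (order $-1$) error handled by Lemma \ref{lemm:compact}. The only difference is presentational: you verify the conjugation identity on the integral kernel of the standard quantization at the level of principal symbols, whereas the paper invokes the exact identity $C\,\OpW(a)\,C=\OpW(\tilde a)$ for real symbols.
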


\begin{proof}
A Weyl quantization is constructed by covering $M$ with a finite number of coordinate charts. Once this covering is fixed, by using a smooth partition of unity, it suffices to define the quantization of symbols that are supported in one of the chosen coordinate charts and there we choose the Weyl quantization. We denote it by $\OpW$ (see Appendix \ref{app:PDO}). 

For any orthonormal basis $(\phi_n)_{n\geq 1}$, for any integer $n$ we have
(for a symbol $a$ supported in a coordinate chart)
\begin{equation}\label{formula_conjugation}
\begin{split}
\langle \OpW(a) \phi_n,\phi _n \rangle & = (2\pi)^{-3}\int e^{i\langle
  x-y,\xi\rangle}a\left(\frac{x+y}{2},\xi\right) \phi_n(x)\overline{\phi_n(y)}\,
dx\,dy\,d\xi \\
&=(2\pi)^{-3}\int e^{i\langle
  x-y,\xi\rangle} a\left(\frac{x+y}{2},-\xi\right) \overline{\phi_n(x)}\phi_n(y)\,
dx\,dy\,d\xi\\
&= \langle \OpW(\tilde{a}) \overline{\phi_n},\overline{\phi _n}
\rangle ,
\end{split}
\end{equation}
where we have set $\tilde{a}(q,p)=a(q,-p)$.
Since $\triangle_{sR}$ commutes with complex conjugation, $(\overline{\phi_n})_{n\geq 1}$ is a Hilbert basis of eigenfunctions as well. Taking the limit $n\rightarrow +\infty$, it follows that $\int \tilde{a} \, dW_\triangle = \int a \, dW_\triangle$,
since the limit defining $W_\triangle$ does not depend on the Hilbert basis of
eigenfunctions. 
\end{proof}

The above definitions and results make sense in a general setting. We next identify the microlocal Weyl measure in the 3D contact case.


\subsection{The microlocal Weyl law in the 3D contact case}
\begin{theorem}\label{theo:weyl}
Let $A\in \Psi^0(M)$ be a pseudo-differential operator of order $0$ with principal symbol $a\in \mathcal{S}^0(M)$. In the 3D contact case, we have
\begin{equation*}
\sum_{\lambda_n \leq \lambda } \left\langle A \phi_n , \phi_n \right\rangle =
 \frac{P(M)}{64} \lambda^2 \int_M \big( a(q,\alpha_g (q))  +  a(q,-\alpha_g (q)) \big) \, d\nu + \mathrm{o}(\lambda^2) ,
\end{equation*}
as $\lambda\rightarrow+\infty$.

In particular, it follows that $N(\lambda) \sim \frac{P(M)}{32}\lambda^2$,
that the local and microlocal Weyl measures exist,
and that
\begin{equation}\label{localweylformula}
E(A) = \int _{S^\star M}a \, dW_\triangle = \frac{1}{2}  \int_M \big( a(q,\alpha_g (q)) + a(q,-\alpha_g (q)) \big) \, d\nu .
\end{equation}
\end{theorem}

\begin{remark}\label{rem9} 
Theorem \ref{theo:weyl} (proved in Section \ref{proof_microlocal}) says that the eigenfunctions ``concentrate'' on $\Sigma $. 
In our proof, we are able to infer the microlocal Weyl measure from the local one, because the distribution $D$ is of codimension one.
\end{remark}

\begin{remark}\label{rem:Weyl}
Equation \eqref{localweylformula} can be rewritten as
$$
dW_\triangle= \frac{1}{2} (d\hat{\nu}_1 + d\hat{\nu}_{-1}),
$$
where $\hat\nu_{\pm 1}$ is the lift of the Popp probability measure measure $\nu$ to $\Sigma_{\pm 1}$.
It also follows that 
$$
w_\triangle=\nu ,
$$
i.e., the local Weyl measure coincides with the Popp probability measure in the 3D contact case.
\end{remark}

Note that, due to \eqref{localweylformula}, $E(A)$ only depends on $a_{\vert\Sigma}$.
For $V(A)$, we have the following. 

\begin{corollary}\label{cor_weyl}
\begin{enumerate}
\item For every $A\in \Psi^0(M)$ whose principal symbol vanishes on $\Sigma$, we have  $V(A)=0$.
\item Let $A,B\in \Psi^0(M)$, with principal symbols $a,b\in\mathcal{S}^0(M)$ such that $a_{\vert \Sigma}=b_{\vert \Sigma}$. Then $V(A)=V(B)$.
\end{enumerate}
\end{corollary}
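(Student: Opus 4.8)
\textbf{Proof plan for Corollary \ref{cor_weyl}.}

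The two items are essentially immediate consequences of the microlocal Weyl law (Theorem \ref{theo:weyl}) together with the elementary bracket inequality $V(A)\leq E(A^\star A)$ from Lemma \ref{remCS}(2). For item (1), the plan is as follows. Suppose $A\in\Psi^0$ has principal symbol $a$ vanishing on $\Sigma$. Then $A^\star A\in\Psi^0$ has principal symbol $|a|^2$, which also vanishes on $\Sigma$ (and even to second order, but vanishing suffices). By formula \eqref{localweylformula} in Theorem \ref{theo:weyl}, the Ces\`aro mean $E(B)$ of any $B\in\Psi^0$ depends only on the restriction of its principal symbol to $\Sigma$; explicitly $E(A^\star A)=\tfrac12\int_M\big(|a|^2(q,\alpha_g(q))+|a|^2(q,-\alpha_g(q))\big)\,d\nu=0$ since $(q,\pm\alpha_g(q))\in\Sigma$. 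Hence by \eqref{ineqV} we get $V(A)\leq E(A^\star A)=0$, so $V(A)=0$. (One must check that $A^\star A$ is genuinely a classical pseudo-differential operator of order $0$ with principal symbol $|a|^2$ and that the Ces\`aro limit defining $E(A^\star A)$ exists; both follow from standard symbolic calculus and from Theorem \ref{theo:weyl} applied to $A^\star A$, using that the remainder $A^\star A-\Op(|a|^2)$ is of order $-1$, hence compact, so contributes $0$ by Lemma \ref{lemm:compact}.)

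For item (2), the plan is to write $A-B\in\Psi^0$; its principal symbol is $a-b$, which vanishes on $\Sigma$ by hypothesis. By item (1), $V(A-B)=0$. Now apply Lemma \ref{remCS}(3) with the bounded operators $A$ and $-(A-B)$, or more directly: since $B=A-(A-B)$ and $V(A-B)=0$, Lemma \ref{remCS}(3) gives $V(B)=V(A-(A-B))=V(A)$. This closes the argument.

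There is essentially no obstacle here; the only point requiring a line of care is the first one, namely justifying that passing from $A$ to $A^\star A$ keeps us in the class $\Psi^0$ with the expected principal symbol and that Theorem \ref{theo:weyl} indeed yields $E(A^\star A)=0$ — but this is routine symbolic calculus combined with the explicit form of $W_\triangle$ already established. I would present item (1) first (it is the substantive step) and deduce item (2) from it via Lemma \ref{remCS}(3), exactly as sketched above.
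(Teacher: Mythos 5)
Your proposal is correct and follows essentially the same route as the paper: item (1) via $\sigma_P(A^\star A)=|a|^2$, the microlocal Weyl formula \eqref{localweylformula}, and the inequality $V(A)\leq E(A^\star A)$ from Lemma \ref{remCS}; item (2) by applying item (1) to $A-B$ and invoking Lemma \ref{remCS}(3). The extra remarks on the symbolic calculus are routine and consistent with what the paper leaves implicit.
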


\begin{proof}[Proof of Corollary \ref{cor_weyl}]
The principal symbol of $A^\star A$ is $\vert a\vert^2$. If $a_{\vert\Sigma}=0$,
 then it follows from the microlocal Weyl formula \eqref{localweylformula} that $E(A^\star A)=0$,
 and hence $V(A)=0$. For the second point, writing $A=B+A-B$, we have $V(A-B)=0$ by the first point, and
 the conclusion follows by using Lemma \ref{remCS}. 
\end{proof}


\subsection{Proof of the microlocal Weyl law}\label{proof_microlocal}
In this section, we provide a general argument showing how one can derive the microlocal Weyl measure from the local one.
We first recall the local Weyl law.

\begin{theorem}\label{theo:locweyl}
For any 3D contact sR Laplacian, we have
$$
N(\lambda )\sim \frac{P(M)}{32} \lambda ^2 ,
$$
as $\lambda\rightarrow+\infty$, and $w_\triangle=\nu $ (Popp probability measure).
\end{theorem} 

The existence of a local Weyl measure in the 3D contact case has been implicitly established by many authors, without giving a name to it, or without giving an explicit expression of it (see for example \cite{Metivier1976}). In the forthcoming work \cite{CHT-II}, we give a very general proof (in a much more general context) inspired by the paper \cite{Ba-13}, based on the use of the so-called \textit{privileged coordinates} in sub-Riemannian geometry. In the 3D contact case, Theorem \ref{theo:locweyl} follows from results of \cite{Me-Sj-78}, where the authors establish the heat kernel asymptotics
$$
\int _M e(t,q,q)f(q)\, d\mu(q) \sim \frac{1}{4\pi ^2 t^2 }\left(\sum _{\ell=0}^{+\infty} \frac{1}{(2\ell+1)^2 }\right)\int _{\Sigma \cap  \{ \rho \leq 1 \}}
f(q) \, d\sigma (p,q)  ,
$$
as $t\rightarrow 0^+$, where $d\sigma $ is the symplectic measure on $\Sigma$, and where the heat kernel $e(t,q,q')$ is the Schwartz kernel of $e^{t\triangle_{sR}}$.
We then use the fact that $d\sigma =d\nu\otimes d\rho $ and the theorem follows by applying the Karamata tauberian theorem (see \cite{Ka-30}).

\medskip

Let us now establish the microlocal Weyl law.
Since our argument actually works in a more general setting than in the 3D contact case, we provide hereafter some results that are valid in a general sR context.

Let $(M,D,g)$ be a sR structure, where $M$ is a compact manifold of dimension $d$, $D$ is a subbundle of $TM$ and $g$ is a Riemannian metric on $D$. Let $\mu$ be a smooth density on $M$, and let $\triangle _{sR}$ be a sub-Riemannian Laplacian on $M$ (see Section \ref{sec_DeltaSR}).
In this general context, the characteristic manifold of $\triangle _{sR}$ is still defined by $\Sigma=D^\perp$. 
We assume that $\mathrm{Lie}(D)=TM$ (this is H\"ormander's assumption, which implies hypoellipticity),
 so that $\triangle _{sR}$ has a discrete real spectrum.

We have the following result on its spectral counting function $N(\lambda)=\# \{n \mid \lambda_n \leq \lambda \}$.

\begin{proposition}\label{prop_largeweylSR}
If the codimension of $D$ in $TM$ is positive, then 
$$
\lim_{\lambda \rightarrow +\infty} \frac{N(\lambda )}{\lambda ^{d/2}} =+\infty.
$$
\end{proposition}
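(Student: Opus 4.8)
The plan is to reduce the estimate to the classical Weyl law for elliptic operators, by bounding $-\triangle_{sR}$ from above by a family of genuine (Riemannian) Laplacians that degenerate to it. Set $c=\mathrm{codim}_{TM}(D)\geq 1$. Fix an auxiliary Riemannian metric $\tilde g$ on $M$ with $\tilde g_{\vert D}=g$, let $E=D^{\perp_{\tilde g}}$ be the $\tilde g$-orthogonal complement of $D$ in $TM$ (a smooth subbundle of rank $c$), and for $\varepsilon\in(0,1]$ define the Riemannian metric $g_\varepsilon$ on $M$ that restricts to $g$ on $D$, equals $\varepsilon^{-2}\tilde g$ on $E$, and makes $D$ and $E$ orthogonal. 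Since on $D_q$ one has $g_\varepsilon=g$, the constraint set $\{v\in T_qM:g_\varepsilon(v,v)\leq 1\}$ appearing in the (Legendre-type) definition of the Riemannian co-metric $g_\varepsilon^\star$ contains the set $\{v\in D_q:g(v,v)\leq 1\}$ defining the sub-Riemannian co-metric $g^\star$, whence $g_\varepsilon^\star\geq g^\star$ pointwise on $T^\star M$; equivalently, $\|d\phi\|_{g^\star}^2\leq\|d\phi\|_{g_\varepsilon^\star}^2$.

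Next I would introduce the Laplace--Beltrami operator $-\triangle_{g_\varepsilon}$ of $g_\varepsilon$ on $L^2(M,\mu)$, i.e. the Friedrichs extension of $\phi\mapsto\int_M\|d\phi\|_{g_\varepsilon^\star}^2\,d\mu$, with the \emph{same} density $\mu$; its form domain is $H^1(M)$, which, by the previous inequality of forms, is contained in the form domain of $-\triangle_{sR}$. Applying the min--max principle over finite-dimensional subspaces of $H^1(M)$ then gives $\lambda_n(-\triangle_{sR})\leq\lambda_n(-\triangle_{g_\varepsilon})$ for every $n$, hence $N(\lambda)\geq N_{g_\varepsilon}(\lambda)$ for all $\lambda$, where $N_{g_\varepsilon}$ is the counting function of $-\triangle_{g_\varepsilon}$. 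Since $-\triangle_{g_\varepsilon}$ is elliptic, the classical Weyl law yields $N_{g_\varepsilon}(\lambda)\sim C_\varepsilon\,\lambda^{d/2}$ as $\lambda\to+\infty$, with $C_\varepsilon$ a positive constant proportional to $\mathrm{vol}_{g_\varepsilon}(M)$ (the leading term is insensitive to the choice of density, since changing the density is, up to unitary conjugation, a bounded perturbation --- cf. Remark~\ref{rk:changemu1}). A block-determinant computation in a local frame adapted to $TM=D\oplus E$ shows $\mathrm{vol}_{g_\varepsilon}(M)=\varepsilon^{-c}\,\mathrm{vol}_{\tilde g}(M)$, so $C_\varepsilon=\varepsilon^{-c}C_1$ with $C_1>0$.

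Combining these facts, for every $\varepsilon\in(0,1]$ we get
$$
\liminf_{\lambda\to+\infty}\frac{N(\lambda)}{\lambda^{d/2}}\ \geq\ \lim_{\lambda\to+\infty}\frac{N_{g_\varepsilon}(\lambda)}{\lambda^{d/2}}\ =\ \varepsilon^{-c}C_1 ,
$$
and letting $\varepsilon\to 0$, using $c\geq 1$, the right-hand side tends to $+\infty$. This is exactly the assertion of the Proposition.

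I do not expect a genuine obstacle here: the argument is essentially the min--max principle combined with the classical Weyl law, and requires no ergodicity or fine microlocal input. The only points demanding care are bookkeeping: keeping the direction of the co-metric inequality $g_\varepsilon^\star\geq g^\star$ (and hence of the eigenvalue and counting-function comparisons) correct, and making sure the Weyl asymptotics for $-\triangle_{g_\varepsilon}$ are applied with the a priori given density $\mu$ rather than the Riemannian volume of $g_\varepsilon$ --- the latter handled through the bounded-perturbation remark. One could equivalently phrase the comparison by viewing $-\triangle_{sR}$ as the limit of the operators $-\triangle_{g_\varepsilon}$ as $\varepsilon\to 0$.
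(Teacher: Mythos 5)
Your proof is correct and follows essentially the same route as the paper: compare $-\triangle_{sR}$ with the elliptic Laplacians of the degenerating metrics $g_\varepsilon$ via the co-metric inequality and the min--max principle, then apply the classical Weyl law and let $\varepsilon\to 0$, using that the Riemannian volume blows up like a negative power of $\varepsilon$ raised to the codimension. The only differences (scaling by $\varepsilon^{-2}$ instead of $\varepsilon^{-1}$ on the complement, and the explicit choice of a $\tilde g$-orthogonal complement) are cosmetic.
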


\begin{proof}
Let $N$ be a subbundle of $TM$ such that $TM=D \oplus N$. Let $h$ be an arbitrary metric on $N$. For every $\varepsilon>0$, we consider the Riemannian metric $g_\varepsilon =g \oplus \varepsilon ^{-1}h$, and we consider the corresponding Riemannian Laplacian $\triangle_{g_\varepsilon, \mu}$. 
We have the two following facts.

First, denoting by $c$ the codimension of $D$ in $TM$, the spectral counting function of $\triangle_{g_\varepsilon,\mu}$ satisfies $N_\varepsilon (\lambda )  \sim C\varepsilon ^{-c/2}\lambda ^{d/2}$, for some constant $C>0$. Indeed, this follows from the Weyl law for the Riemannian Laplacian $\triangle_{g_\varepsilon,\mu}$, which is valid even though the measure $\mu$ is not the Riemannian volume, combined with the fact that $\mathrm{Vol}(M,\vert dq\vert_{g_\varepsilon}) =\varepsilon ^{-c/2}\mathrm{Vol}(M,\vert dq\vert_{g_1})$.

Second, with obvious notations, we infer from a minimax argument that $\lambda_n (\triangle_{g,\mu})\leq \lambda_n (\triangle_{g_\varepsilon,\mu})$, for every $n \in \N $ and every $\varepsilon >0$, because $ g^\star \leq  g_\varepsilon^\star =g^\star+\varepsilon h^\star$, while the $L^2$ norms are the same.

These facts imply that $N(\lambda) \geq N_\varepsilon(\lambda)$, and hence 
$$
\liminf_{\lambda \rightarrow +\infty } \frac{N(\lambda )}{ \lambda ^{d/2}} \geq  \lim_{\lambda \rightarrow +\infty }\frac{N_\varepsilon (\lambda )}{ \lambda ^{d/2}}=
\frac{C}{\varepsilon^{c/2}} ,
$$
for every $\varepsilon >0$. The result follows.
\end{proof}

\begin{remark} 
Note that, using similar  arguments, we have
$\lim_{ \varepsilon \rightarrow 0^+ } \lambda_n (\triangle_{g_\varepsilon,\mu})=\lambda_n (\triangle_{sR})$.
This fact implies that many Riemannian statements concerning $\lambda_n$ for $n$ fixed are as well valid for sR Laplacians. 
\end{remark}

We have the following consequence.

\begin{proposition} \label{prop:support}
If the microlocal Weyl measure $W_\triangle $ exists, then $\mathrm{supp}(W_\triangle)\subset S \Sigma $.
\end{proposition}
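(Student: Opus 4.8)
The idea is to show that if the principal symbol $a$ of $A\in\Psi^0$ vanishes on a conic neighborhood of $S\Sigma$ in $S^\star M$, then $E(A)=\int_{S^\star M}a\,dW_\triangle=0$; since such symbols are dense (in the relevant topology) among symbols supported away from $S\Sigma$, and $W_\triangle$ is a Radon measure, this forces $\mathrm{Supp}(W_\triangle)\subset S\Sigma$. So fix $A\in\Psi^0$ with principal symbol $a$ supported in the region $\{g^\star \geq c\,|p|^2\}$ for some $c>0$, i.e. bounded away from the characteristic cone. First I would use that away from $\Sigma$ the operator $-\triangle_{sR}$ is elliptic: on the conic support of $a$ we may write $a = b\cdot (g^\star/|p|^2)$ with $b\in\mathcal{S}^0$, hence $A = B\,(-\triangle_{sR})(1+\text{something})^{-1}$ up to lower order, or more cleanly $A = C(-\triangle_{sR}) + R$ where $C\in\Psi^{-2}$ and $R\in\Psi^{-\infty}$ (or at least of negative order), by a standard parametrix construction microlocally supported where $a$ lives.

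Next, applying this decomposition to the eigenfunctions gives, for $\lambda_n\leq\lambda$,
$$
\langle A\phi_n,\phi_n\rangle = \lambda_n\langle C\phi_n,\phi_n\rangle + \langle R\phi_n,\phi_n\rangle ,
$$
so that
$$
\frac{1}{N(\lambda)}\sum_{\lambda_n\leq\lambda}\langle A\phi_n,\phi_n\rangle
= \frac{1}{N(\lambda)}\sum_{\lambda_n\leq\lambda}\lambda_n\langle C\phi_n,\phi_n\rangle + \mathrm{o}(1),
$$
the last term vanishing in the limit because $R$ is compact (Lemma~\ref{lemm:compact}). For the main term, since $C\in\Psi^{-2}$ is bounded with operator norm $K$, one has $|\lambda_n\langle C\phi_n,\phi_n\rangle|\leq K\lambda_n \leq K\lambda$, which is too weak on its own; instead I would split the sum at a threshold $\lambda^\theta$ with $0<\theta<1$: for $\lambda_n\leq\lambda^\theta$ the contribution is at most $K\lambda^\theta N(\lambda^\theta)/N(\lambda) = \mathrm{O}(\lambda^{\theta}\cdot\lambda^{(\theta-1)d/2}\cdot(\text{growth}))$, which tends to $0$ once $\theta$ is chosen small enough using that $N(\lambda)/\lambda^{d/2}\to\infty$ (Proposition~\ref{prop_largeweylSR}); and for $\lambda^\theta<\lambda_n\leq\lambda$ one uses that $C(-\triangle_{sR})$ is itself a bounded operator of order $0$, so $\lambda_n\langle C\phi_n,\phi_n\rangle = \langle C(-\triangle_{sR})\phi_n,\phi_n\rangle$ is bounded, and the Ces\`aro average of a bounded quantity over the range $\lambda^\theta<\lambda_n\leq\lambda$ is controlled. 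Here the cleanest route is actually to observe directly that $A$ is a \emph{bounded} operator that factors as $A = \tilde C \cdot \mathbf{1}_{(-\triangle_{sR})>0}\cdot(\text{elliptic correction}) + R$ and to bound $\langle A\phi_n,\phi_n\rangle$ uniformly, then show the average kills it; I would organize it so that the key estimate is: for any fixed $\varepsilon>0$, the proportion of $n\leq N(\lambda)$ with $\lambda_n\geq\varepsilon\lambda$ stays bounded away from — no, rather, that $\frac1{N(\lambda)}\sum_{\lambda_n\leq\lambda}$ of a quantity bounded by $\min(K\lambda_n\cdot\lambda_n^{-1},\ K)$...

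Let me state the mechanism I expect to be decisive: write $A = D\,P + R$ where $P = -\triangle_{sR}$ near its essential spectrum is invertible on the range of $a$, $D\in\Psi^{-2}$, $R$ smoothing. Then $\langle A\phi_n,\phi_n\rangle = \lambda_n\langle D\phi_n,\phi_n\rangle+\langle R\phi_n,\phi_n\rangle$, and also $\langle A\phi_n,\phi_n\rangle$ is bounded by $\|A\|_{op}$ independently of $n$. So $|\langle A\phi_n,\phi_n\rangle|\leq \min(\|A\|_{op},\ \lambda_n\|D\|_{op}+|\langle R\phi_n,\phi_n\rangle|)$. The Ces\`aro sum of $|\langle R\phi_n,\phi_n\rangle|$ is $\mathrm{o}(1)$ by compactness. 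For the remaining part, $\frac1{N(\lambda)}\sum_{\lambda_n\leq\lambda}\min(\|A\|_{op}, \lambda_n\|D\|_{op})$: bounding by $\|D\|_{op}$ on small eigenvalues and observing the small eigenvalues are few — no. Honestly the efficient argument is: $\frac1{N(\lambda)}\sum_{\lambda_n\le\lambda}\lambda_n\langle D\phi_n,\phi_n\rangle$ equals $\frac1{N(\lambda)}\sum_{\lambda_n\le\lambda}\langle DP\phi_n,\phi_n\rangle$, and $DP = A-R \in\Psi^0$, so this is just $E(A) - E(R) = E(A)$, giving a tautology — which shows the decomposition alone is not enough and one genuinely needs the growth rate $N(\lambda)\gg\lambda^{d/2}$ together with a bound like $\sum_{\lambda_n\le\lambda}\lambda_n\langle D\phi_n,\phi_n\rangle = \mathrm{O}(\lambda^{d/2+1})$ coming from the Weyl law applied to the elliptic operator $DP\cdot P = DP^2\in\Psi^2$ restricted microlocally — i.e. $\sum_{\lambda_n\le\lambda}\lambda_n|\langle D\phi_n,\phi_n\rangle|^2$ or a trace estimate of Hörmander type for $DP^2$, whose counting asymptotics are governed by the Riemannian-type Weyl law on the elliptic region and hence are $\mathrm{O}(\lambda^{d/2+1})=\mathrm{o}(\lambda N(\lambda)/\lambda)=\mathrm{o}(N(\lambda))\cdot\mathrm{o}(1)$ after dividing.

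\textbf{The main obstacle.} The crux — and the step I expect to be delicate — is precisely this: ruling out that a non-negligible fraction of the spectral mass up to $\lambda$ sits microlocally away from $\Sigma$. Because $A$ is bounded, the naive bound gives only $\frac1{N(\lambda)}\sum_{\lambda_n\le\lambda}|\langle A\phi_n,\phi_n\rangle|\le\|A\|_{op}$, which is useless; one must exploit that, microlocally where $a\ne 0$, the operator behaves elliptically, so the eigenvalues contributing there grow like a \emph{Riemannian} Weyl law $\sim\lambda^{d/2}$, whereas the full count is $N(\lambda)\gg\lambda^{d/2}$ by Proposition~\ref{prop_largeweylSR}. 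Concretely I would introduce a Riemannian comparison metric $g_\varepsilon$ as in the proof of that proposition, observe that the part of the spectral projector $\mathbf{1}_{(-\triangle_{sR})\le\lambda}$ microlocalized to the conic support of $a$ has trace $\mathrm{O}(\lambda^{d/2})$ (by comparison with $\triangle_{g_\varepsilon,\mu}$ and an Egorov/microlocalization argument, using that $WF'$ of a parametrix stays in the elliptic region), so $|\sum_{\lambda_n\le\lambda}\langle A\phi_n,\phi_n\rangle|\le\|A\|_{op}\cdot\mathrm{Tr}(\chi\,\mathbf{1}_{(-\triangle_{sR})\le\lambda})+\mathrm{o}(1) = \mathrm{O}(\lambda^{d/2})$, which divided by $N(\lambda)\gg\lambda^{d/2}$ tends to $0$. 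That microlocal trace bound — that only $\mathrm{O}(\lambda^{d/2})$ eigenfunctions can have appreciable mass off $\Sigma$ — is the heart of the matter and is what I would spend the real work on; everything else (density of symbols, the even-ness and probability normalization of $W_\triangle$, passage from $E(A)=0$ to support containment) is routine.
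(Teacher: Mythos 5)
Your overall strategy is exactly the paper's: reduce the support statement to showing that for $A\in\Psi^0$ microlocally supported away from $\Sigma$ one has $\sum_{\lambda_n\leq\lambda}|\langle A\phi_n,\phi_n\rangle|=\mathrm{O}(\lambda^{d/2})$, and then divide by $N(\lambda)$, which grows faster than $\lambda^{d/2}$ by Proposition \ref{prop_largeweylSR}. You also correctly diagnose that the various parametrix factorizations $A=CP+R$ are tautological on their own, and you correctly isolate the microlocalized trace bound as ``the heart of the matter''. So the architecture is right.

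The gap is that this decisive estimate is asserted, not proved, and the mechanism you propose for it would not deliver it. The comparison with the Riemannian metrics $g_\varepsilon$ is a global minimax statement: it compares the quadratic forms and hence yields $\lambda_n(\triangle_{g,\mu})\leq\lambda_n(\triangle_{g_\varepsilon,\mu})$, i.e.\ a \emph{lower} bound on $N(\lambda)$ (this is precisely how Proposition \ref{prop_largeweylSR} is obtained). It carries no information about where individual eigenfunctions live in phase space, so it cannot be ``microlocalized'' into a bound on $\mathrm{Tr}\bigl(\chi\,\mathbf{1}_{-\triangle_{sR}\leq\lambda}\,\chi^\star\bigr)$ for a cutoff $\chi$ supported off $\Sigma$; an Egorov argument does not bridge this, since Egorov transports symbols, not variational eigenvalue comparisons. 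What the paper actually does (Lemma \ref{lemm:ell}) is a wave-trace argument of H\"ormander--Duistermaat--Guillemin type: one studies $Z_A(t)=\mathrm{Tr}\bigl(\exp(-it\sqrt{-\triangle_{sR}})A\bigr)$ for small $t$, chooses $\varepsilon$ so that the (homogeneous) sR geodesic flow keeps $WF'(A)$ inside the elliptic region for $|t|<\varepsilon$, builds an FIO parametrix for $U(t)A$ there (this is where one uses that $\sqrt{-\triangle_{sR}}$ is a genuine pseudo-differential operator of order $1$ away from $\Sigma$), and reads off the elliptic-type singularity at $t=0$, giving $\sum_n\langle A\phi_n,\phi_n\rangle\,\eta(\omega-\sqrt{\lambda_n})\sim C(A)\omega^{d-1}$ and hence the $\mathrm{O}(\lambda^{d/2})$ bound by a Tauberian argument. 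Some such parametrix construction (wave or heat) is unavoidable here; replacing it by the $g_\varepsilon$ comparison leaves the central step unproven.
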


A more precise result will be given in \cite{CHT-II} using an appropriate approximation of the heat kernel near the diagonal.

\begin{proof}
We start with the following lemma.

\begin{lemma}\label{lemm:ell} 
Let $A\in\Psi^0(M)$ be a pseudo-differential operator whose principal symbol is equal to zero in a neighborhood of $\Sigma=D^\perp$. 
Then
$$
\sum _{\lambda_n \leq \lambda }| \langle A \phi_n ,\phi_n \rangle | = \mathrm{O}\big( \lambda^{d/2} \big),
$$
as $\lambda\rightarrow+\infty$.
\end{lemma}

\begin{remark}\label{rem:sqrt}
The exponent $\frac{d}{2}$ is the one that we would obtain in the classical elliptic case. Outside of $\Sigma$, the operator $\triangle_{sR}$ is elliptic, and it follows that 
the operator $\sqrt{-\triangle_{sR}}$ (which is defined using functional calculus) is, away of $\Sigma$, a pseudo-differential operator with principal symbol $\sqrt{\sigma_p(-\triangle_{sR})}$ (see \cite[Corollary 9]{HV-00}).  
\end{remark}

\begin{proof}[Proof of Lemma \ref{lemm:ell}.]
We define $P=\sqrt{-\triangle_{sR}}$, $s_n=\sqrt{\lambda_n}$ and $p=\sqrt{g^\star}$.
Let $A\in\Psi^0(M)$ be a pseudo-differential operator whose principal symbol is equal to zero in a neighborhood of $\Sigma=D^\perp$. Without loss of generality, we assume that $A$ is positive. We define $a_n =\langle A\phi_n ,\phi_n \rangle $, for every $n\in\N^*$. The proof follows closely the arguments of \cite{Ho-68} (see, in particular, the end of Section 3 in this paper) and \cite[Section 2]{DG-75}. 
Let $C_1\subset T^\star M $ be a closed cone whose interior contains $WF'(A)$ (see the definition in Appendix \ref{app:PDO}), disjoint from $\Sigma $, and let 
$\varepsilon >0 $ be such that the sR geodesic flow (which coincides with the flow of $\vec{p}$) satisfies
$\phi_t (WF'(A)) \subset C_1$ if $|t|\leq \varepsilon $.
Let $\rho \in \mathcal{S}(\R)$ (Schwartz space) be a nonnegative function such that
$\rho \geq c >0$ on $[-1, 0]$ and whose Fourier transform
$\hat{\rho}(t)=\int_\R \rho(s)e^{-its}\,ds$ is supported in the interval $(-\varepsilon, +\varepsilon)$.
We consider the distribution $Z_A$ on $\R$ defined by 
$$
Z_A(t)= \mathrm{Tr}\left(e^{-i tP} A \right)=\sum _{n=1}^{+\infty} a_ne^{-its_n} .
$$
Multiplying by $\hat\rho(t)e^{its}$ and integrating over $\R$, we infer that
\begin{equation}\label{eqZA}
\mathrm{Tr} \left( \frac{1}{2\pi} \int_{\R} \hat{\rho}(t)e^{its}e^{-itP}A\, dt\right)\,=\,\sum _{n=1}^{+\infty} a_n \rho (s -s_n)= \frac{1}{2\pi}\int _\R e^{its} Z_A(t) \hat{\rho}(t) dt ,
\end{equation}
for every $s\in\R$.

Let us prove that the right-hand side of \eqref{eqZA} is $\mathrm{O}\left(s^{d-1}\right)$, as $s\rightarrow+\infty$.
Let $E$ be an elliptic pseudo-differential operator of order $1$ such that $E-P $ is smoothing near $C_1$.
We claim that the mapping $t \mapsto R(t)= e^{-itP }A -e^{-itE}A$ is smooth with values in trace-class operators  for $|t|\leq \varepsilon$.
This will be enough to infer the estimate $\mathrm{O}\left(s^{d-1}\right)$, since this estimate is known for the elliptic operator $E$ (see \cite[Section 2]{DG-75}).
It follows from the Duhamel formula that
$$
e^{-itP }A -e^{-itE}A = i\int _0^t e^{i(s-t)P}(P-E)e^{-isE}A \, ds .
$$
The right-hand side is smoothing for $|t|\leq \varepsilon$, because
$WF\left(e^{-isE}Au\right) \subset C_1$ for any distribution $u$ on $M$, and $P-E$ is smoothing on $C_1$.
Hence it is trace-class. Besides, it clearly depends smoothly on $t$. 
We get then that $\mathrm{Tr} (e^{-itP }A -e^{-itE}A)\hat{\rho}(t)=Z_A(t)\hat\rho(t)-Z_E(t)\hat\rho(t)$ is smooth with compact support and thus has a fast decaying Fourier transform. We have thus proved that the right-hand side of \eqref{eqZA} is $\mathrm{O}\left(s^{d-1}\right)$, as $s\rightarrow+\infty$.

As a consequence, we have
$$
\sum  _{ \{ n\ \mid\ s-1 \leq s_n \leq s \}} a_n\leq \frac{1}{c} \sum _{n=1}^{+\infty} a_n  \rho (s - s_n)=\mathrm{O} \left(s^{d-1}\right) ,
$$
and hence $\sum _{ s_n \leq s } a_n = \mathrm{O}\left( s^d \right)$, as $s\rightarrow+\infty$. The lemma is proved.
\end{proof}

Now, it follows from Proposition \ref{prop_largeweylSR} and Lemma \ref{lemm:ell} that, if $A\in\Psi^0(M)$ is microlocally supported in $T^\star M\setminus\Sigma$, then 
$$
\frac{1}{N(\lambda)} \sum_{\lambda_n\leq \lambda} \vert \langle A\phi_n , \phi_n\rangle\vert \longrightarrow 0,  
$$
as $\lambda\rightarrow+\infty$. This proves Proposition \ref{prop:support}.
\end{proof}

\begin{corollary} \label{coro:codim1}
If the horizontal distribution $D$ is of codimension $1$ in $TM$, and if the local Weyl measure $w_\triangle$ exists, then the microlocal Weyl measure $W_\triangle$ exists and is equal to half of the pullback of $w_\triangle$ by the double covering $S \Sigma \rightarrow M $ which is the restriction of the canonical projection of $T^\star M $ onto $M$.
\end{corollary}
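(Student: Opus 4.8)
The plan is to evaluate $E(\mathrm{Op}_+(a))$ directly for an arbitrary real symbol $a$ of order $0$ on $S^\star M$ and to show that it equals $\int_M \bar a\, dw_\triangle$, where $\bar a$ denotes the fiberwise average of the restriction of $a$ to the two sheets of $S\Sigma$; this simultaneously yields the existence of $W_\triangle$ and the announced formula. Since $D$ has codimension one, $\Sigma=D^\perp$ has one-dimensional fibers, so the restriction $\pi\colon S\Sigma\to M$ of the canonical projection is a two-fold covering; denote by $\tau\colon S\Sigma\to S\Sigma$ its deck involution (locally $(q,p)\mapsto(q,-p)$), by $\sigma^\pm\colon M\to S\Sigma$ the two smooth sections, and set $\bar a=\frac12\big(a\circ\sigma^++a\circ\sigma^-\big)\in C^\infty(M)$. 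The candidate $\frac12\,\pi^\star w_\triangle$, viewed as a measure on $S^\star M$ supported on $S\Sigma$, is a probability measure since $\pi$ is a double covering and $w_\triangle$ is a probability measure.

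First I would reduce to symbols that are $\tau$-invariant, i.e.\ to the case $a\circ\sigma^+=a\circ\sigma^-=\bar a$. For real-valued eigenfunctions, the argument in the proof of Lemma \ref{lem_mesureWeyleven} gives $\langle\mathrm{Op}_+(a)\phi_n,\phi_n\rangle=\langle\mathrm{Op}_+(\tilde a)\phi_n,\phi_n\rangle+\mathrm{o}(1)$ with $\tilde a(q,p)=a(q,-p)$; averaging over $\lambda_n\leq\lambda$ shows that the Ces\`aro means associated with $\mathrm{Op}_+(a)$ and with $\mathrm{Op}_+\!\big(\frac12(a+\tilde a)\big)$ have the same behaviour, and $\frac12(a+\tilde a)$ restricts to $\bar a$ on both sheets of $S\Sigma$. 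Hence it suffices to treat symbols $a$ whose restriction to $S\Sigma$ is constant along the fibers of $\pi$, equal to $\bar a\circ\pi$.

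The technical heart is the following claim: \emph{if $B\in\Psi^0$ has principal symbol vanishing on $S\Sigma$, then $\frac1{N(\lambda)}\sum_{\lambda_n\leq\lambda}|\langle B\phi_n,\phi_n\rangle|\to 0$} (in particular $E(B)$ exists and vanishes). Absorbing the lower-order part of $B$ into a compact operator, whose contribution is negligible by Lemma \ref{lemm:compact}, it is enough to treat $B=\mathrm{Op}_+(b)$ with $b|_{S\Sigma}=0$. Fix $\varepsilon>0$ and a homogeneous cutoff $\chi_\varepsilon$ on $S^\star M$ equal to $1$ outside the $\varepsilon$-neighborhood of $S\Sigma$ and vanishing near $S\Sigma$, and split $b=\chi_\varepsilon b+(1-\chi_\varepsilon)b$. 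The operator $\mathrm{Op}_+(\chi_\varepsilon b)$ has principal symbol vanishing in a neighborhood of $\Sigma$, so Lemma \ref{lemm:ell} gives $\sum_{\lambda_n\leq\lambda}|\langle\mathrm{Op}_+(\chi_\varepsilon b)\phi_n,\phi_n\rangle|=\mathrm{O}(\lambda^{d/2})$, which is $\mathrm{o}(N(\lambda))$ by Proposition \ref{prop_largeweylSR}. Since $b$ is continuous on the compact manifold $S^\star M$ and vanishes on $S\Sigma$, the remainder satisfies $\|(1-\chi_\varepsilon)b\|_\infty\leq\omega_b(\varepsilon)$ for a modulus of continuity $\omega_b$, and the positivity of the quantization yields a G{\aa}rding-type bound $|\langle\mathrm{Op}_+(c)\phi_n,\phi_n\rangle|\leq C\|c\|_\infty$ for all $c\in\mathcal{S}^0$; hence $\limsup_\lambda\frac1{N(\lambda)}\sum_{\lambda_n\leq\lambda}|\langle B\phi_n,\phi_n\rangle|\leq C\,\omega_b(\varepsilon)$, and letting $\varepsilon\to0$ proves the claim. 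I expect this step to be the only genuine difficulty; the rest is bookkeeping about the covering $\pi$.

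Finally, for $a$ reduced as above, write $\mathrm{Op}_+(a)=m_{\bar a}+B$, where $m_{\bar a}$ is multiplication by $\bar a$; then $B\in\Psi^0$ has principal symbol $a-\bar a\circ\pi$ (with $\bar a$ regarded as a fiberwise-constant order-$0$ symbol on $T^\star M$), which vanishes on $S\Sigma$ by the reduction. By the claim, $\frac1{N(\lambda)}\sum_{\lambda_n\leq\lambda}\langle B\phi_n,\phi_n\rangle\to0$, whereas $\frac1{N(\lambda)}\sum_{\lambda_n\leq\lambda}\langle m_{\bar a}\phi_n,\phi_n\rangle=\frac1{N(\lambda)}\sum_{\lambda_n\leq\lambda}\int_M\bar a\,|\phi_n|^2\,d\mu\to\int_M\bar a\,dw_\triangle$ by the assumed existence of the local Weyl measure. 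Therefore $E(\mathrm{Op}_+(a))=\int_M\bar a\,dw_\triangle=\frac12\int_{S\Sigma}a\,d(\pi^\star w_\triangle)$, the last equality because each of the two sheets of $\pi$ contributes a copy of $w_\triangle$ and $\bar a$ is the fiberwise average of $a|_{S\Sigma}$. This shows that $W_\triangle$ exists and equals $\frac12\,\pi^\star w_\triangle$, viewed as a probability measure on $S^\star M$ supported on $S\Sigma$; the extension from smooth symbols to arbitrary continuous functions follows as in Definition \ref{defi:weyl-measure}.
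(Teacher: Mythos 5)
Your proposal is correct and follows essentially the same route as the paper: reduce to even symbols via the conjugation identity for real eigenfunctions, show that symbols vanishing on $\Sigma$ contribute nothing to the Ces\`aro mean (the content of Proposition \ref{prop:support}, which you make explicit with the cutoff $\chi_\varepsilon$, Lemma \ref{lemm:ell}, Proposition \ref{prop_largeweylSR}, and the sup-norm bound from the positive quantization), and evaluate the remaining fiberwise-constant part with the local Weyl measure. The paper compresses your ``technical heart'' into the phrase ``Proposition \ref{prop:support} and a density argument,'' but the decomposition $a=(a-\bar a)+\bar a$ and the supporting lemmas are the same.
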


\begin{proof} 
It suffices to consider symbols that are even with respect to the involution $(q,p)\rightarrow (q,-p)$, because we already know that $dW_\triangle$ is even.
Using Proposition \ref{prop:support} and a density argument, we obtain that, if $a:S^\star M \rightarrow \R$ is continuous and vanishes on $\Sigma$, then $W_\triangle(a)$ is well defined and vanishes. Now, let $a$ be a general continuous function. Using the fact that $D$ is of codimension one, we write $a = a-\tilde{a}+\tilde{a}$, where $\tilde{a}$ is the microlocal lift to $\Sigma$ of the function $a$ on the base. By the above reasoning, $W_\triangle(a-\tilde{a})$ is well defined and vanishes. Now, by construction, $W_\triangle(\tilde{a})$ is well defined and is expressed with the local Weyl measure $w_\triangle$.   
\end{proof}

Let us now conclude the proof of Theorem \ref{theo:weyl}, in the 3D contact case.
It follows from Theorem \ref{theo:locweyl}  that the local Weyl measure exists and coincides with the Popp probability measure.
Then Theorem \ref{theo:weyl} follows from Corollary \ref{coro:codim1}.


\section{Birkhoff normal form}\label{sec:melrose}
In this section, we derive a normal form for the principal symbol of a 3D contact sub-Riemannian Laplacian, in the spirit of a result by Melrose in \cite[Section 2]{Me-84}. This normal form implies in particular that, microlocally near the characteristic cone, all 3D contact sub-Riemannian Laplacians (associated with different metrics and/or measures) are equivalent.

\medskip

Recall that the characteristic cone $\Sigma =(g^\star)^{-1}(0)$, given by \eqref{defSigma}, is a symplectic conic submanifold of
 $T^\star M\setminus \{0\}$ (the restriction $\omega_{\vert\Sigma}$ is symplectic),
parametrized by $(q,s)\mapsto (q,s\,\alpha_g(q))$ from
 $M\times(\R\setminus \{0\})$ to $T^\star M \setminus \{0\}.$ 
The function $\rho:\Sigma\rightarrow\R$ defined by
$\rho(s\,\alpha_g)=s$ is the Reeb Hamiltonian on $\Sigma$. According
to Lemma \ref{lemmaReeb} we also have $\rho= (h_Z)_{|\Sigma}$.
We also recall that 
\begin{equation*}
\begin{split}
\Sigma^+ &= \rho^{-1}((0,+\infty)) = \{ (q,s\, \alpha_g(q))\in T^\star M \ \mid\ s>0 \}  , \\
\Sigma^- & = \rho^{-1}((-\infty,0)) = \{ (q,s\, \alpha_g(q))\in T^\star M \ \mid\ s<0 \} ,
\end{split}
\end{equation*}
are positive conic submanifolds of $T^\star M$.
For every $q\in M$, we denote by $\Sigma_q^{\pm}$ the fibers in $\Sigma^\pm$ above $q$,
 that is, the half-lines generated in $\Sigma^{\pm}$ by $\alpha_g(q)$.
 
Given $k\in \N\cup \{+\infty\}$ and given a smooth function $f$ on $T^\star M$, the notation $f=\mathrm{O}_\Sigma(k)$
 means that $f$ vanishes along $\Sigma$ at order $k$ (at least). The word \textit{flat} is used when $k=+\infty$.

Given any open subset $U\subset M$, the notations $\Sigma_U$, $T^\star_U M$, \ldots, stand for the intersections of $\Sigma$, $T^\star M$, \ldots, with $T^\star U $.

\medskip

We establish in Section \ref{ss:classical} a classical Birkhoff normal form for $g^\star$, in three steps:
\begin{itemize}
\item In Sections \ref{sec_constructing} and \ref{sec_DW}, we give a normal form for the Hessian of $g^\star $ along $\Sigma$, thus obtaining a symplectic normal form modulo $\mathrm{O}_\Sigma(3)$.
\item In Section \ref{sec:semiglobal}, we derive a symplectic Birkhoff normal form 
$$
g^\star \sim  \sum_{j=1}^{+\infty} \rho_j I^j+  \mathrm{O}_\Sigma (\infty) ,
$$
where $I$ is a classical harmonic oscillator, Poisson commuting with all functions $\rho_j$, and where $\rho_1=\rho$ is the Reeb Hamiltonian.

These two first normal forms are valid globally near $\Sigma_U $, where $U\subset M$
is any open set on which the distribution $D$ is trivial, that is,
spanned by two vector fields $X$ and $Y$ globally defined on $U$. 
\item In Section \ref{sec:local}, we establish a symplectic normal form 
$$
g^\star \sim  \rho I + \mathrm{O}_\Sigma (\infty) ,
$$
which was initially found by Melrose. This normal form is global near $\Sigma_U$, where  $U\subset M$
is any open set that is \textit{Reeb trivial} according to the
following definition.
\end{itemize}

\begin{definition}\label{def:Reebtrivial}
An open set $U$ is said to be \textit{Reeb trivial} if:
\begin{enumerate}
\item the distribution $D$ is trivial in $U,$
\item for any smooth function $g$ defined in $U$, there exists a smooth solution $f$ defined in $U$ to the equation $Zf=g.$ 
\end{enumerate}
\end{definition}

In Section \ref{sec:QNF}, we quantize the above normal form, thus obtaining a quantum normal form that will be used to prove our main results.


\subsection{Classical normal form}\label{ss:classical}
The objective of this section is to design a homogeneous Birkhoff normal form of $g^\star$ along $\Sigma$. 

\paragraph{Symplectic normal bundle of $\Sigma$.}
In what follows, we focus on $\Sigma^+ = \rho^{-1}((0,+\infty))$ (the results for $\Sigma^-$ being similar, changing signs adequately).
 We denote by $T^\star M^+$ the open sub-cone of $T^\star M$ on which $h_Z>0$.

Since $ \Sigma^+$  is symplectic, for every $\sigma\in\Sigma^+$, we have the symplectic orthogonal decomposition
\begin{equation}\label{symplecticorthogonalsplitting}
T_\sigma(T^\star M^+) = T_\sigma \Sigma^+ \oplus \mathrm{orth}_\omega(T_\sigma \Sigma^+) ,
\end{equation}
where the notation $\mathrm{orth}_\omega$ stands for the symplectic $\omega$-orthogonal complement.
We define the symplectic normal bundle $N\Sigma^+$ of $\Sigma^+$ by
$$
N\Sigma^+ = \{ (\sigma,w)\ \mid\ \sigma\in\Sigma^+,\ w\in
N_\sigma\Sigma^+ \} \,\subset\, T(T^\star M^+),
$$
where
$N_\sigma\Sigma^+ = \mathrm{orth}_\omega(T_\sigma \Sigma^+) $.
The set $N\Sigma^+$ is a vector bundle over $\Sigma^+$, with fibers $N_\sigma\Sigma^+ \sim T_\sigma(T^\star M^+)/T_\sigma\Sigma^+$
 that are two-dimensional, and the manifold $\Sigma^+$ is canonically embedded in $N\Sigma^+$, by the zero section. 
We denote by $P$ the projection of $N\Sigma^+ $ onto $\Sigma ^+$. 

According to the tubular neighborhood theorem, $T^\star M^+$ is diffeomorphic to $N\Sigma^+$ in a neighborhood
 of $\Sigma^+$. More precisely, there exist a convex\footnote{A neighborhood $C_0$ of $\Sigma^+$
 in $N\Sigma^+$ is said to be convex if the intersection $C_0\cap N_q\Sigma^+$ with any fiber is convex.}
 neighborhood $C_0$ of $\Sigma^+$ in $N\Sigma^+$, a neighborhood $C$ of $\Sigma^+$ in $T^\star M^+$, and
 a diffeomorphism $\varphi:C_0\rightarrow C$ such that $\varphi(\sigma)=\sigma$ for every $\sigma\in\Sigma^+$.
 We will not use this diffeomorphism in what follows, but we can keep in mind that $N\Sigma^+ \simeq T^\star M^+$.

From now on, we assume that $D$ is trivial in an open subset $U\subset M$, and we will work only in $U$.
Considering a local $g$-orthonormal frame $(X,Y)$ of $D$ in $U$, we have, along $\Sigma^+_U$,
\begin{multline*}
T\Sigma^+=\ker dh_X\cap \ker dh_Y 
= \{ \xi\in T(T^\star M^+) \ \mid\ dh_X.\xi = dh_Y.\xi = 0 \}  \\
= \{ \xi\in T(T^\star M^+) \ \mid\ \omega(\vec h_X,\xi) = \omega(\vec h_Y,\xi) = 0 \} 
= \mathrm{orth}_\omega(\vec h_X,\vec h_Y)  ,
\end{multline*}
and thus
\begin{equation*}
N_\sigma\Sigma^+ = \mathrm{Span}\{ \vec h_X(\sigma), \vec h_Y(\sigma)\}  ,
\end{equation*}
along $\Sigma^+$.
This means that, with this local frame, the fibers are spanned by $\vec h_X$ and $\vec h_Y$.

Denoting by $\pi:T^\star M\rightarrow M$ the canonical projection, we
have, along $\Sigma^+$, $d\pi(\vec h_X) = X\circ\pi$ and $d\pi(\vec h_Y) = Y\circ\pi$, and
 thus $d\pi_\sigma(N_\sigma\Sigma^+)  = D_{\pi(\sigma)}$. In other words, for every $\sigma\in\Sigma^+$ the mapping
$$
\Theta(\sigma) = (d\pi_\sigma)_{\vert N_\sigma\Sigma^+}^{\vert D_{\pi(\sigma)}} : N_\sigma\Sigma^+\rightarrow D_{\pi(\sigma)}
$$
is an isomorphism, thus defining a bundle morphism $\Theta: N\Sigma^+\rightarrow D\circ\pi$ over $\Sigma$. Recalling that $D=\ker \alpha_g$
 and that $(d\alpha_g)_{\vert D}$ coincides with the oriented volume form induced by $g$, we transport this volume on fibers
 $N_\sigma\Sigma^+ $ by pullback under $\Theta$. We have $\Theta^\star d\alpha_g(\vec h_X,\vec h_Y) = d\alpha_g(X,Y)=1$.

We define $\omega_{N\Sigma^+}$ on the bundle $N\Sigma^+$ as follows: given any point $\sigma\in\Sigma^+$, we set 
$$
(\omega_{N\Sigma^+})_\sigma = (\omega_\sigma)_{\vert N_\sigma\Sigma^+} .
$$
Since $(\omega_{N\Sigma^+})_\sigma(\vec h_X(\sigma),\vec h_Y(\sigma)) = \omega_\sigma(\vec h_X(\sigma),\vec h_Y(\sigma))=\{h_X,h_Y\}(\sigma)=h_Z(\sigma)=\rho(\sigma)$, it follows that
$$
\omega_{N\Sigma^+} = \rho\, \Theta^\star (d\alpha_g)_{\vert D}.
$$
The symplectic normal bundle $N\Sigma^+$ inherits a linear symplectic structure on its fibers $N_\sigma\Sigma^+$ endowed with the symplectic form $(\omega_{N\Sigma^+})_\sigma$.
We define coordinates $(u,v)$ in $N_\sigma \Sigma^+_U$ with respect to the basis
$(\vec h_X(\sigma),\vec h_Y(\sigma))/\rho (\sigma )^{1/2}$, so that  $\omega_{N\Sigma^+}=du\wedge dv$.

In what follows, we denote by $\tilde\omega$ the  symplectic form on $N\Sigma^+_U$ defined as follows:
taking symplectic coordinates $(u,v)\in\R^2$ in the fibers $N\Sigma^+_\sigma$
 (that are symplectically orthogonal to $\Sigma$), we have $N\Sigma^+_U \simeq (\Sigma^+_U \times \R^2_{u,v})$, and we set
\begin{equation}\label{defomegatilde}
\tilde\omega = \omega_{\vert\Sigma^+} + du\wedge dv .
\end{equation}
Finally, we endow the symplectic conic manifold $(N\Sigma^+_U,\tilde\omega)$ with the conic structure defined by 
\begin{equation}\label{defconicstructure}
\tau \cdot (q,s\,\alpha_g(q) , u,v)=(q,\tau s\,\alpha_g(q) , \sqrt{\tau}u, \sqrt{\tau}v),
\end{equation}
for $\tau>0$, and the symplectic form $\tilde\omega$ is clearly homogeneous for this conic structure, in the sense that we have
 $\tau^\star (\omega_{\vert\Sigma^+} + du\wedge dv)=\tau (\omega_{\vert\Sigma^+} + du\wedge dv)$.

\paragraph{Birkhoff normal form.}

We take local coordinates $(\sigma,u,v)$ as above so that $T^\star
M^+_U \simeq N\Sigma_U^+\simeq (\Sigma^+_U \times \R^2_{u,v})$. 
On $\Sigma^+_U \times \R^2_{u,v}$ we consider the function
$(\sigma,u,v)\mapsto \rho(\sigma)$. With a slight abuse of notation, we continue to denote by $\rho$ this function.
  
For every fixed $\sigma\in\Sigma^+_U$, the function $(u,v)\mapsto
g^\star(\sigma,u,v)$ vanishes as well as its differential at
$(0,0)$. We consider its Hessian $\mathrm{Hess}(g^\star)$ and we
define the smooth function $I$ on $N_U\Sigma^+$ by
$$
I = \frac{1}{\rho} \mathrm{Hess}(g^\star).
$$
In the local symplectic coordinates $(\sigma,u,v)\in N\Sigma^+_U\simeq \Sigma^+_U \times \R^2_{u,v}$, we have 
\begin{equation*}
I(\sigma, u,v)=u^2+v^2.
\end{equation*}
Hereafter, $\Sigma^+_U \times \R^2_{u,v}$ is endowed with the symplectic form $\tilde\omega$ defined by \eqref{defomegatilde} and with the conic structure defined by \eqref{defconicstructure}.

\begin{theorem}\label{thm:BNF}
There exist a conic neighborhood $C_{0}$ of $\Sigma^{+}_{U}$ in $(T^\star U\setminus \{0\},\omega)$ and a homogeneous symplectomorphism $\chi:C_{0}\rightarrow \Sigma^+_U \times \R^2_{u,v}$ satisfying
$\chi (\sigma)=(\sigma,0)  $ for $\sigma \in \Sigma^+\cap C_{0}$,  such that 
\begin{equation*}
g^\star \circ \chi^{-1} =\rho I+\mathrm{O}_\Sigma(\infty).
\end{equation*}
\end{theorem}

In other words, this theorem says that, up to a canonical transform, $g^\star$ coincides
 with $\mathrm{Hess}(g^\star)+\mathrm{O}_\Sigma(\infty)$. This can be seen as a kind of symplectic Morse lemma along $\Sigma$.

This normal form has been obtained by Melrose in \cite[Section
2]{Me-84}, who even found a local normal form (with sketchy arguments however). Here, we establish only the formal part of the normal form, which is sufficient for our work. A proof of the full result (convergence of the normal form) is given in our work \cite{CHT-III} using a scattering method due to E. Nelson (see \cite{Nelson}).

The proof of Theorem \ref{thm:BNF} is quite long and is done in Sections \ref{sec_constructing}, \ref{sec_DW}, \ref{sec:semiglobal} and \ref{sec:local}, with some parts written in Appendix \ref{app:BNF}.
The main steps are the following:
\begin{itemize}
\item First of all, in Section \ref{sec_constructing}, we construct a 
 homogeneous diffeomorphism $\chi_1$ from a conic neighborhood $C_{0}$ of $\Sigma_U^+$ to $\Sigma ^+_U \times \R^2$
 such that 
\begin{equation*}
\begin{split}
\chi_1^* {\tilde{\omega}}&=\omega+\mathrm{O}_\Sigma(1) , \\
g^\star \circ \chi_1^{-1} & = \rho I +\mathrm{O}_\Sigma (3) .
\end{split}
\end{equation*}

\item Second, in Section \ref{sec_DW}, using a conic version of the Darboux-Weinstein lemma (stated and proved in Appendix \ref{app:DW}), we modify $\chi_1$ into a homogeneous diffeomorphism $\chi_2$ such that
\begin{equation*}
\begin{split}
\chi_2^* {\tilde{\omega}}&=\omega ,\\
g^\star\circ \chi_2^{-1} & = \rho I +\mathrm{O}_\Sigma(3).
\end{split}
\end{equation*}
In other words, we kill the remainder term $\mathrm{O}_\Sigma(1)$ in the pullback of $\tilde{\omega}$,
and thus we obtain a symplectomorphism $\chi_2$.

\item Finally, we improve the latter remainder to a flat remainder $\mathrm{O}_\Sigma(\infty)$, by solving cohomological equations 
in the symplectic conic manifold $((\Sigma^+ \times \R^2_{u,v}) \setminus \{0\},\tilde{\omega})$
(see Section \ref{sec:coeq}).
This is the most technical and lengthy part of the proof.
This is done in two steps:
\begin{enumerate}
\item In Section \ref{sec:semiglobal}, we first reduce $g^\star$ to the normal form
$\rho I + \sum _{k=2}^{+\infty} \rho_k I^k + \mathrm{O}_\Sigma (\infty )$, which is valid in  any open subset of $M$ on which $D$ is trivial. 
\item In Section \ref{sec:local}, we then reduce $g^\star$ to the normal form
$\rho I+\mathrm{O}_\Sigma (\infty) $, but this is valid only in any
open Reeb trivial subset $U$ of $M$ (i.e., on which $D$ is trivial and
on which one can solve equations $Zf=a$ globally on $U$, see Definition \ref{def:Reebtrivial}).
\end{enumerate}
\end{itemize}


\subsubsection{Construction of $\chi_1$}\label{sec_constructing}
The homogeneous diffeomorphism $\chi_1$ is constructed in an explicit way in the following result. 

\begin{proposition}\label{prop:chi1}
Let $(X,Y)$ be a smooth oriented $g$-orthonormal frame generating $D$ in the open subset $U\subset M$. The mapping 
$$
\begin{array}{rcl}
\chi_1 : T^\star U ^+ &\longrightarrow& \Sigma ^+_U \times \R^2_{u,v} \\
(q,p) & \longmapsto & \left( \big( q, h_Z(q,p) \alpha_g (q)\big) ,\left( \frac{h_X(q,p)}{\sqrt{h_Z(q,p)}},\frac{h_Y(q,p)}{\sqrt{h_Z(q,p)}} \right) \right)
\end{array}
$$
satisfies:
\begin{enumerate}
\item[(i)] $\chi_1 (\sigma)= (\sigma, 0) $ for every $\sigma \in \Sigma^+_U$;
\item[(ii)]  $\chi_1 $ is a homogeneous diffeomorphism
 of a conic neighborhood of $\Sigma ^+_U $
 in $T^\star M^+\setminus \{0\}$
onto a   conic neighborhood of $\Sigma ^+_U\times \{0\}$ in $\Sigma ^+_U \times \R^2$;
\item[(iii)] $\chi_1^\star \tilde{\omega} = \omega + \mathrm{O}_\Sigma (1)$;
\item[(iv)] $g^\star\circ\chi_1^{-1} = \rho I + \mathrm{O}_\Sigma (3)$.
\end{enumerate} 
\end{proposition}

\begin{proof}
The property (i) is obvious. 
Note also that, by construction, $\chi_1$ is homogeneous for the conic structure defined by \eqref{defconicstructure}.
The property (iv) is satisfied because
$(h_Z)_{\vert\Sigma^+}=\rho$ and 
$$
g^\star = h_X^2+h_Y^2 = h_Z \left( \left( \frac{h_X}{\sqrt{h_Z}} \right)^2 + \left( \frac{h_Y}{\sqrt{h_Z}}\right)^2 \right) .
$$
Since (iii) implies that the differential of $\chi_1$ is invertible along 
$\Sigma ^+_U $, (ii) will follow from (iii) and from the implicit function theorem. 
Now, (iii) follows from the symplectic orthogonal splitting \eqref{symplecticorthogonalsplitting} of $T_\sigma (T^\star M)$, the definition \eqref{defomegatilde} of $\tilde{\omega}$ and the choice of the coordinates $(u,v)$: indeed, noting that, since $(X,Y,Z)$ is a local frame of $TM$, we have $h_Z(q,p)>0$ for every $(q,p)\in\Sigma_U^+$ with $p\neq 0$, and defining on $\{h_Z>0\}$ the functions $u(q,p) = \frac{h_X(q,p)}{\sqrt{\vert h_Z(q,p)\vert}}$ and $v(q,p) = \frac{h_Y(q,p)}{\sqrt{\vert h_Z(q,p)\vert}}$, which are positively homogeneous of order $1/2$ with respect to $p$, we immediately get that
$({\omega_\psi})_{\vert\mathrm{orth}_\omega(T\Sigma)} = du(q,p)\wedge dv(q,p) $, and hence, for any $\psi=(q,p)\in\Sigma_U^+$ with $p\neq 0$, we have
$$
\omega_\psi = \alpha_g(q) \wedge dh_Z(q,p) - h_Z(q,p)\, d\alpha_g(q) + du(q,p)\wedge dv(q,p) .
$$
The proposition is proved.
\end{proof}

\begin{remark}
In the Heisenberg flat case, we recover the factorization of $g^\star=\sigma_P(-\triangle_{sR})$ done in Section \ref{sec:heis}, and in that case we have exactly $\chi_1^\star\tilde\omega = \omega$, without any remainder term.
\end{remark}


\subsubsection{Construction of $\chi_2$, using the Darboux-Weinstein lemma}\label{sec_DW}
In order to remove the remainder term $\mathrm{O}_\Sigma(1)$
 in $\chi_1^\star\tilde{\omega}$, we use a conic version of the Darboux-Weinstein lemma.
 This version is stated in a general version in Appendix \ref{app:DW}, Lemma \ref{lem_gen_Weinstein}.

It follows from that lemma, applied with $N=T^\star U^+$, $P=\Sigma^+_U$ and $k=1$, that there exists a homogeneous diffeomorphism $f$ defined in a conic neighborhood of $\Sigma^+_U$, such that $f^\star(\chi_1^\star\tilde{\omega})=\omega$, and such that $f$ is tangent to the identity along $\Sigma^+_U$, that is, such that $f=\mathrm{id}+\mathrm{O}_\Sigma(2)$.

We define $\chi_2=\chi_1\circ f$. Then $\chi_2$ is a local homogeneous
 diffeomorphism in some conic neighborhood of $\Sigma^+_U$, 
satisfying $\chi_2(\sigma)=(\sigma,0)$ for $\sigma\in\Sigma^+_U$, 
and such that $\chi_2^\star\tilde{\omega}=\omega$ (and thus $\chi_2$ is a symplectomorphism).

Since $f=\mathrm{id}+\mathrm{O}_\Sigma(2)$, using the fact that
 $u\, \mathrm{O}_\Sigma(2)=\mathrm{O}_\Sigma(3)$ and $v\, \mathrm{O}_\Sigma(2)=\mathrm{O}_\Sigma(3)$, we get that
$$
g^\star\circ\chi_2^{-1}  =  \rho I + \mathrm{O}_\Sigma(3) .
$$

At this step, we have thus obtained (up to a homogeneous canonical transform) the normal form with a remainder term $\mathrm{O}_\Sigma(3)$.
In order to improve this normal form at the infinite order,
 we are next going to solve a series of cohomological equations.

\subsubsection{Cohomological equations}\label{sec:coeq}
We consider the function $H=g^\star\circ \chi_2^{-1}$ defined on some open sub-cone of
$\Sigma^+_U \times \R^2$. 
According to Section \ref{sec_DW}, we have
\begin{equation*}
H=\rho I+\mathrm{O}_\Sigma(3) ,
\end{equation*}
and $H$ is homogeneous of order two with respect to the cone structure of $\Sigma^+_U \times \R^2$ defined by \eqref{defconicstructure}.

Our objective is to construct, near the half-line $\Sigma _U^+ \times  \{0\}$, a local symplectomorphism $\chi$ from $(\Sigma^+_U \times \R^2,{\tilde{\omega}})$ into itself such that 
$$
H\circ \chi = \rho I +\mathrm{O}_\Sigma(\infty) .
$$
The usual procedure, due to Birkhoff, 
and recalled, for pedagogical reasons, in Appendix \ref{app:BNF},
consists in constructing $\chi$ iteratively, by composing (symplectic) flows at time $1$ associated with appropriate Hamiltonian functions (also called Lie transforms), chosen by identifying the Taylor expansions at increasing orders, and by solving a series of (so-called) cohomological equations in the symplectic manifold $(\Sigma ^+_U \times \R^2,\tilde\omega)$. This is done at the formal level, and then the canonical transform $\chi$ is constructed by using the Borel theorem.

In the present setting, we have to adapt this general method and to define appropriate spaces of homogeneous functions and polynomials, sharing nice properties in terms of Poisson brackets. The procedure goes as follows, using what is written in Appendix \ref{app:BNF}.

\medskip

Let $C$ be a conic neighborhood of $\Sigma^+_U\times \{ 0\}$ (which will be taken sufficiently small in the sequel). For every integer $j$, we denote by $\mathcal{F}_j$ the set of functions $g$ that are smooth in $C$ and homogeneous of degree $j$ for the conic structure of $\Sigma^+ _U \times \R^2$ defined by \eqref{defconicstructure}, 
meaning that $g(q,\lambda s,\sqrt{\lambda} u,\sqrt{\lambda} v)= \lambda^j \cdot g(q,s,u,v)$, for all $\lambda >0$ and $(q,s,u,v)\in C$, in local coordinates with $\sigma=(q,s)$.
For every integer $k$, let $\mathcal{F}_{j,k}$ be the subspace of functions  of $\mathcal{F}_j$ that are homogeneous (in the classical sense) polynomials  of degree $k$ in $(u,v)$ with
coefficients which are homogeneous functions of degree $j-k/2$ in
$\Sigma ^+_U$. In the proof, we will occasionally use polar coordinates in $\R^2_{u,v}$ by setting $(u,v)=(r\cos \theta, r\sin \theta).$

For every integer $k$, we define the following subspaces of $\mathcal{F}_{j,k}$:
\begin{equation*}
\begin{split}
\mathcal{F}_{j,k}^0&=\left\{ a\in \mathcal{F}_{j,k} \ \mid\ \int_{0}^{2\pi} a(\sigma,r\cos \theta,r\sin \theta)\,
 d\theta =0,\quad \forall \sigma \in \Sigma^+_U,\quad\forall r>0 \right\},\\
\mathcal{F}_{j,k}^{\textrm{inv}}& =\left \{ (\sigma,u,v)\mapsto b(\sigma )(u^2+v^2)^{\frac{k}{2}}  \right\}.
\end{split}
\end{equation*}
The space $\mathcal{F}_{j,k}^0$ is the subset of functions of $\mathcal{F}_{j,k}$ of zero mean along circles.
We have clearly
\begin{equation}\label{sumT}
\mathcal{F}_{j,k} = \mathcal{F}_{j,k}^0\oplus \mathcal{F}_{j,k}^{\textrm{inv}},\qquad\textrm{and}\qquad \mathcal{F}_{j,k}^{\textrm{inv}} =
 \{0\}\ \textrm{if $k$ is odd}.
\end{equation}

In the sequel, we denote by $\mathcal{F}_{j,\geq k}$ (and accordingly, $\mathcal{F}_{j,\geq k}^0$ and $\mathcal{F}_{j,\geq k}^{\textrm{inv}}$)
 the set of functions of $\mathcal{F}_j$ whose Taylor expansion along $\Sigma^+_U \times \{0\}$
starts with terms of degree greater than $k$.

Note that $\rho I\in \mathcal{F}_{2,2}^{\textrm{inv}}$ and that $H\in \mathcal{F}_{2,\geq 2}$.

In what follows, we organize the procedure in two steps:
\begin{itemize}
\item In the first step, we get a normal form
$H\sim \rho I + \sum _{j=2}^{+\infty} \rho_j I^j $ with $\rho_j$ homogeneous of degree $2-j$ on $\Sigma ^+$.
This normal form is valid in any open subset $U$  of $M$ on which $D$ is trivial.
\item In the second step, we remove all terms  $\rho_j I^j$ (the ``invariant part'') in an open set that may be smaller.
\end{itemize}
This choice is due to the specific Poisson bracket properties that are satisfied in those spaces, and which are stated in the following lemma.

All Poisson brackets are now taken with respect to the symplectic form $\tilde{\omega }$ (or its restriction to one of the factors of $\Sigma^+_U \times \R^2$), and in order to keep readability we drop the index $\tilde{\omega }$ in the Poisson brackets.

\begin{lemma}\label{lem_cohom}
For all integers $j$ and $k$, we have
\begin{align}
&\{ \rho I,\mathcal{F}_{j,k}^{\mathrm{inv}}\}\subset \mathcal{F}_{j+1,k+2}^{\mathrm{inv}} , \label{cohom1_inclu}\\
& \{ \rho I,\mathcal{F}_{j,k}^0 \} = \mathcal{F}_{j+1,k}^0 \mod \mathcal{F}_{j+1,k+2}^0, \label{cohom2} \\
&\{ \mathcal{F}_{j,k}, \mathcal{F}_{j',k'}\} \subset \mathcal{F}_{j+j'-1,k+k'-2} \mod \mathcal{F}_{j+j'-1,k+k'} , \label{cohom3} \\
&\{ \mathcal{F}_{j,k}^{\mathrm{inv}}, \mathcal{F}_{j',k'}^{\mathrm{inv}}\} \subset \mathcal{F}_{j+j'-1,k+k'}^{\mathrm{inv}} . \label{cohom4}
\end{align}
Moreover, under the additional assumption that $U$ is Reeb trivial (according to Definition \ref{def:Reebtrivial}), \eqref{cohom1_inclu} becomes an equality, i.e.,
\begin{equation}\label{cohom1}
\{ \rho I,\mathcal{F}_{j,k}^{\mathrm{inv}}\} = \mathcal{F}_{j+1,k+2}^{\mathrm{inv}}.
\end{equation}
\end{lemma}

\begin{proof}
Recall that $\tilde\omega = \omega_{\vert\Sigma^+} + du\wedge dv$, that the coordinates $u$ and $v$ are symplectically conjugate, and that the coordinates $\sigma$ and $(u,v)$ are symplectically orthogonal. It follows that
\begin{equation}\label{fundform}
\{ a(\sigma )P(u,v), b(\sigma )Q(u,v) \} = \{ a, b\} PQ + ab  (\partial_u P\partial_v Q -\partial_v P \partial_u Q ) ,
\end{equation}
for all smooth functions $a$, $b$, $P,~Q$.
The inclusions \eqref{cohom1_inclu}, \eqref{cohom3} and \eqref{cohom4}, easily follow. 
To obtain (\ref{cohom2}), we observe that 
$\{  \rho I, a P  \}= \rho a \partial _\theta P \mod \mathcal{F}_{j+1,k+2}^0$ and that any homogeneous polynomial $P$ of degree $k$ and  of zero mean along circles is of the form $\partial _\theta Q$ with $Q$ of degree $k$ and  of zero mean along circles.

To obtain (\ref{cohom1}), we observe that $\{  \rho I, a I^k \}= \vec{\rho} a I^{k+1}$, so that
we have to solve the differential equation $\vec{\rho} a=b$. This is possible because $U$ is Reeb trivial, meaning that the equation $Z f=g$ admits a smooth solution $f$
for any smooth function $g$.
\end{proof}

\subsubsection{Invariant  normal form}\label{sec:semiglobal}
As explained previously, the objective of the first step is to construct a symplectomorphism $\chi $
reducing $H$ to
\[\tilde{H}= H\circ \chi =\rho I + \sum _{j=2}^{+\infty} \rho_j I^j + \mathrm{O}_\Sigma (\infty ) .\]
This uses the identity \eqref{cohom2}, which is valid globally.
More precisely, we have the following result.

\begin{proposition}\label{prop:invBNF}
 Under the assumptions of Proposition \ref{prop:chi1},
 there exist a conic neighborhood $C$ of $\Sigma ^+_U $ 
and a homogeneous symplectic  diffeomorphism $\chi:C\rightarrow\Sigma ^+_U \times \R^2$ 
such that
$$
g^\star \circ \chi^{-1}= \rho I + \sum _{j=2}^{+\infty} \rho_j I^j + \mathrm{O}_\Sigma (\infty ) ,
$$
with $\rho_j$ homogeneous of degree $2-j$ on $\Sigma ^+_U$.
\end{proposition}

\begin{proof}
For readers acquainted enough with the derivation of Birkhoff normal forms, this result follows from Lemma \ref{lem_cohom} and from the results of Appendix \ref{app:BNF} with
$\mathcal{G}_k = \mathcal{F}_{2,k}$, $\mathcal{S}_k=\mathcal{F}_{1,k}^0$, $\bar{\mathcal{G}}_k =\mathcal{F}_{2,k}^{\textrm{inv}}$, $k_0=3$, $p=2$.

For pedagogical reasons, and for readers who wish to follow the complete argument of proof, we have written a fully detailed proof in Appendix \ref{app:proof:prop:invBNF}.
\end{proof}

\subsubsection{Melrose's local normal form}\label{sec:local}
The objective of the second step is to construct a symplectomorphism allowing us to remove  all terms $\rho_j I^j$.

\begin{proposition}\label{prop:MelroseBNF}
Let $U$ be a Reeb trivial open subset of $M$ (according to Definition \ref{def:Reebtrivial}).
There exist a conic neighborhood $C'$ of  $(\Sigma^+_U \times \R^2,\tilde\omega)$ and a homogeneous symplectomorphism $\psi: C\rightarrow \Sigma^+_U \times \R^2$ such that $\sigma(C')\subset C$, and such that, in $C'$, $\psi $ is the identity
on $\Sigma^+_U \times \{0\}$
and
\begin{equation*}
H\circ \varphi\circ\psi = \tilde H \circ \psi = \rho I  + \mathrm{O}_\Sigma(\infty) .
\end{equation*}
\end{proposition}

\begin{proof}
We use Lemma \ref{lem_cohom} and the results of Appendix \ref{app:BNF} with
$\mathcal{G}_k = \mathcal{F}_{2,k}^{\textrm{inv}}$, $\mathcal{S}_k=\mathcal{F}_{1,k}^{\textrm{inv}}$, $\bar{\mathcal{G}}_k =0$, $k_0=4$, $p=0$.
For the convenience of the reader, a complete proof is given in Appendix \ref{app:proof:prop:MelroseBNF}.
\end{proof}

\subsection{Quantum normal form}\label{sec:QNF}
We are now going to quantize the Birkhoff normal form obtained in Theorem \ref{thm:BNF}.
One possibility could be to use Toeplitz operators associated with the symplectic
cones of the classical normal form (see \cite{BoutetGuillemin}).
But actually, it is simpler to avoid the use
of Toeplitz operators by using the flat Heisenberg manifold for which we have an explicit quantization
described in Section \ref{sec:heis}.

 We obtain the following quantum normal form, where 
we use the notion of a pseudo-differential operator that is flat along $\Sigma$ (see Definition \ref{def_flat} in Appendix \ref{app:flat}).

\begin{theorem}\label{thm_quantum_normal_form}
For every $q_0\in M$, there exists a (conic) microlocal neighborhood $\tilde{U}$ of $\Sigma_{q_0}$ in $T^\star M$ such that,
 considering all the following pseudo-differential operators as acting on functions microlocally supported \footnote{This means that their wave-front set is contained in $\tilde U$.} in $\tilde{U}$,
we have
\begin{equation}\label{quantum_normal_form}
-\triangle_{sR} = R\Omega + V_0+\mathrm{O}_\Sigma(\infty) ,
\end{equation}
where
\begin{itemize}
\item $V_0\in \Psi^0(M)$ is a self-adjoint pseudo-differential operator of order $0$,
\item $R\in \Psi^1(M)$ is a self-adjoint pseudo-differential operator of order $1$, with principal symbol 
\begin{equation}\label{symbR}
\sigma_P(R)=\vert h_Z\vert + \mathrm{O}_\Sigma(2),
\end{equation}
\item $\Omega\in \Psi^1(M)$ is a self-adjoint pseudo-differential operator of order $1$, with principal symbol 
\begin{equation}\label{symbOmega}
\sigma_P(\Omega)=I + \mathrm{O}_\Sigma(4),
\end{equation}

\item $[R,\Omega] = 0 \mod \Psi^{-\infty}(M)$ ,
\item $\exp(2i\pi\Omega)=\mathrm{id} \mod \Psi^{-\infty}(M)$.
\end{itemize}
\end{theorem}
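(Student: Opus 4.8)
The plan is to \emph{quantize} the classical Birkhoff normal form of Theorem~\ref{thm:BNF} (equivalently Lemma~\ref{prop:nftX}) by a microlocally unitary Fourier integral operator, to transport through it the exact factorisation $-\triangle_H=R_H\Omega_H$ of the Heisenberg model recalled in Section~\ref{sec:heis}, and then to remove the resulting lower-order errors by an iterative quantum normal form based on the cohomological equations of Lemma~\ref{lem_cohom}. First I would fix $q\in M$, work in a conic neighbourhood of $\Sigma_q^+$ (the case of $\Sigma_q^-$ being entirely analogous), and let $\chi$ be the homogeneous symplectomorphism onto a conic neighbourhood of $\Sigma_H^+$ in $T^\star M_H$ provided by Theorem~\ref{thm:BNF}, so that $\sigma_P(-\triangle_H)\circ\chi=\sigma_P(-\triangle_{sR})+\mathrm{O}_\Sigma(\infty)$. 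Standard Fourier integral operator theory furnishes an elliptic, microlocally unitary $U$ associated with $\chi$, and by the homogeneous Egorov theorem $\widehat\triangle:=U(-\triangle_{sR})U^\star$ is, microlocally near $\Sigma_H^+$, a self-adjoint pseudo-differential operator of order $2$ with
\begin{equation*}
\sigma_P(\widehat\triangle)=\sigma_P(-\triangle_{sR})\circ\chi^{-1}=\sigma_P(-\triangle_H)+\mathrm{O}_\Sigma(\infty).
\end{equation*}
Since the subprincipal symbol is invariant under conjugation by Fourier integral operators attached to homogeneous canonical transformations (working with half-densities, as in Section~\ref{sec:rappels}), and since $-\triangle_{sR}$ and $-\triangle_H$ both have vanishing subprincipal symbol (Remark~\ref{rk:changemu1}), the operator $\widehat\triangle-(-\triangle_H)$ has vanishing subprincipal symbol; combined with the $\mathrm{O}_\Sigma(\infty)$ flatness of its principal symbol, this forces its degree-$1$ homogeneous part to be flat along $\Sigma$ as well, so that $\widehat\triangle-(-\triangle_H)$ coincides, modulo operators flat along $\Sigma$, with a pseudo-differential operator of order $0$.

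The heart of the proof is a quantum normal form in the model: I would construct a microlocally unitary operator $\mathcal W=\exp\!\big(\mathrm i\sum_{j\geq1}B_j\big)$ with $B_j\in\Psi^{-j}$ self-adjoint, whose associated canonical transformation is the identity, correcting the error order by order starting at order $0$. At step $j$, conjugation by $\exp(\mathrm iB_j)$ changes the degree-$(1-j)$ homogeneous part of $\widehat\triangle-(-\triangle_H)$ by $\mp\{H_2,\sigma_{-j}(B_j)\}$, up to terms of lower degree and terms flat along $\Sigma$; writing that homogeneous part as the sum of its circle-average-zero component (in $\mathcal F^0$) and its invariant component (in $\mathcal F^{\mathrm{inv}}$, of the form $a(q,s)(u^2+v^2)^m$) as in \eqref{sumT}, one kills the first by solving the cohomological equation $\{H_2,\sigma_{-j}(B_j)\}=(\text{the }\mathcal F^0\text{-part})$ via Lemma~\ref{lem_cohom}, \eqref{cohom2} (iterating within the order, the residue landing in ever higher powers of $u^2+v^2$, hence ultimately flat along $\Sigma$). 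The invariant leftover $a(q,s)(u^2+v^2)^m$ has order $0$ in the fibre variable (because $u^2+v^2$ has degree $1$ and $a$ has degree $-m$ in $s$), so it is not fed back into the iteration but accumulated into a symbol which, Borel-summed over all $j$, is quantised into some $V_0'\in\Psi^0$. Borel-summing the $B_j$'s into a genuine microlocally unitary $\mathcal W$, one obtains $\mathcal W\widehat\triangle\,\mathcal W^\star=-\triangle_H+V_0'+\mathrm{O}_\Sigma(\infty)$, the remainder being genuinely flat along $\Sigma$ in the sense of Definition~\ref{def_flat}.

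It then remains to transport the model factorisation back. Setting $\Phi:=\mathcal W U$ (an elliptic, microlocally unitary Fourier integral operator whose canonical transformation is $\chi$, since $\mathcal W$ has identity canonical transformation) and $R:=\Phi^\star R_H\Phi$, $\Omega:=\Phi^\star\Omega_H\Phi$, $V_0:=\Phi^\star V_0'\Phi\in\Psi^0$, the relations $[R,\Omega]=0\bmod\Psi^{-\infty}$ and $\exp(2\mathrm i\pi\Omega)=\mathrm{id}\bmod\Psi^{-\infty}$ are inherited verbatim from the Heisenberg model, and applying $\Phi^\star(\,\cdot\,)\Phi$ to the previous identity (using $\Phi^\star\mathcal W=U^\star\bmod\Psi^{-\infty}$ and $U^\star U=\mathrm{id}\bmod\Psi^{-\infty}$) yields $-\triangle_{sR}=R\Omega+V_0+\mathrm{O}_\Sigma(\infty)$, that is \eqref{quantum_normal_form}; since the anti-self-adjoint part of $R\Omega$ lies in $\Psi^{-\infty}$, $V_0$ may be taken self-adjoint. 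Egorov's theorem gives $\sigma_P(R)=\sigma_P(R_H)\circ\chi=|h_Z|+\mathrm{O}_\Sigma(2)$ and $\sigma_P(\Omega)=\sigma_P(\Omega_H)\circ\chi=u^2+v^2+\mathrm{O}_\Sigma(3)$ with $u,v$ as in \eqref{defuv}, the orders of vanishing $2$ and $3$ arising precisely from the $\mathrm{O}_\Sigma(2)$ remainder in $\chi$ (cf.\ \eqref{expansion_chi3}) together with $u,v=\mathrm{O}_\Sigma(1)$; this gives \eqref{symbR}--\eqref{symbOmega}. I expect the main obstacle to lie in the bookkeeping required to keep everything genuinely microlocal and flat along $\Sigma$: producing the actual unitary $\mathcal W$ out of the formal series $\sum_jB_j$ by Borel summation while controlling every error term flatly along $\Sigma$, checking at each order that the cohomological equation is solvable with the correct flat residue, and verifying that the final remainder is flat along $\Sigma$ in the precise sense of Definition~\ref{def_flat} rather than merely of flat principal symbol.
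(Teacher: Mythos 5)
Your proposal is correct and, at its core, follows the same strategy as the paper: conjugate $-\triangle_{sR}$ by a microlocally unitary Fourier integral operator quantizing the symplectomorphism $\chi$ of Theorem \ref{thm:BNF}, invoke the Weinstein sub-principal symbol argument (Proposition \ref{prop_Weinstein}) together with the flatness of the principal-symbol discrepancy to conclude that $\widehat\triangle-(-\triangle_H)$ is an element of $\Psi^0$ modulo operators flat along $\Sigma$, transport $R_H$ and $\Omega_H$ back by conjugation, and read off \eqref{symbR}--\eqref{symbOmega} from \eqref{expansion_chi3}, the last two bullet points being inherited verbatim from the exact Heisenberg factorization. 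The one genuine divergence is your middle paragraph, the quantum Birkhoff normal form $\mathcal W=\exp\bigl(\mathrm i\sum_j B_j\bigr)$ that further reduces the order-$0$ discrepancy to an invariant symbol. That step is superfluous for the theorem as stated: nothing is required of $V_0$ beyond being a self-adjoint element of $\Psi^0$, so the paper simply takes $V_0=-\triangle_{sR}+\tilde\triangle_{sR}$ (in your notation, the order-$0$ part of $\widehat\triangle-(-\triangle_H)$, conjugated back), and the entire Borel-summation and flatness bookkeeping that you identify as the ``main obstacle'' disappears. What your extra conjugation would buy is a $V_0$ commuting with $\Omega$ --- a sharper normal form that is not needed here; the averaging against $\exp(\mathrm is\Omega)$ that the paper does need later is performed separately, on the test operators, in Lemma \ref{lem:aver}. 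With that paragraph deleted (and $\Phi$ replaced by $U$ throughout the final paragraph), your argument coincides with the paper's proof.
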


\begin{remark}\label{remflat}
In the flat Heisenberg case, there are no remainder terms in \eqref{quantum_normal_form},
 \eqref{symbR} and \eqref{symbOmega}, and we recover the operators $R_H$ and $\Omega_H$
 defined in Section \ref{sec:heis}. The pseudo-differential operators $R$ and $\Omega$ can be
 seen as appropriate perturbations of $R_H$ and $\Omega_H$, designed such that the last two items
 of Theorem \ref{thm_quantum_normal_form} are satisfied.
\end{remark}

\begin{remark}
We stress that the last two items are valid only if we consider both sides as acting on
 functions that are microlocally supported in $\tilde{U}$. 
If one wants to drop this assumption then all the above operators have to be extended
 (almost arbitrarily) outside $\tilde{U}$, and then the equalities hold only modulo remainder terms in $\mathrm{O}_\Sigma(\infty)$. 

Note that the operators $R$ and $\Omega$ depend on the microlocal neighborhood $\tilde U$ under consideration.
 This neighborhood can then be understood as a chart in the manifold $T^\star M$, in which the quantum normal form is valid.
\end{remark}

In the sequel we will call \textit{normal} any (conic) microlocal neighborhood ${U}$
 in which the conclusions of Theorem \ref{thm_quantum_normal_form} hold true. We also speak of a \textit{normal chart} in $T^\star M$.

\begin{proof}[Proof of Theorem \ref{thm_quantum_normal_form}.]
Applying the Darboux theorem near $q_0$ to the contact form $\alpha _g$ on $M$, and near $0$ to the contact form $\alpha _H$ on $M_H$, 
we identify symplectically $\Sigma^+_g \times \R^2 $ (locally near $q_0$) to  $\Sigma^+_H \times \R^2 $ (locally near $0$).
Then, applying Theorem \ref{thm:BNF} two times  gives an homogeneous canonical transformation $\chi $ from a conical neighborhood of 
$\Sigma ^+_g $ onto a conical neighborhood of $\Sigma ^+_H$ so that
$$
g^\star \circ \chi^{-1}=g_H^\star +\mathrm{O}_\Sigma (\infty) .
$$
Let $U_\chi$ be an unitary Fourier Integral Operator associated with the canonical transformation $\chi$ (see \cite{Du-96,Ho-71}
 and Appendix \ref{app:PDO}).
Setting $-\tilde\triangle_{sR} = - U_\chi^\star \triangle_{H} U_\chi$, we have (generalized Egorov theorem)
$$
\sigma_P(-\tilde\triangle_{sR}) = g^\star_H  \circ \chi= g^\star+ h.
$$ 
where $h$ is a symbol of order $2$ which is $\mathrm{O}_\Sigma (\infty)$.
Actually, since the sub-principal symbol of $-\triangle_{H}$ vanishes, it follows from an argument due to Weinstein
(see Proposition \ref{prop_Weinstein} in Appendix \ref{app:Weinstein})
that we can choose  $U_\chi$ so that the sub-principal symbol of $-\tilde\triangle_{sR}$ vanishes as well.
It follows that we have  $ -\triangle_{sR} =-\tilde\triangle_{sR}+ V_0 - \Op(h)$,
with  $V_0\in \Psi^0(M) $ and $V_0$  self-adjoint.
Setting 
$$
R=U_\chi^\star R_H U_\chi, \qquad \Omega=U_\chi^\star \Omega_H U_\chi ,
$$
we have $\sigma_P(R)=\sigma_P(R_H)\circ\chi$ and
 $\sigma_P(\Omega)=\sigma_P(\Omega_H)\circ\chi$. The relations  \eqref{symbR} and \eqref{symbOmega} follow from the properties 
of the classical normal forms and the fact that $\rho =|h_Z| + \mathrm{O}_\Sigma (2)$ (both functions coincide on $\Sigma $ as well
as their Hamiltonian flows). 
The rest follows from the corresponding relations in the Heisenberg flat case.
\end{proof}

\begin{remark}\label{rem_symbolsROmega}
It also follows from the proof that 
\begin{equation}\label{symboleDelta}
g^\star = \sigma_P(-\triangle_{sR}) = \sigma_P(\Omega) \sigma_P(R) -h,
\end{equation}
where $h\in\mathcal{S}^2(M)$ with $h=\mathrm{O}_\Sigma (\infty)$.
\end{remark}



\section{Variance estimate and proof of Theorem \ref{thm1}}\label{sec_variance}
In this section, we are going to establish the following result (from which Theorem \ref{thm1} follows).
Let us choose some homogeneous symbols $\pi_+$ and $\pi_-$ of order $0$ such that
$\pi_+ $ (resp., $\pi_-$) vanishes near $\Sigma _-$ (resp., near $\Sigma _{+}$) and such that
$\pi_-=\pi_+ \circ \sigma $, where $\sigma $ is the canonical involution on $T^\star M$.

\begin{proposition}\label{prop:pretheo} 
We assume that the Reeb flow is ergodic on $(M,\nu)$.
Let $\Pi_\pm$ be pseudo-differential operators of order $0$ whose principal symbols are $\pi_\pm $.
For every pseudo-differential operator $A\in\Psi^0(M)$ whose principal symbol vanishes on $\Sigma ^-$, we have $V(A-\hat A_+)=0$, where 
\begin{equation}\label{defhataplus}
\hat A_+=\hat a_+\Pi_+, \qquad \hat a_+=\int_M a(q,\alpha_g (q)) \, d\nu.
\end{equation}
Similarly, for every $A\in\Psi^0(M)$ whose principal symbol vanishes on $\Sigma ^+$, we have $V(A-\hat A_-)=0$, with 
$\hat A_-=\hat a_-\Pi_-$.
\end{proposition}
\medskip

Admitting temporarily Proposition \ref{prop:pretheo}, let us prove Theorem \ref{thm1}.
As explained in the introduction, in order to establish the QE property, it suffices to prove that, for every
 pseudo-differential operator $A\in\Psi^0(M)$, if either the eigenfunctions $\phi_n,~n\in \N, $ are real-valued
or $a$ is even, then
\begin{equation*}
V(A-\bar{a}\,\mathrm{id})=0,
\end{equation*}
where we have set 
$$
\bar{a}=\frac{1}{2}\int_M  \big( a(q,\alpha_g (q))+ a(q,-\alpha_g (q)) \big) \, d\nu.
$$
In order to prove that fact, we write $A=A_+ + A_-$,
 with the principal symbol of $A_+$ (resp., of $A_-$) vanishing on $\Sigma^-$ (resp., on $\Sigma^+$). Using the above results, we have $V(A_+ -\hat a_+\Pi_+)=0$ and $V(A_- -\hat a_-\Pi_-)=0$. Since 
$$
V(A-\hat A_+ -\hat A_-) \leq 2\left( V(A_+ -\hat A_+) + V(A_- -\hat A_-) \right) ,
$$
we infer that $V(A-\hat a_+ \Pi_+ -\hat a_- \Pi_-)=0$. Besides, noting that $\pi_++  \pi_- = 
1 \,+O_{\Sigma}(1)$ and that $\bar a = \frac{1}{2} (\hat a_++\hat a_-)$, we have
\begin{equation*}
 \hat a_+ \pi_+ +\hat a_- \pi_- 
 = \frac{\hat a_+  + \hat a_-}{2}(\pi_++  \pi_-)+  \frac{\hat a_+-\hat a_-}{2} (\pi_+ -  \pi_-) 
= \bar{a}  + \frac{\hat a_+-\hat a_-}{2}(\pi_+ -  \pi_-) + O_{\Sigma}(1),
\end{equation*}
Indeed:
\begin{itemize}
\item If the eigenfunctions are real-valued, using  \eqref{formula_conjugation} and the
fact that   $\pi_+ -  \pi_-$ is odd,  it  follows that $\langle (\Pi_+ -  \Pi_-)\phi_n , \phi_n \rangle \rightarrow 0  $ as $n\rightarrow +\infty$,  and hence 
 $V(\Pi_+ -  \Pi_-)=0$.
Using Lemma \ref{remCS}, we conclude that $V(A-\bar{a}\,\mathrm{id})=0$.
\item If $a$ is even, then ${\hat a_+-\hat a_-}=0$.
\end{itemize}
Theorem \ref{thm1} is proved.

\medskip

The proof of Proposition \ref{prop:pretheo} is done in Section \ref{sec_proof_prop:pretheo}. We  first establish in Sections \ref{sec:aver} and \ref{main:lemma} two useful preliminary lemmas.


\subsection{Averaging in a normal chart}\label{sec:aver}
Given $A\in \Psi^0(M)$, according to Corollary \ref{cor_weyl}, $V(A)$ depends only on the restriction $a_{\vert\Sigma}$ (where $a=\sigma_p(A)\in\mathcal{S}^0(M)$), in the sense that, if the principal symbols of two pseudo-differential operators $A_1$ and $A_2$  of order $0$ agree on $\Sigma$, then $V(A_1-A_2)=0$. This property gives us the possibility to modify $A$ without changing $a_{\vert\Sigma}$, and we can use this latitude to impose the additional condition $[A,\Omega]=0 \mod \Psi^{-\infty}(M)$.

\begin{lemma}\label{lem:aver}
Let $A\in \Psi^0(M)$ be microlocally supported in a normal chart $U$ and let $\Omega$ be given by Theorem \ref{thm_quantum_normal_form} (in the microlocal neighborhood $U$). Assuming $U$ small enough, the operator defined by
$$
B= \frac{1}{2\pi} \int_0^{2\pi} \exp(is\Omega)A\exp(-is\Omega)\, ds,
$$
is in $\Psi^0(M)$, is microlocally supported in $U$, and satisfies
\begin{equation*}
\begin{split}
\sigma_P(B)&=\sigma_p(A)+\mathrm{O}_\Sigma(1), \\
[B,\Omega]&= 0 \mod \Psi^{-\infty}(M).
\end{split}
\end{equation*}
\end{lemma}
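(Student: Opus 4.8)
The plan is to obtain $B$ by time-averaging over one period of the unitary group generated by $\Omega$, and to read off its properties from the generalized Egorov theorem together with the last item of Theorem \ref{thm_quantum_normal_form}, namely $\exp(2i\pi\Omega)=\mathrm{id}$ modulo $\Psi^{-\infty}$ on functions microlocally supported in a normal chart.

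First I would describe the structure of the conjugated family $A(s)=\exp(is\Omega)A\exp(-is\Omega)$. Since $\Omega\in\Psi^1$ is self-adjoint, $\exp(is\Omega)$ is a one-parameter unitary group; working microlocally in a normal chart and conjugating, via the Fourier integral operator $U_\chi$ of Theorem \ref{thm_quantum_normal_form}, to the Heisenberg model (in which, up to the $R_H$-factor, $\Omega_H$ is essentially a harmonic oscillator on each $z$-Fourier mode), $\exp(is\Omega)$ is realized as a homogeneous Fourier integral operator whose canonical relation is the graph of the homogeneous canonical transformation $\Phi_s=\exp(s\vec{\sigma_P(\Omega)})$. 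By the generalized Egorov theorem, $A(s)\in\Psi^0$ for every $s$, it depends smoothly on $s$ with symbol semi-norms bounded uniformly for $s\in[0,2\pi]$, and $\sigma_P(A(s))=a\circ\Phi_s$. The key point is that $\Phi_s$ fixes $\Sigma$ pointwise: by \eqref{symbOmega}, $\sigma_P(\Omega)=u^2+v^2+\mathrm{O}_\Sigma(3)$, the Hamiltonian vector field of $u^2+v^2$ is, in model coordinates, the rotation $2v\,\partial_u-2u\,\partial_v$, which involves neither $q$ nor $s$ and vanishes on $\{u=v=0\}$, while the Hamiltonian vector field of the $\mathrm{O}_\Sigma(3)$ remainder vanishes along $\Sigma$ as well (the differential of a function vanishing to order $3$ along $\Sigma$ vanishes there to order $2$). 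Hence $(\sigma_P(A(s)))_{\vert\Sigma}=a_{\vert\Sigma}$ for every $s$. Since moreover $\Phi_s$ is, to leading order, a rotation in the fibre variables $(u,v)$ fixing all the other coordinates, it preserves any sufficiently small conic neighborhood of $\Sigma_q$ invariant under such rotations; choosing $U$ of that form (inside a normal chart) guarantees that each $A(s)$, $s\in[0,2\pi]$, is microlocally supported in $U$.

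Next I would average. The operator $B=\frac{1}{2\pi}\int_0^{2\pi}A(s)\,ds$, being the integral over a compact interval of a smooth family of order-$0$ operators with uniformly bounded symbols, is itself in $\Psi^0$ and microlocally supported in $U$, with full (hence principal) symbol $\frac{1}{2\pi}\int_0^{2\pi}\sigma_P(A(s))\,ds$; since each integrand agrees with $a$ on $\Sigma$, so does the average, which yields $\sigma_P(B)=\sigma_P(A)+\mathrm{O}_\Sigma(1)$. For the commutation with $\Omega$, I would differentiate, $\frac{d}{ds}A(s)=i[\Omega,A(s)]$, and pull $\Omega$ through the $s$-integral to get
\[
i[\Omega,B]=\frac{1}{2\pi}\int_0^{2\pi}\frac{d}{ds}A(s)\,ds=\frac{1}{2\pi}\big(A(2\pi)-A(0)\big)=\frac{1}{2\pi}\big(\exp(2i\pi\Omega)\,A\,\exp(-2i\pi\Omega)-A\big).
\]
By the last item of Theorem \ref{thm_quantum_normal_form}, $\exp(2i\pi\Omega)=\mathrm{id}$ modulo $\Psi^{-\infty}$ on functions microlocally supported in the normal chart; since $A$ is microlocally supported in $U$, inserting microlocal cutoffs gives $\exp(2i\pi\Omega)\,A\,\exp(-2i\pi\Omega)=A$ modulo $\Psi^{-\infty}$, whence $[\Omega,B]\in\Psi^{-\infty}$, that is, $[B,\Omega]=0\mod\Psi^{-\infty}$.

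I expect the main obstacle to be the first step: one has to justify that $\exp(is\Omega)$ is a Fourier integral operator for which Egorov's theorem and, crucially, uniform-in-$s$ symbol estimates hold — even though $\Omega\in\Psi^1$ is not elliptic, its principal symbol $u^2+v^2$ vanishing on $\Sigma$ — so that the average $B$ is a genuine pseudo-differential operator of order $0$ and not merely a bounded operator. The cleanest route around this is to transport everything by $U_\chi$ into the Heisenberg model, where $\Omega_H$ is, up to unitary conjugation and rescaling, a harmonic oscillator in the transverse variables on each $z$-Fourier mode, so that $\exp(is\Omega_H)$ is the explicit (metaplectic) rotation and the whole analysis is classical; the $\mathrm{O}_\Sigma(\infty)$ discrepancies between $-\triangle_{sR}$ and its quantum normal form are harmless here, since the conclusion of the lemma is only asserted modulo $\Psi^{-\infty}$ and modulo $\mathrm{O}_\Sigma(1)$ in the symbol. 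A more routine, bookkeeping-type point, which also requires shrinking $U$, is to check that the flow-out $\bigcup_{s\in[0,2\pi]}\Phi_s(U)$ remains inside a normal chart — which holds precisely because $\Phi_s$ is to leading order a bounded rotation in the fibre variables and fixes $\Sigma$.
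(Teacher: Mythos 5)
Your proposal is correct and follows essentially the same route as the paper: this is exactly the Weinstein averaging argument the authors use, namely setting $B_s=\exp(is\Omega)A\exp(-is\Omega)$, invoking Egorov to get $\sigma_P(B_s)=a\circ\exp(s\vec w)$ with $w=\sigma_P(\Omega)=\mathrm{O}_\Sigma(2)$ so that the flow fixes $\Sigma$ and $U$ can be shrunk to be flow-invariant, and then integrating $\frac{d}{ds}B_s=i[\Omega,B_s]$ over a period together with $\exp(2i\pi\Omega)=\mathrm{id}\bmod\Psi^{-\infty}$. Your extra care about why $\exp(is\Omega)$ is a legitimate FIO (via conjugation to the Heisenberg model) is a point the paper leaves implicit, but it does not change the argument.
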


\begin{proof}
The proof follows an argument introduced by Weinstein in \cite{We-77} (see also \cite{yCdV-79}). 
For every $s\in[0,2\pi]$, we set $B_s = \exp(is\Omega) A\exp(-is\Omega)$. 
By the Egorov theorem, we have $B_s\in\Psi^0(M)$ and $\sigma_P(B_s) = a\circ {\exp(s \vec{w})}$, where $w=\sigma_P(\Omega)$ and ${\exp(s \vec{w})}$ is the flow generated by the Hamiltonian vector field $\vec{w}$ associated with the Hamiltonian function $w$.
Here, the microlocal neighborhood $U$ in which this construction is performed must be chosen small enough, so that it is invariant under the flow ${\exp(s \vec{w})}$, for $s\in[0,2\pi]$. This is possible because, using \eqref{symbOmega}, we have $w=\mathrm{O}_\Sigma(2)$, and therefore ${{\exp(s \vec{w})}}_{\vert\Sigma}=\mathrm{id}$. Moreover, we infer that $\sigma_P(B_s)_{\vert\Sigma}=a_{\vert\Sigma}$.

Setting $B = \frac{1}{2\pi}\int_0^{2\pi} B_s\, ds\in\Psi^0(M)$, we have $\sigma_P(B)_{\vert\Sigma} = a_{\vert\Sigma}$.
By Theorem \ref{thm_quantum_normal_form}, we have $\exp(2i\pi\Omega)=\mathrm{id} \mod \Psi^{-\infty}(M)$, and thus $B_{2\pi}=B_0 \mod \Psi^{-\infty}(M)$.
Now, since $\frac{d}{ds}B_s = i[\Omega,B_s]$, integrating over $[0,2\pi]$ yields $[B,\Omega]=0\mod \Psi^{-\infty}(M)$.
\end{proof}


\subsection{The main lemma playing the role of an infinitesimal Egorov theorem} \label{main:lemma}

Recall that $(\Sigma, \omega_{\vert\Sigma})$ is a symplectic manifold. We denote by $\{\ ,\ \}_{\omega_{\vert\Sigma}}$ the corresponding Poisson bracket on that manifold.
Hereafter, we use the Hamiltonian function $\rho={h_Z}_{\vert\Sigma}$ on $\Sigma$, as defined in Section \ref{sec_reeb}.

The following lemma may be seen as a substitute for the invariance properties (infinitesimal Egorov theorem) with respect to the geodesic flow, that are used in the proof of the classical Shnirelman theorem.

\begin{lemma}\label{lemcrochetnul} 
Let $Q\in\Psi^0(M)$ be such that $\sigma_P(Q)_{\vert\Sigma} = \{ a_{\vert\Sigma} , \rho \}_{\omega_{\vert\Sigma}}$ for some $a\in \mathcal{S}^0(M)$. Then $V(Q)=0$.
\end{lemma}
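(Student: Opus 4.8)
The plan is to show that the quantity $\langle Q\phi_n,\phi_n\rangle$ is, up to negligible errors, a \emph{time-averaged commutator} of a bounded operator with the wave-type propagator generated by $R\Omega$, and that such averaged commutators contribute nothing to $V$. First I would use Corollary \ref{cor_weyl}: since $V(Q)$ depends only on $\sigma_P(Q)_{|\Sigma}$, I may replace $Q$ by any convenient operator with the same restriction to $\Sigma$. By a partition of unity subordinate to normal charts (Theorem \ref{thm_quantum_normal_form}), and splitting into the parts microlocally supported near $\Sigma^+$ and near $\Sigma^-$ (the part away from $\Sigma$ contributes $0$ to $V$ by Corollary \ref{cor_weyl}), I reduce to the case where $Q$ is microlocally supported in a single normal chart $\tilde U$, say near $\Sigma^+$. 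There I have the factorization $-\triangle_{sR}=R\Omega+V_0+\mathrm{O}_\Sigma(\infty)$ with $[R,\Omega]=0\bmod\Psi^{-\infty}$ and $\exp(2i\pi\Omega)=\mathrm{id}\bmod\Psi^{-\infty}$.

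The key observation is that $\rho={h_Z}_{|\Sigma}$ and $\sigma_P(R)_{|\Sigma}=|h_Z|=\rho$ on $\Sigma^+$, so on $\Sigma$ the Hamiltonian flow of $\rho$ on $\Sigma$ agrees with the flow of $\sigma_P(R)$. Hence I would pick $Q$ (using the freedom above, and Lemma \ref{lem:aver} to also arrange $[Q,\Omega]=0\bmod\Psi^{-\infty}$) so that $\sigma_P(Q)_{|\Sigma}=\{a_{|\Sigma},\rho\}_{\omega_{|\Sigma}}$ is realized as the principal symbol of $\tfrac{1}{i}[A,R]$ for a suitable $A\in\Psi^0$ that commutes with $\Omega$ modulo smoothing (again by Lemma \ref{lem:aver}); indeed the principal symbol of $\tfrac1i[A,R]$ is $\{\sigma_P(A),\sigma_P(R)\}_\omega$, which restricts on $\Sigma$ to $\{a_{|\Sigma},\rho\}_{\omega_{|\Sigma}}$ because $\Sigma$ is symplectic and $\sigma_P(R)_{|\Sigma}=\rho$. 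Then, writing $P(t)=\exp(-itR)$ and using that $A$ commutes with $\Omega$ and $R$ commutes with $\Omega$ (so all the relevant operators commute with $\Omega$ up to smoothing),
\begin{equation*}
\frac{d}{dt}\big(P(t)^\star A\, P(t)\big)=i\,P(t)^\star[R,A]\,P(t).
\end{equation*}
Since $\Omega$ has discrete spectrum $\subset\mathbb{Z}+\tfrac12$ (harmonic oscillator, $\exp(2i\pi\Omega)=\mathrm{id}$) and $R\Omega=-\triangle_{sR}-V_0+\mathrm{O}_\Sigma(\infty)$, on each $\Omega$-eigenspace the operator $R$ acts, up to bounded/smoothing terms, as $(2\ell+1)^{-1}(-\triangle_{sR})$ plus a bounded operator; in particular $P(t)$ maps eigenfunctions of $\triangle_{sR}$ to (combinations controlled by a bounded perturbation), so $\langle P(t)^\star A P(t)\phi_n,\phi_n\rangle$ is, up to $\mathrm{o}(1)$ as $n\to\infty$, equal to $\langle A\phi_n,\phi_n\rangle$. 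Integrating the displayed derivative identity over a period and using periodicity then forces the Cesàro average of $|\langle [R,A]\phi_n,\phi_n\rangle|$ to vanish — this is exactly the standard Shnirelman-type argument where the half-wave propagator is replaced by $P(t)$, and $[R,A]$ plays the role of the infinitesimal Egorov bracket. Since $\tfrac1i[R,A]$ and $Q$ have the same restriction to $\Sigma$, Corollary \ref{cor_weyl} gives $V(Q)=V(\tfrac1i[R,A])=0$.

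The main obstacle is making rigorous the claim that conjugation by $P(t)=\exp(-itR)$ is ``almost invariant'' on the eigenbasis: $R$ is a genuine first-order operator, not a function of $\triangle_{sR}$ alone, so $P(t)$ does not preserve eigenspaces of $\triangle_{sR}$. The way around this is the factorization and the $\Omega$-decomposition: restricted to the $\ell$-th ``Landau band'' $\{\Omega=2\ell+1\}$ one has $R=(2\ell+1)^{-1}(-\triangle_{sR}-V_0)+\mathrm{O}_\Sigma(\infty)$, so on this band $P(t)$ is, modulo bounded operators, a rescaled wave propagator of $\triangle_{sR}$; the bounded perturbation $V_0$ and the band-dependence are handled by the averaging Lemma \ref{lem:aver} (which allows us to assume everything commutes with $\Omega$) and by the same Koopman–von Neumann/Cesàro bookkeeping used elsewhere in the paper. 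One must also check that the $\mathrm{O}_\Sigma(\infty)$ remainders in \eqref{quantum_normal_form} and in $[R,\Omega]=0$, $\exp(2i\pi\Omega)=\mathrm{id}$ produce only contributions with vanishing variance — this follows from Corollary \ref{cor_weyl}\,(1) since their principal symbols vanish on $\Sigma$ — and that all constructions take place inside the fixed normal chart, which is legitimate because $\sigma_P(\Omega)=\mathrm{O}_\Sigma(2)$ makes $\Sigma$ pointwise fixed by $\exp(s\vec w)$, as in the proof of Lemma \ref{lem:aver}.
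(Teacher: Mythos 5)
Your reduction is exactly the one the paper uses: partition of unity into normal charts, realizing $\sigma_P(Q)_{\vert\Sigma}$ as the principal symbol of $\tfrac1i[A,R]$ restricted to $\Sigma$, and invoking Lemma \ref{lem:aver} to replace $A$ by a $B$ commuting with $\Omega$ modulo $\Psi^{-\infty}$, so that by Corollary \ref{cor_weyl} everything hinges on $V([B,R])=0$. From that point on, however, your argument diverges and has a genuine gap. The step you yourself flag as ``the main obstacle'' --- that $\langle P(t)^\star A P(t)\phi_n,\phi_n\rangle=\langle A\phi_n,\phi_n\rangle+\mathrm{o}(1)$ for $P(t)=\exp(-itR)$ --- is precisely what needs proof, and the proposed repair via Landau bands does not work: the eigenfunctions $\phi_n$ of $\triangle_{sR}$ are \emph{not} localized in $\Omega$-eigenspaces, because $\triangle_{sR}=R\Omega+V_0+\mathrm{O}_\Sigma(\infty)$ and $V_0$ does not commute with $\Omega$; only $R$ and (after averaging) $B$ do. So ``restricted to the $\ell$-th band, $R$ is a rescaled $\triangle_{sR}$'' cannot be applied to $\phi_n$, and there is no sense in which $P(t)$ nearly preserves the spectral decomposition of $\triangle_{sR}$. (There is also the secondary issue that $R$ is only defined microlocally in $\tilde U$, so $\exp(-itR)$ is not an honest unitary group, and the assertion that $\mathrm{Spec}(\Omega)\subset\mathbb{Z}+\tfrac12$ contradicts $\exp(2i\pi\Omega)=\mathrm{id}$; the spectrum is the odd integers.)

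The paper avoids propagators entirely by an infinitesimal, purely algebraic version of your idea. Writing
$$
\left\langle [B,R]\phi_n,\phi_n\right\rangle=\frac{1}{\lambda_n}\left\langle BR\phi_n,-\triangle_{sR}\phi_n\right\rangle-\frac{1}{\lambda_n}\left\langle RB(-\triangle_{sR})\phi_n,\phi_n\right\rangle
$$
and substituting $-\triangle_{sR}=R\Omega+V_0+C$ with $C=\mathrm{O}_\Sigma(\infty)$, the leading term becomes $\lambda_n^{-1}\langle[\Omega,RBR]\phi_n,\phi_n\rangle=\mathrm{o}(1)$ since $R$ and $B$ commute with $\Omega$ modulo smoothing; the $V_0$-term is $\mathrm{O}(\lambda_n^{-1})$ because the relevant operators are bounded; and the flat term $C$ is handled by factoring $C[B,R]+[C,RB]=-D_1\triangle_{sR}$ with $D_1\in\Psi^0$ flat along $\Sigma$ (Lemma \ref{lemm:facto}), whence $V(D_1)=0$ by Corollary \ref{cor_weyl}. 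If you want to keep a propagator-flavored proof you would essentially have to re-derive this identity as the $t$-derivative at $t=0$ of your averaged quantity, at which point the exponentiation buys nothing; I recommend replacing the second half of your argument by the direct computation above.
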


\begin{proof}
By using a partition of unity, without loss of generality, we assume that the support of $a$ is contained in a normal chart $U$ near $\Sigma _+$. Setting $A=\Op(a)$, using \eqref{symbR} and Lemma \ref{lemmaReeb}, we have
\begin{multline*}
\sigma_P([A,R])_{\vert\Sigma} = \frac{1}{i} \left( \{ a,  h_Z+\mathrm{O}_\Sigma(2) \}_\omega \right)_{\vert\Sigma}
= \frac{1}{i} \left(\{ a, h_Z \}_\omega\right)_{\vert\Sigma}
= \frac{1}{i} \left( da . \vec h_Z \right)_{\vert\Sigma} \\
= \frac{1}{i}  da_{\vert\Sigma} . \vec \rho 
= \frac{1}{i} \{ a_{\vert\Sigma} , \rho \}_{\omega_{\vert\Sigma}}
= \frac{1}{i} \sigma_P(Q)_{\vert\Sigma} ,
\end{multline*}
and therefore $Q=i[A,R]+C_0 \mod  \Psi^{-1}$ with $\sigma _p (C_0)=0$ on $\Sigma $.

By Lemma \ref{lem:aver} that there exists $B\in\Psi^0(M)$, microlocally supported in $U$, such that
 $\sigma_P(B)_{\vert\Sigma}=\sigma_P(A)_{\vert\Sigma}$ and $[B,\Omega]= 0 \mod \Psi^{-\infty}(M)$.
Therefore $Q=i[B,R]+C_1 \mod \Psi^{-1}$ with $\sigma _p (C_1)=0$ on $\Sigma $.

By Corollary \ref{cor_weyl}, $V(Q)$ depends only of $\sigma_P(Q)_{\vert\Sigma}$, and moreover, since any pseudo-differential operator of negative order is compact, we get, by Lemmas \ref{remCS} and \ref{lemm:compact}, that $V(Q)=V([B,R])$.
As a consequence, in order to prove the lemma, it suffices to prove that $V([B,R])=0$.

To this aim, let us estimate each term $\left\langle [B,R]\phi_n,\phi_n\right\rangle$, for every $n\in\N^*$. We have
\begin{equation*}
\begin{split}
\left\langle [B,R]\phi_n,\phi_n\right\rangle
&= \left\langle BR\phi_n,\phi_n\right\rangle - \left\langle RB\phi_n,\phi_n\right\rangle \\
&= \frac{1}{\lambda_n}\left\langle BR\phi_n,-\triangle_{sR}\phi_n\right\rangle - 
\frac{1}{\lambda_n}\left\langle
 RB(-\triangle_{sR})\phi_n,\phi_n\right\rangle ,
\end{split}
\end{equation*}
because $-\triangle_{sR}\phi_n=\lambda_n\phi_n$.
 Now, using \eqref{quantum_normal_form}, we have $-\triangle_{sR}=R\Omega+V_0+ C$, with $V_0\in\Psi^0(M)$ and $C=\mathrm{O}_\Sigma(\infty)$. 
Note that $C\in\Psi^2(M)$ and that $C$ is self-adjoint (but we cannot say that $C\in\Psi^1(M)$ because of the remainder term $h$ in \eqref{symboleDelta}).
It follows that
$$
\left\langle [B,R]\phi_n,\phi_n\right\rangle
= I_n+J_n+K_n ,
$$
with
\begin{equation*}
\begin{split}
I_n &= \frac{1}{\lambda_n}\left\langle BR\phi_n,R\Omega\phi_n\right\rangle - \frac{1}{\lambda_n}\left\langle RBR\Omega\phi_n,\phi_n\right\rangle ,\\
J_n &= \frac{1}{\lambda_n}\left\langle BR\phi_n,V_0\phi_n\right\rangle - \frac{1}{\lambda_n}\left\langle RBV_0\phi_n,\phi_n\right\rangle , \\
K_n &= \frac{1}{\lambda_n}\left\langle BR\phi_n,C\phi_n\right\rangle - \frac{1}{\lambda_n}\left\langle RBC\phi_n,\phi_n\right\rangle .
\end{split}
\end{equation*}
Since $R$ and $\Omega$ are self-adjoint, we have
\begin{equation*}
I_n = \frac{1}{\lambda_n}\left\langle \Omega RBR\phi_n,\phi_n\right\rangle -
 \frac{1}{\lambda_n}\left\langle RBR\Omega\phi_n,\phi_n\right\rangle 
= \frac{1}{\lambda_n}\left\langle [ \Omega, RBR]\phi_n,\phi_n\right\rangle .
\end{equation*}
Since $[R,\Omega]= 0 \mod\Psi^{-\infty}(M)$ and $[B,\Omega]= 0 \mod\Psi^{-\infty}(M)$, we infer that $[ \Omega, RBR] = 0 \mod\Psi^{-\infty}(M) $, and hence $I_n = \mathrm{o}(1)$ as $n\rightarrow+\infty$.

Let us now focus on the second term. Since $V_0$ is self-adjoint, this term can be written as
$$
J_n = \frac{1}{\lambda_n} \left\langle V_0[B,R]\phi_n,\phi_n\right\rangle +
 \frac{1}{\lambda_n} \left\langle [V_0,RB]\phi_n,\phi_n\right\rangle .
$$
The two pseudo-differential operators $V_0[B,R]$ and $[V_0,RB]$ are of order $0$ and therefore are bounded. Since $\lambda_n\rightarrow +\infty$ as $n\rightarrow+\infty$, it follows that $J_n = \mathrm{o}(1)$ as $n\rightarrow+\infty$.

Finally, the third term can be written as
$$
K_n = \frac{1}{\lambda_n}\left\langle (C[B,R] + [C, R B] )\phi_n,\phi_n\right\rangle = 
 \frac{1}{\lambda_n}\left\langle D\phi_n,\phi_n\right\rangle,
$$
with $D = C [B,R] + [C, R B]$. Clearly, we have $D\in\Psi^2(M)$ and $D=\mathrm{O}_\Sigma(\infty)$.
Therefore, by Lemma \ref{lemm:facto} (see Appendix \ref{app:flat}), there exists $D_1\in\Psi^0(M)$, with $D_1=\mathrm{O}_\Sigma(\infty)$, such that $D = -D_1\triangle_{sR}$ modulo a smoothing operator.
It follows from this factorization that
$$
K_n = \left\langle D_1\phi_n,\phi_n\right\rangle + \mathrm{o}(1) ,
$$
as $n\rightarrow+\infty$, with $\sigma_P(D_1)_{\vert\Sigma}=0$.

We conclude from the study of these three terms that
$$
\left\langle [B,R]\phi_n,\phi_n\right\rangle
= \left\langle D_1\phi_n,\phi_n\right\rangle + \mathrm{o}(1),
$$
as $n\rightarrow+\infty$, and hence $V( [ B , R ] ) = V(D_1)$.
Since $D_1\in\Psi^0(M)$ has a principal symbol vanishing along $\Sigma$, it follows from Corollary \ref{cor_weyl} that $V(D_1)=0$. The conclusion follows.
\end{proof}


\subsection{Proof of Proposition \ref{prop:pretheo}}\label{sec_proof_prop:pretheo}
Let $A\in\Psi^0(M)$ whose principal symbol $a$ vanishes on $\Sigma ^-$. The objective is to prove that $V(A-\hat A_+)=0$. 

Let $(a_t)_{t\in\R}$ be a family of elements of $\mathcal{S}^0(M)$, depending smoothly on $t$, such that 
$$
{a_t}_{\vert\Sigma} = a_{\vert\Sigma}\circ \exp(t\vec\rho).
$$ 
We set $A_t=\Op(a_t)$, for every $t\in\R$, and we set $\bar A_T=\frac{1}{T}\int_0^T A_t\, dt$, for every $T>0$.
The principal symbol $\bar a_T\in\mathcal{S}^0(M)$ of $\bar A_T$ is $\bar a_T = \frac{1}{T}\int_0^T a_t\, dt$.

In order prove that $V(A-\hat A_+)=0$, we proceed in two steps:
\begin{enumerate}
\item Prove that $V(A-A_t)=0$ for every time $t$, and hence that $V(A-\bar{A}_T )=0$ (this step does not require any ergodicity assumption);
\item Using the ergodicity of the Reeb flow and the Von Neumann mean ergodic theorem, prove that $\displaystyle\lim_{T\rightarrow +\infty} V(\bar{A}_T -\hat A_+)=0$. 
\end{enumerate}

\paragraph{First step: $V(A-\bar A_T)=0$.} 

\begin{lemma}\label{lem14}
For every $t\in\R$, we have $V\left(\frac{d}{dt}A_t \right)=0$.
\end{lemma}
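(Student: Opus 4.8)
The plan is to compute $\frac{d}{dt}A_t$ at the level of principal symbols restricted to $\Sigma$, recognize it as a Poisson bracket with $\rho$, and then invoke the main lemma (Lemma \ref{lemcrochetnul}). First I would differentiate the defining relation ${a_t}_{\vert\Sigma} = a_{\vert\Sigma}\circ\exp(t\vec\rho)$ in $t$. Since $\exp(t\vec\rho)$ is the Hamiltonian flow of $\rho$ on the symplectic manifold $(\Sigma,\omega_{\vert\Sigma})$, differentiating gives
\[
\frac{d}{dt}\big({a_t}_{\vert\Sigma}\big) = \big(\{a_{\vert\Sigma},\rho\}_{\omega_{\vert\Sigma}}\big)\circ\exp(t\vec\rho) = \{ {a_t}_{\vert\Sigma}, \rho \}_{\omega_{\vert\Sigma}},
\]
where the last equality uses that $\rho$ is invariant under its own flow, so that the Poisson bracket commutes with composition by $\exp(t\vec\rho)$. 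Now $\frac{d}{dt}A_t$ is a pseudo-differential operator of order $0$ (the family $a_t$ depends smoothly on $t$, so $\frac{d}{dt}a_t\in\mathcal{S}^0$ and $\frac{d}{dt}A_t = \Op\big(\frac{d}{dt}a_t\big)$ up to the usual conventions), and its principal symbol restricted to $\Sigma$ equals $\{ {a_t}_{\vert\Sigma}, \rho \}_{\omega_{\vert\Sigma}}$.

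At this point the statement follows immediately: setting $Q = \frac{d}{dt}A_t$, we have $\sigma_P(Q)_{\vert\Sigma} = \{ {a_t}_{\vert\Sigma},\rho\}_{\omega_{\vert\Sigma}}$, which is exactly the hypothesis of Lemma \ref{lemcrochetnul} with $a$ replaced by $a_t$ (note $a_t\in\mathcal{S}^0$). Hence $V\big(\frac{d}{dt}A_t\big)=V(Q)=0$, as claimed. One small point to be careful about is that Lemma \ref{lemcrochetnul} only constrains $\sigma_P(Q)_{\vert\Sigma}$, and by Corollary \ref{cor_weyl} the variance $V$ depends only on the principal symbol along $\Sigma$; so it does not matter which representative $\Op(\cdot)$ we pick, nor how $\frac{d}{dt}A_t$ behaves off $\Sigma$.

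The only genuine subtlety — and what I would expect to be the main (minor) obstacle — is justifying that $t\mapsto A_t$ is differentiable in a sense strong enough that $\frac{d}{dt}A_t$ is again in $\Psi^0$ with principal symbol $\frac{d}{dt}a_t$, uniformly in $t$ on compact intervals. This is handled by choosing the family $(a_t)$ to depend smoothly on $t$ in the symbol topology of $\mathcal{S}^0$ (which is possible since $a_{\vert\Sigma}\circ\exp(t\vec\rho)$ is smooth in $t$ and one extends it off $\Sigma$ smoothly in $t$), so that differentiation commutes with quantization and $\frac{d}{dt}\Op(a_t)=\Op\big(\frac{d}{dt}a_t\big)$. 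With that in hand, the argument above is complete. (The consequence $V(A-\bar A_T)=0$ then follows by writing $A-\bar A_T = \frac{1}{T}\int_0^T (A-A_t)\,dt$ and $A-A_t = -\int_0^t \frac{d}{ds}A_s\,ds$, combined with the fact from Lemma \ref{remCS} that $\{A\in\Psi^0 \mid V(A)=0\}$ is a linear subspace, together with a continuity/approximation argument for the integral — but this is part of the first step of the proof of Proposition \ref{prop:pretheo}, not of Lemma \ref{lem14} itself.)
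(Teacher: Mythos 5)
Your proposal is correct and follows exactly the paper's argument: differentiate ${a_t}_{\vert\Sigma}=a_{\vert\Sigma}\circ\exp(t\vec\rho)$ to identify $\sigma_P\bigl(\tfrac{d}{dt}A_t\bigr)_{\vert\Sigma}$ as a Poisson bracket with $\rho$ on $(\Sigma,\omega_{\vert\Sigma})$, then apply Lemma \ref{lemcrochetnul} (with $a$ replaced by $a_t$, which is legitimate since $a_t\in\mathcal{S}^0$). The additional remarks on smooth dependence in $t$ and on the fact that $V$ only sees the principal symbol along $\Sigma$ are consistent with the paper's (more terse) treatment.
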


\begin{proof}
By definition of $A_t$, we have $\sigma_P\left( \frac{d}{dt} A_t \right)_{\vert\Sigma} = \{a_{\vert\Sigma} \circ\exp(t\vec\rho),\rho\}_{\omega_{\vert\Sigma}}$, and then the result follows from Lemma \ref{lemcrochetnul}.
\end{proof}

As a corollary, we have the following proposition.

\begin{proposition}\label{prop_eqAAt}
We have $V(A-A_t) = 0$ for every time $t$, and $V(A-\bar A_T)=0$, for every $T>0$.
\end{proposition}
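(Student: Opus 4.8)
The plan is to derive the proposition from Lemma \ref{lem14} by an elementary integration-and-averaging argument, using only the smoothness of the family $(a_t)$ and the properties of $V$ already recorded in Lemma \ref{remCS}.

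First I would fix $t\ge 0$ (the case $t<0$ being symmetric) and write $A-A_t=-\int_0^t\frac{d}{ds}A_s\,ds$, noting that $\frac{d}{ds}A_s=\Op\bigl(\frac{d}{ds}a_s\bigr)$ is a continuous family of operators of order $0$, hence norm-bounded on the compact interval $[0,t]$ by some constant $C_t$ (standard pseudo-differential calculus). Testing against an eigenfunction $\phi_n$ and applying the Cauchy--Schwarz inequality in the $s$ variable gives
$$\bigl\vert\langle(A-A_t)\phi_n,\phi_n\rangle\bigr\vert^2\ \le\ t\int_0^t\Bigl\vert\bigl\langle\tfrac{d}{ds}A_s\,\phi_n,\phi_n\bigr\rangle\Bigr\vert^2\,ds ,$$
for every $n\in\N^*$; averaging over $\{n:\lambda_n\le\lambda\}$ and using Fubini then yields
$$\frac{1}{N(\lambda)}\sum_{\lambda_n\le\lambda}\bigl\vert\langle(A-A_t)\phi_n,\phi_n\rangle\bigr\vert^2\ \le\ t\int_0^t g_\lambda(s)\,ds ,\qquad g_\lambda(s):=\frac{1}{N(\lambda)}\sum_{\lambda_n\le\lambda}\Bigl\vert\bigl\langle\tfrac{d}{ds}A_s\,\phi_n,\phi_n\bigr\rangle\Bigr\vert^2 .$$
Each $g_\lambda$ is continuous in $s$ and satisfies $0\le g_\lambda(s)\le\bigl\Vert\tfrac{d}{ds}A_s\bigr\Vert^2\le C_t$, so the reverse Fatou lemma applies on $[0,t]$; combined with Lemma \ref{lem14} it gives
$$V(A-A_t)\ \le\ t\limsup_{\lambda\to\infty}\int_0^t g_\lambda(s)\,ds\ \le\ t\int_0^t\limsup_{\lambda\to\infty}g_\lambda(s)\,ds\ =\ t\int_0^t V\!\Bigl(\tfrac{d}{ds}A_s\Bigr)\,ds\ =\ 0 ,$$
which is the first assertion.

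For the second assertion I would run the same computation on $A-\bar A_T=\frac1T\int_0^T(A-A_t)\,dt$: Cauchy--Schwarz and Fubini give $\frac{1}{N(\lambda)}\sum_{\lambda_n\le\lambda}\vert\langle(A-\bar A_T)\phi_n,\phi_n\rangle\vert^2\le\frac1T\int_0^T h_\lambda(t)\,dt$ with $h_\lambda(t):=\frac{1}{N(\lambda)}\sum_{\lambda_n\le\lambda}\vert\langle(A-A_t)\phi_n,\phi_n\rangle\vert^2$, and since $h_\lambda$ is continuous and bounded uniformly in $\lambda$ on $[0,T]$ (again by norm-continuity of $t\mapsto A_t$), reverse Fatou together with the identity $V(A-A_t)=0$ just proved gives $V(A-\bar A_T)=0$. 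I do not anticipate any real obstacle here; the only point that deserves a word is the uniform bound $C_t<\infty$, i.e. the norm-continuity of $s\mapsto\frac{d}{ds}A_s$, which is precisely what legitimizes the use of the \emph{reverse} Fatou lemma. An entirely equivalent route would be to observe that $\{C\in\Psi^0:V(C)=0\}$ is a linear subspace (Lemma \ref{remCS}) that is closed under operator-norm limits — because $V(C)\le\Vert C\Vert^2$ and $V(C)\le 2\bigl(V(C-C_k)+V(C_k)\bigr)$ — and then approximate the integrals above by operator-norm-convergent Riemann sums, each term of which has vanishing variance by Lemma \ref{lem14}.
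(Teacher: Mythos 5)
Your argument is essentially the paper's own proof: write $A-A_t$ as the integral of $\frac{d}{ds}A_s$, apply Cauchy--Schwarz (resp.\ Jensen) in the time variable, sum over $n$, and invoke Lemma \ref{lem14}. The only difference is that you explicitly justify interchanging the $\limsup_\lambda$ with the time integral via the reverse Fatou lemma and the uniform norm bound on $s\mapsto\frac{d}{ds}A_s$ --- a point the paper leaves implicit --- so your write-up is correct and, if anything, slightly more careful.
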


\begin{proof} 
We start from 
$$
\langle (A-A_t)\phi_n ,\phi_n \rangle =-  \int_0^t \left\langle \frac{dA_s}{ds} \phi_n ,\phi_n \right\rangle \, ds ,
$$
for every time $t$, and hence, by the Cauchy-Schwarz inequality,
$$
\langle (A-A_t)\phi_n ,\phi_n \rangle ^2 \leq  t\int_0^t \left\vert \left\langle   \frac{dA_s}{ds} \phi_n ,\phi_n \right\rangle \right\vert^2 ds .
$$
Summing with respect to $n$, 
we get that $V(A-A_t)\leq t\int_0^t V\left(\frac{d}{ds}A_s\right)\, ds$. By Lemma \ref{lem14}, we infer that $V(A-A_t)=0$, for every $t\in\R$.
  
Let us now prove that $V(A-\bar A_T)=0$, with $\bar A_T=\frac{1}{T}\int_0^T A_t\, dt$. We have, by the Fubini theorem,
$$
\left\langle (A-\bar A_T)\phi_n ,\phi_n \right\rangle
= \frac{1}{T} \int_0^T \left\langle (A-A_t)\phi_n , \phi_n \right\rangle dt ,
$$
for every integer $n$. Using again the Jensen inequality, and summing with respect to $n$, we get that $V(A-\bar A_T) \leq \frac{1}{T} \int_0^T V(A-A_t) \, dt$.
It follows that $V(A-\bar A_T) = 0$ .
\end{proof}

\paragraph{Second step: $V(\bar A_T-\hat A_+)\rightarrow 0$ as $T\rightarrow +\infty$.}
Here, we are going to use the ergodicity assumption on the Reeb flow. 

Using \eqref{ineqV}, we have $V(\bar A_T-\hat A_+)\leq E((\bar A_T-\hat A_+)^\star (\bar A_T -\hat A_+))$,
 and it follows from Theorem \ref{theo:weyl} (microlocal Weyl law) that
$$
E((\bar A_T-\hat A_+)^\star (\bar A_T -\hat A_+))= \frac{1}{2} \int_{\Sigma _1} |\bar a_T -\hat a_+|^2 \, d\hat\nu_1 ,
$$
with $(\bar a_T)_{\vert\Sigma}=\frac{1}{T}\int_0^T a_{\vert\Sigma}\circ \exp(t\vec \rho )\, dt$ and $\hat a_+$ defined by \eqref{defhataplus}.
Since the flow $\exp(t\vec \rho )$ (which is the lift to $\Sigma _1$ of the flow of $Z$, by Lemma \ref{lemmaReeb}) is ergodic on $(\Sigma_1,\hat\nu_1)$, it follows from the Von Neumann mean ergodic theorem (see, e.g., \cite{Petersen}) that $\bar a_T$ converges to $\hat a_+$ in $L^2(\Sigma_1,\hat\nu_1)$ as $T\rightarrow +\infty$.
Therefore $V(\bar A_T-\hat A_+)$  converges to $0$ as $T\rightarrow+\infty$.

\medskip

Using the inequality $V(A-\hat A_+)\leq 2 ( V(A-\bar A_T) + V(\bar A_T-\hat A_+) )$, and the results of the two steps above, we conclude the proof of Proposition \ref{prop:pretheo}.


\section{Proof of Theorem \ref{thm2}}\label{sec_proof_thm2}

Let us prove the first part of Theorem \ref{thm2}.
We set $\mu_n(a) = \left\langle \Op(a)\phi_n, \phi_n\right\rangle$, for every $n\in\N^*$ and every $a\in\mathcal{S}^0(M)$. 

The cosphere bundle $S^\star M$ is identified, by taking intersections with half lines, with the compactification $\widehat{U^\star M}$
 of the unit cotangent bundle $U^\star M=\{ g^\star =1 \}$, as follows: we add to each cylinder $U^\star_q M$, homeomorphic
 to $\mathbb{S}^2 \times \R$, its two extremities that we identify with $S\Sigma _q \sim (\Sigma _1 \cup \Sigma _{-1})_q$,
 obtaining
$$
\widehat{U^\star M} = U^\star M \cup \Sigma _1 \cup \Sigma _{-1} .
$$
Let $\beta$ be a QL. By definition, $\beta$ is a probability measure on
 $\widehat{U^\star M}$, and there exists a sequence of integers $(n_j)_{j\in\N^*}$ such that $\mu_{n_j}$ converges weakly to $\beta$. 
The measure $\beta $ is then decomposed in a unique way as the sum $\beta = \beta_0 + \beta_\infty$,
 with $\beta_0$ supported on $U^\star M$ and $\beta_\infty$ supported on $S\Sigma\sim \Sigma_1 \cup \Sigma _{-1}$:
 $\beta_0$ is the restriction of $\beta$ to compactly supported functions on ${U^\star M}$.

Let us first prove that $\beta_0$ is invariant under the sR geodesic flow.
Let $A\in\Psi^0(M)$, with principal symbol $a$, be microlocally supported away of $\Sigma.$ 
Since $\phi_{n_j}$ is an eigenfunction of $-\triangle_{sR}$ associated with the real eigenvalue $\lambda_{n_j}$, we have
\begin{equation}\label{annulnj}
\left\langle \left[\sqrt{-\triangle_{sR}},A\right]  \phi_{n_j} , \phi_{n_j} \right\rangle =0.
\end{equation}
On the microlocal support of $A$, $\sqrt{-\triangle_{sR}}$ is a pseudo-differential operator of order $1$ with principal
 symbol $\sqrt{g^\star}$ (see Remark \ref{rem:sqrt} and \cite{HV-00}). It follows that $\left[\sqrt{-\triangle_{sR}},A\right]$
 is a pseudo-differential operator of order $0$ with principal symbol $i \{ \sqrt{g^\star },a \}_\omega$.
Passing to the limit in \eqref{annulnj}, using the definition of $\beta_0$ and the fact that $a$ is supported away of $\Sigma$,
 we get $\int_{U^\star M} \{ \sqrt{g^\star} ,a \}_\omega \, d\beta_0 =0$.
Since $\{ \sqrt{g^\star} ,a \}_\omega = \vec{G}.a $ where $\vec{G}$ is the geodesic flow (which coincides with the geodesic flow generated by $g^\star$, on $\{g^\star=1\}$), the invariance of $\beta_0$ under the sR geodesic flow is inferred from the following general lemma.

\begin{lemma}\label{l:inv}
Let $N$ be a manifold, equipped with a measure $\delta$, and let $X$ be a complete vector field on $N$.
 If $\int _N (X .\phi) \, d\delta =0$ for every $\phi \in C_0^\infty (N,\R)$, then the measure $\delta$ is invariant under the flow of $X$. 
\end{lemma}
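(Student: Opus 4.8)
The plan is to prove directly that the push-forward $(\Phi_t)_\star\delta$ coincides with $\delta$ for every $t\in\R$, where $(\Phi_t)_{t\in\R}$ is the globally defined flow of $X$ (global since $X$ is complete). Since all the integrands below are compactly supported, the integrals make sense as soon as $\delta$ is locally finite (in the application $\delta=\beta_0$ is a finite measure), and by a standard density argument it suffices to establish, for every $\phi\in C_0^\infty(N,\R)$ and every $t$,
\[
F(t) := \int_N \phi\circ\Phi_t\, d\delta = \int_N \phi\, d\delta .
\]

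The first step is to check that $F$ is differentiable and to compute its derivative. For fixed $\phi$, the map $(t,x)\mapsto \phi\circ\Phi_t(x)$ is smooth, and for $t$ in a bounded interval its support in $x$ stays inside a fixed compact set (namely $\bigcup_{|s|\le T}\Phi_{-s}(\operatorname{supp}\phi)$); hence one may differentiate under the integral sign, obtaining $F'(t)=\int_N \frac{d}{dt}\big(\phi\circ\Phi_t\big)\, d\delta$. The second step is the elementary flow identity $d\Phi_t(x)\cdot X(x)=X(\Phi_t(x))$, which expresses that $\Phi_t$ commutes with the flow of $X$; it yields
\[
\frac{d}{dt}\big(\phi\circ\Phi_t\big) = (X.\phi)\circ\Phi_t = X.\big(\phi\circ\Phi_t\big).
\]
Since $\phi\circ\Phi_t$ again belongs to $C_0^\infty(N,\R)$, the hypothesis of the lemma applied to this function gives $\int_N X.\big(\phi\circ\Phi_t\big)\, d\delta=0$, so $F'(t)=0$ for all $t$.

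It follows that $F$ is constant, hence $F(t)=F(0)=\int_N\phi\, d\delta$ for all $t$, which is exactly the invariance $(\Phi_t)_\star\delta=\delta$. The only points needing a minimum of care are the justification of differentiation under the integral sign — handled by the compact-support observation above, or alternatively by dominated convergence using that $\partial_t(\phi\circ\Phi_t)$ is bounded and compactly supported locally uniformly in $t$ — and the verification of $\frac{d}{dt}(\phi\circ\Phi_t)=X.(\phi\circ\Phi_t)$; neither is a genuine obstacle. There is therefore no hard step here: the statement is a routine consequence of the definition of the flow together with the duality hypothesis, and the argument applies verbatim whether $\delta$ is finite or merely locally finite.
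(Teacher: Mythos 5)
Your argument is correct and is the standard one: the paper itself states Lemma \ref{l:inv} without proof, treating it as a routine fact, and your write-up supplies exactly the expected justification (differentiation under the integral sign via the compact-support bound, the identity $\frac{d}{dt}(\phi\circ\Phi_t)=X.(\phi\circ\Phi_t)$ coming from $d\Phi_t(x)\cdot X(x)=X(\Phi_t(x))$, and the hypothesis applied to $\phi\circ\Phi_t\in C_0^\infty$). The only implicit requirement, which you correctly flag, is that $\delta$ be a locally finite (Radon) measure so that the integrals converge and so that equality of $\int\phi\,d\delta$ over all test functions identifies the measure; this holds in the paper's application where $\delta=\beta_0$ is finite.
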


Let us now prove that $\beta_\infty$ is invariant under the lift $\exp(t\vec\rho)$ to $S\Sigma$ of the Reeb flow
 (defined in Section \ref{sec_reeb}). Using a partition of unity, we can work
in ${U}\subset \Sigma_\sigma \times \R^2_{u,v}$.
  Let $b_0$ be an arbitrary smooth function on the manifold $\Sigma_1$ with support in ${U}\cap \Sigma_1$.
By homogeneity, using Lemma \ref{lem:aver}, there exists a pseudo-differential operator $B\in\Psi^0(M)$, microlocally supported in
 ${U}$, of principal symbol $b$, such that $b_{\vert\Sigma}=b_0$ and $[B,\Omega]=0\mod\Psi^{-\infty}(M)$.
Using the estimates obtained in Section \ref{main:lemma}, and in particular in the proof of Lemma \ref{lemcrochetnul}, we have
$$
\langle [B,R]  \phi_{n_j} , \phi_{n_j} \rangle -  \langle  D_1 \phi_{n_j} ,
 \phi_{n_j} \rangle \underset{j\rightarrow +\infty}{\longrightarrow} 0 ,
$$
for some $D_1\in\Psi^0(M)$ that is flat along $\Sigma$, satisfying
\begin{equation}\label{symbD1}
i\sigma_P(D_1) \sigma_P(-\triangle_{sR}) = \sigma_P(C) \{ b, \sigma_P(R) \}_\omega + \{ \sigma_P(C), b\sigma_P(R) \}_\omega,
\end{equation}
with $C\in\Psi^2(M)$ such that $\sigma_P(C)=\mathrm{O}_\Sigma(\infty)$.
Passing to the limit, we obtain 
\begin{equation}\label{eqbetalim}
\beta \left( \{b,\sigma_P(R)\}_\omega + i \sigma_P(D_1) \right) = 0.
\end{equation}

By \eqref{symbR}, we have $\sigma_P(R)=h_Z+\mathrm{O}_\Sigma(2)$ along $\Sigma^+$, and hence, reasoning similarly as at the beginning of the proof of Lemma \ref{lemcrochetnul}, we have
\begin{equation}\label{crochetSigmaplus}
(\{ b, \sigma_P(R) \}_\omega)_{\vert\Sigma^+} = \{ b_{\vert\Sigma}, \rho  \}_{\omega_{\vert\Sigma}} = \{ b_0, \rho  \}_{\omega_{\vert\Sigma}} .
\end{equation}
Since $\sigma_P(D_1)_{\vert\Sigma}=0$, we have $\beta_\infty(\sigma_P(D_1))=0$, and since $\beta=\beta_0+\beta_\infty$, we infer from \eqref{eqbetalim}, \eqref{symbD1} and \eqref{crochetSigmaplus} that
\begin{equation*}
\beta_\infty\left( \{ b_0, \rho  \}_{\omega_{\vert\Sigma}} \right) + \beta_0\left( L(b)  \right) = 0 ,
\end{equation*}
where the operator
$$
L(b) = \left( 1+\frac{\sigma_P(C)}{\sigma_P(-\triangle_{sR})}\right) \{b,\sigma_P(R)\}_\omega + \frac{1}{\sigma_P(-\triangle_{sR})}  \{ \sigma_P(C), b\sigma_P(R) \}_\omega 
$$ 
is defined on $U^\star M$. 
We have thus proved that
\begin{equation}\label{eqlimmes}
\int _{\Sigma _1 } \{ b_{\vert\Sigma}, \rho  \}_{\omega_{\vert\Sigma}} \, d\beta_\infty + \int _{U^\star M} L(b) \, d\beta_0 = 0 ,
\end{equation}
for any symbol $b\in\mathcal{S}^0(M)$ which is microlocally supported in $\tilde{U}$ and such that $\{b,\sigma_P(\Omega)\}_\omega=0$.
Let us now prove the following lemma.

\begin{lemma}\label{lem_Lb}
We have $\int _{U^\star M} L(b) \, d\beta_0 = 0$, for every symbol $b\in\mathcal{S}^0(M)$ which is microlocally supported in $\tilde{U}$ and 
such that $\{b,\sigma_P(\Omega)\}_\omega=0$.
\end{lemma}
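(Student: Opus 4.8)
The plan is to reduce $L(b)$, on $U^\star M$, to a combination of a Poisson bracket with $g^\star$ (which $\beta_0$ annihilates by geodesic invariance) and a ``flat'' remainder, and then to deal with that remainder using the quantum normal form. First I would exploit the symbolic content of Theorem~\ref{thm_quantum_normal_form}: on a small enough normal chart one has $\sigma_P(-\triangle_{sR})=\sigma_P(R)\sigma_P(\Omega)+\mathrm{O}_\Sigma(\infty)$ by \eqref{symboleDelta}, $\{\sigma_P(R),\sigma_P(\Omega)\}_\omega=0$ since $[R,\Omega]=0\bmod\Psi^{-\infty}$, and by hypothesis $\{b,\sigma_P(\Omega)\}_\omega=0$. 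Writing $r=\sigma_P(R)$, $w=\sigma_P(\Omega)$, $c=\sigma_P(C)$ (so $g^\star=rw+c$ with $c=\mathrm{O}_\Sigma(\infty)$), and repeatedly using the Leibniz rule for $\{\,\cdot\,,\,\cdot\,\}_\omega$ together with $\{b,w\}_\omega=\{r,w\}_\omega=0$ and $g^\star=1$ on $U^\star M$, one checks the identity
\begin{equation*}
L(b)=2\,\{b,\sigma_P(R)\}_\omega-\{b\,\sigma_P(R),\,g^\star\}_\omega\qquad\text{on }U^\star M .
\end{equation*}
(On $\Sigma^+$ this restricts to $\{b_{\vert\Sigma},\rho\}_{\omega_{\vert\Sigma}}$ via \eqref{symbR} and Lemma~\ref{lemmaReeb}, which is the consistency check with \eqref{eqlimmes}.)

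Next I would use that $\beta_0$ is invariant under the sR geodesic flow (established just before the lemma) to kill the second term. For a classical symbol $F$ of order $0$, $\int_{U^\star M}\{F,g^\star\}_\omega\,d\beta_0=0$; for $F=b\,\sigma_P(R)$, which is only of order $1$ (and so unbounded near the cone), one first applies the invariance to $b\,\sigma_P(R)$ cut off away from $S\Sigma$ at scale $\varepsilon$ and lets $\varepsilon\to 0$: the transition term is supported where $|h_Z|\sim1/\varepsilon$ and stays $\mathrm{O}(1)$ there, while the $\beta_0$\nobreakdash-mass of such shrinking regions tends to $0$ because $\beta_0$ is a finite measure; and $\{b\,\sigma_P(R),g^\star\}_\omega=2\{b,\sigma_P(R)\}_\omega-L(b)$ is bounded on $U^\star M$, which legitimizes the passage to the limit in the remaining term. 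This yields $\int_{U^\star M}L(b)\,d\beta_0=2\int_{U^\star M}\{b,\sigma_P(R)\}_\omega\,d\beta_0$, so the lemma is reduced to proving $\int_{U^\star M}\{b,\sigma_P(R)\}_\omega\,d\beta_0=0$.

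The hard part
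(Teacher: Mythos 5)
Your proposal is unfinished precisely where the difficulty lies: it stops at ``the hard part'', namely $\int_{U^\star M}\{b,\sigma_P(R)\}_\omega\,d\beta_0=0$, and that residual term is not a technicality. The algebra up to that point is fine: writing $g^\star=rw+c$ with $r=\sigma_P(R)$, $w=\sigma_P(\Omega)$, $c=\sigma_P(C)$ and using $\{b,w\}_\omega=\{r,w\}_\omega=0$ together with $g^\star=1$ on $U^\star M$, one does get $L(b)=2\{b,r\}_\omega-\{br,g^\star\}_\omega$ there, and your cutoff argument for killing $\int\{br,g^\star\}_\omega\,d\beta_0$ via geodesic invariance of $\beta_0$ is plausible (the transition term is $\mathrm{O}(1)$ on a region of shrinking $\beta_0$--mass because $\{h_Z,g^\star\}_\omega=\mathrm{O}(g^\star)$). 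But the remaining integral cannot be handled by geodesic invariance: near $S\Sigma$ the Hamiltonian flow of $r\approx|h_Z|$ is essentially the lifted Reeb flow, under which $\beta_0$ has no invariance whatsoever (only $\beta_\infty$ does). If you try to convert $\{b,r\}_\omega$ back into a bracket with $g^\star$ using $r=(g^\star-c)/w$, you produce terms like $b\{1/w,c\}_\omega+\frac1w\{b,c\}_\omega$ whose integral against $\beta_0$ has no reason to vanish. In short, the vanishing of $\int_{U^\star M}L(b)\,d\beta_0$ is a genuinely \emph{quantum} fact, not a consequence of the classical invariance of $\beta_0$ alone, and your route strips out the quantum input.

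The paper's proof keeps that input: it applies the already-established identity \eqref{eqlimmes} (which encodes $\langle[B,R]\phi_{n_j},\phi_{n_j}\rangle\to0$ through the normal form) to the truncated symbols $b^k$ defined by $b^k\circ\chi_3^{-1}=(b\circ\chi_3^{-1})\,f(k(u^2+v^2))$. These still commute with $\sigma_P(\Omega)$, but vanish on $\Sigma$, so the $\beta_\infty$ term in \eqref{eqlimmes} drops and one gets $\int_{U^\star M}L(b^k)\,d\beta_0=0$ for every $k$; the lemma then follows from $L(b^k)\to L(b)$ with dominated convergence (the only delicate term, $kf'(k(u^2+v^2))\{\sigma_P(C)\circ\chi_3^{-1},u^2+v^2\}_{\tilde\omega}\,s\,b\circ\chi_3^{-1}$, is $\mathrm{O}(1/k)$ because $C$ is flat along $\Sigma$). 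To repair your argument you would need either to reproduce this truncation-plus-\eqref{eqlimmes} step, or to independently establish $\int_{U^\star M}\{b,\sigma_P(R)\}_\omega\,d\beta_0=0$ --- which, as far as I can see, again requires going back to the quantum relation. As written, the proposal has a genuine gap.
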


\begin{proof}[Proof of Lemma \ref{lem_Lb}.] 
Recall that $\sigma_P(R) = \rho$ and that $\sigma_P(\Omega) = I$. 

The condition $\{b,\sigma_P(\Omega)\}=0$ gives $\{b,I \}=0$.
Now, we choose an arbitrary smooth function $f$ satisfying $f(0)=0$ and $f(t)=1$ for $t\geq 1$, and for every integer $k$ we define $b^k$ by 
$$
b^k = b f(kI).$$
We obtain a sequence $(b^k)_{k\in\N^*}$ of symbols such that $\{b^k,I \}=0$, 
for every $k\in\N^*$ (because $\{f(kI), I\}=0$). 
By definition of $f$, we have $b^k_{\vert\Sigma}=0$ and thus 
$\int _{\Sigma _1 } \{ b^k_{\vert\Sigma}, \rho  \}_{\omega_{\vert\Sigma}} \, d\beta_\infty=0$. 
Then, we infer from \eqref{eqlimmes} that 
\begin{equation}\label{eqLk}
\int _{U^\star M} L(b^k) \, d\beta_0 = 0,
\end{equation}
for every $k\in\N^*$.

Let us now prove that $L(b^k)$ converges pointwise to $L(b)$ in $U^\star M$ and is uniformly bounded.
Since the coordinates $\sigma=(q,s)$ and $(u,v)$ are symplectically orthogonal, we have
$$
\{ b^k,\rho \}_\omega
= \{ b f(kI ) , \rho  \}
= f(kI) \{ b , \rho  \},
$$
and hence $\{ b^k,\rho \}_\omega$ converges pointwise to $\{ b,\rho \}$, and is uniformly bounded. Besides, we have
\begin{equation*}
\begin{split}
\{ \sigma_P(C), b^k\rho  \} & =\, \{ \sigma_P(C) , f(kI) \rho \, b \}_{\tilde\omega} \\
&=\,  f(kI) \{ \sigma_P(C) , \rho \, b \} +  \{ \sigma_P(C) , f(kI) \} \, \rho \, b \\
&=\,  f(kI) \{ \sigma_P(C) , \rho \, b \} + kf'(kI) \{ \sigma_P(C) , I \} \, \rho \, b .
\end{split}
\end{equation*}
In the latter line, the first term raises no problem and converges pointwise with dominated convergence. 
The second term needs more care. Since $C$ is flat on $\Sigma$, we have in particular 
that $\{ \sigma_P(C) , I \} \leq cI^3$ for some constant $c>0$. 
By definition of $f$, if $I\geq 1/k$ then
$f'(kI)=0$, and if $I\leq 1/k$ then
$$
kf'(kI) \{ \sigma_P(C) , I \}
\leq c \Vert f'\Vert_\infty k I^3 \leq \frac{c \Vert f'\Vert_\infty I}{k} ,
$$
and therefore, the second term converges pointwise to $0$ with a dominated convergence.
We conclude that $L(b^k)$ converges pointwise to $L(b)$ with a dominated ($L^1$) convergence.

Using \eqref{eqLk}, the lemma follows by applying the Lebesgue dominated convergence theorem.
\end{proof}

Using \eqref{eqlimmes}, it follows from Lemma \ref{lem_Lb} that $\beta_\infty\left( \{ b_0, \rho  \}_{\omega_{\vert\Sigma}} \right) = 0$
 for every classical symbol $b_0$ of order $0$ on $\Sigma_1$.
Reasoning similarly on $\Sigma_{-1}$, we have thus proved that $\int _{S\Sigma}\{ b_0, \rho \}_{\omega_{\vert\Sigma}} \, d\beta_\infty =0$
 for any classical symbol $b_0$ of order $0$ on the manifold $S\Sigma$. The invariance property follows from Lemma \ref{l:inv} as before.

\medskip

Let us now prove the second part of Theorem \ref{thm2}.

Let $\mathcal{D}^0_\Sigma$ be a countable dense subset of the set of $a\in\mathcal{S}^0(M)$ such that $a_{\vert\Sigma}=0$. By Corollary \ref{cor_weyl}, we have $V(\Op(a))=0$, for every $a\in \mathcal{D}^0_\Sigma$. Using the lemma of Koopman and Von Neumann (already mentioned in Section \ref{sec1}), and using a diagonal argument, we infer that there exists a sequence $(n_j)_{j\in\N^*}$ of integers of density one such that $\mu_{n_j}(a)\rightarrow 0$ as $j\rightarrow +\infty$, for every $a\in \mathcal{D}^0_\Sigma$.
It follows that, for every quantum limit $\beta$ associated with the family $(\phi_{n_j})_{j\in \N^*}$, we have $\beta(a)=0$, for every $a\in \mathcal{D}^0_\Sigma$. By density, we infer that $\beta(a)=0$, for every $a\in \mathcal{S}^0(M)$ such that $a_{\vert\Sigma}=0$. We have thus proved that the support of $\beta$ is contained in $S\Sigma$.


\section{Complex-valued eigenbasis, and the non-orientable case} \label{sec:non-or}

\paragraph{Complex-valued eigenbasis, with $D$ oriented.}
We can extend Theorem \ref{thm1} to the case of a complex-valued eigenbasis, to the price of requiring that the principal symbol $a$ of $A$ satisfies the evenness condition
\begin{equation}\label{apair}
a(q,\alpha_g(q)) = a(q,-\alpha_g(q)),
\end{equation}
for every $q\in M$. The proof is the same.

\paragraph{$D$ not orientable.}
Let us assume that the subbundle $D$ is not orientable. Then there exists a double covering $\tilde{M}$ of $M$ with an involution $J$, so that we can lift all data to $\tilde{M}$, and then the subbundle $\tilde{D}$ of $T\tilde{M}$ is orientable.
The Reeb vector field $\tilde Z$ on $\tilde{M}$ is odd with respect to the involution.

\begin{definition}
The Reeb dynamics are ergodic if every measurable subset of $\tilde{M}$ which is invariant under $\tilde{Z}$ and invariant under $J$ is of measure $0$ or $1$.
\end{definition}

\begin{theorem}\label{thm8.1}
We assume that the Reeb dynamics are ergodic. Then we have $QE$ for any eigenbasis of $\triangle_{sR}$.
\end{theorem}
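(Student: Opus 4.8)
The plan is to reuse, almost word for word, the entire machinery of Sections \ref{sec:micWeyl}--\ref{sec_variance} on $M$ itself --- none of it requires $D$ to be orientable --- and to replace only the final, ergodic, averaging step. First I would record the geometric picture. Since $D$ is not orientable, the double covering $S\Sigma\to M$ is connected and is exactly the cover $p\colon\tilde M\to M$ with deck involution $J$ of Appendix \ref{app:non-or}; under this identification the flow $\exp(t\vec\rho)$ on $S\Sigma$ becomes the flow of $\pm\tilde Z$ on $\tilde M$ (only the choice of direction depends on an orientation of $\tilde D$, of which there is none on $M$), and $J$ conjugates it to its time-reversal because $J_\star\tilde Z=-\tilde Z$. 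Moreover Theorem \ref{theo:weyl} and Corollary \ref{coro:codim1} still hold on $M$ (their proofs use only the local Weyl law, not a global contact form): the microlocal Weyl measure $dW_\Delta$ is supported on $S\Sigma$ and, using $P(\tilde M)=2P(M)$, corresponds under $S\Sigma\simeq\tilde M$ to the lifted Popp probability measure $\tilde\nu$, the unique $J$-invariant probability measure on $\tilde M$ pushing forward to $\nu$.

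Next I would run the variance machinery on $M$. The Birkhoff and quantum normal forms of Section \ref{sec:melrose}, and Lemma \ref{lemcrochetnul}, Lemma \ref{lem14}, Proposition \ref{prop_eqAAt} are all semilocal statements near $\Sigma$ involving only a local choice of contact form, hence are unaffected by non-orientability; and Corollary \ref{cor_weyl} (that $V$ depends only on the symbol restricted to $\Sigma$) survives as well. So, fixing $f\in C^\infty(M,\R)$ and letting $A$ be the multiplication operator by $f$, and choosing a smooth family $(a_t)$ of symbols with ${a_t}_{\vert\Sigma}=a_{\vert\Sigma}\circ\exp(t\vec\rho)$, Proposition \ref{prop_eqAAt} gives $V(A-\bar A_T)=0$ for every $T>0$, where $\bar A_T$ has principal symbol $\bar a_T$ with ${\bar a_T}_{\vert\Sigma}=\frac1T\int_0^T a_{\vert\Sigma}\circ\exp(t\vec\rho)\,dt$. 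Combining this with Lemma \ref{remCS}, \eqref{ineqV} and the microlocal Weyl law, for any constant $c$ one gets
\[
V(A-c\,\mathrm{id})\le 2\,V(\bar A_T-c\,\mathrm{id})\le 2\int_{S\Sigma}\big|{\bar a_T}_{\vert\Sigma}-c\big|^2\,dW_\Delta .
\]
No ergodicity has been used so far.

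Finally I would invoke the hypothesis. Here $a_{\vert\Sigma}=f\circ p$ is the lift of $f$ to $S\Sigma\simeq\tilde M$, hence $J$-invariant. By the von Neumann mean ergodic theorem, ${\bar a_T}_{\vert\Sigma}\to\langle a_{\vert\Sigma}\rangle$ in $L^2(dW_\Delta)=L^2(\tilde M,\tilde\nu)$, the orthogonal projection of $f\circ p$ onto the $\tilde Z$-flow-invariant functions; and this projection is again $J$-invariant, since $J$ fixes $f\circ p$ while conjugating the flow to its time-reversal, and forward and backward Ces\`aro averages have the same $L^2$-limit. Thus $\langle a_{\vert\Sigma}\rangle$ is invariant under both the $\tilde Z$-flow and $J$, so by the ergodicity hypothesis it equals $\tilde\nu$-a.e.\ the constant $\bar a:=\int_{\tilde M}(f\circ p)\,d\tilde\nu=\int_M f\,d\nu$. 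Taking $c=\bar a$ above and letting $T\to\infty$ gives $V(A-\bar a\,\mathrm{id})=0$, and the Koopman--von Neumann lemma together with the diagonal argument of Section \ref{sec1} produce a density-one sequence $(n_j)$ with $\int_M f\,|\phi_{n_j}|^2\,d\mu\to\int_M f\,d\nu$ for every $f$, i.e.\ QE. (The same argument yields the microlocal refinement $\langle A\phi_{n_j},\phi_{n_j}\rangle\to\int_{S\Sigma}a\,dW_\Delta$ whenever the principal symbol of $A\in\Psi^0$ restricted to $S\Sigma$ descends to $M$, the analogue of the evenness condition \eqref{apair}.) The only genuinely new point is the observation that the Reeb time-average of a $J$-invariant symbol is automatically $J$-invariant, so the weaker ``$J$-twisted'' ergodicity is exactly what the proof needs; the main --- and essentially only --- obstacle is the routine bookkeeping of checking that each statement of Sections \ref{sec:micWeyl}--\ref{sec_variance} written with $\alpha_g$ remains meaningful and true when $\alpha_g$ exists only locally, which it does because $V$, $E$ and the normal forms see only $\Sigma$ and its orientation-free symplectic and Reeb structure.
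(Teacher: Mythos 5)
Your proof is correct, and it reaches the same destination as the paper's (very terse) argument by a slightly different route. The paper lifts the \emph{quantum} problem to the orientation double cover $\tilde M$: eigenfunctions lift to $J$-even functions, $V_M(A)=V_{\tilde M}(\tilde A)$ with $\tilde A$ $J$-even, and one then invokes the complex-eigenbasis variant of Theorem \ref{thm1} upstairs, the evenness condition \eqref{apair} being automatic for lifted operators. You instead keep all the spectral analysis on $M$ — correctly observing that the Weyl law, the normal form, Lemma \ref{lemcrochetnul} and Proposition \ref{prop_eqAAt} are local or microlocal near $\Sigma$ and never use a global contact form — and lift only the \emph{classical} limiting dynamics, identifying $(S\Sigma, \exp(t\vec\rho), W_\triangle)$ with $(\tilde M,\pm\tilde Z,\tilde\nu)$ via Corollary \ref{coro:codim1}. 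Both routes rest on the same two pivots: the connectedness of $S\Sigma$ removes the need for real-valued eigenfunctions (there is no $\Pi_+/\Pi_-$ splitting to reconcile), and the Reeb time-average of a $J$-invariant observable is automatically $J$-invariant because $J$ conjugates the flow to its time-reversal and forward and backward Ces\`aro averages share the same $L^2$ limit, so the weaker ``$J$-twisted'' ergodicity is exactly sufficient. This last point is the real content of the adaptation, and your write-up has the merit of making it explicit where the paper leaves it implicit; your version also clarifies that the variance machinery itself is insensitive to orientability, which is worth recording.
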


The proof is an adaptation of the orientable case. Note that ${\Sigma }\setminus \{0\}$ is connected.
We can remove the assumption that the eigenfunctions are real-valued: indeed, any eigenfunction on $M$, real-valued or complex-valued, is lifted to $\tilde M$ to an even function.
Moreover, denoting by $V_M$ (resp., by $V_{\tilde M}$) the variance on $M$ (resp., on $\tilde M$), we have $V_M(A) = V_{\tilde M}(\tilde A)$, with $\tilde A$ even with respect to the involution $J$. In particular, the principal symbol of $\tilde A$ satisfies \eqref{apair} along $\Sigma$.



\appendix


\section{Appendix: some tools of microlocal analysis}

\subsection{Pseudo-differential operators and Fourier integral operators}\label{app:PDO}
In this section, we recall some definitions and facts used in the paper concerning pseudo-differential operators (PDOs)
and Fourier integral operators (FIOs). Proofs can be found in the original papers \cite{Ho-71} and \cite{DH-72}, and in the more geometrical book \cite{Du-96} (see also \cite{Zworski}).

\subsubsection{PDOs}
In what follows, $M$ is a smooth compact manifold of dimension $d$ equipped with a smooth non-vanishing density $\mu$.
The algebra $\Psi (M)$ of classical pseudo-differential operators on $M$ is graded according to the chain of inclusions $\Psi^{-\infty} (M)\cdots \subset \Psi^m (M) \subset \Psi^{m+1}(M) \subset \cdots$, where $m$ is called the order.
What is important and useful is the notion of principal symbol $\sigma _p$  and of sub-principal symbol $\sigma_{\textrm{sub}}$ of a PDO $A\in \Psi^m(M)$.
There is a bijective map
$$
(\sigma_p, \sigma_{\textrm{sub}}):\Psi^m (M)/\Psi^{m-2}(M) \longrightarrow \mathcal{S}^m (M) \oplus \mathcal{S}^{m-1}(M) ,
$$
where $\mathcal{S}^m(M) $ is the space of smooth homogeneous functions of order $m$ defined on the cone $T^\star M \setminus \{0\}$.
A quantization is a continuous linear mapping
$$
\Op:\mathcal{S}^0(M) \rightarrow \Psi^0(M)
$$
satisfying $\sigma _p (\Op(a))= a$. 
An example of quantization is obtained by using partitions of unity and the so-called Weyl quantization given in local coordinates by:
$$
\OpW(a) f (q) = (2\pi)^{-d}\int_{\R^d_{q'}\times \R^d_p} e^{i \langle q-q' , p \rangle }a\left( \frac{q+q'}{2},p \right) f(q') \, dq' \, dp  .
$$
Note that $\sigma_{\textrm{sub}}$ is usually defined for operators acting on half-densities: here we make the identification
$f \leftrightarrow fd\mu ^{1/2} $  between functions and half-densities, taking into account that the manifolds are equipped with densities.
This was a nice  original discovery of Leray (see \cite{GKL-64}).
The sub-principal symbol is characterized by the action of pseudo-differential operators on oscillatory functions as follows:
if $u(q) = b(q)\exp(i\tau S(q))$ with $b$, $S$ smooth and real-valued and, if $A\in \Psi^m (M)$, then
$$
\int _M A (u)  \bar{u} \, d\mu = \tau^m  \int _M \left( \sigma _p (A) (q,S'(q))+\tau^{-1}\sigma _{\textrm{sub}}(q,S'(q))\right)
\vert u(q)\vert^2\, d\mu (q)  + \mathrm{O}(\tau^{m-2}) .
$$
Moreover, we have the following properties:
\begin{itemize}
\item $\sigma_p(AB)= \sigma_p(A) \sigma_p(B)$, for any $A\in \Psi^m(M)$ and $B\in \Psi^l(M)$.
\item If $A\in \Psi^m(M)$ and $B\in \Psi^l(M)$, then
$[A,B]\in\Psi^{m+l-1}(M)$
and $\sigma_p([A,B])=\{ \sigma_p(A), \sigma_p(B)\}/i$, where the Poisson bracket is taken with respect to the canonical symplectic structure of $T^\star M$.

\item If $X$ is a vector field on $M$ and $X^\star $ is its formal adjoint in $L^2(M,\mu)$, then $X^\star X $ is a PDO of order $2$ such that $\sigma _p (X^\star X)=h_X^2  $ and $\sigma_{\textrm{sub}}(X^\star X)=0$.

\item PDOs act on Sobolev spaces: if $A\in  \Psi^m(M)$, then $A$ maps continuously the
space $H^s(M)$ to the space $H^{s-m}(M)$. It follows that two quantizations of a given symbol $a$ of order $0$ differ by a compact operator. 
\item To each distribution $T$  on $M$ is associated its wave-front set $WF(T)$,
 which is a closed sub-cone of  $T^\star M \setminus \{0\}$, whose projection onto $M$ is the singular support of $T$. 
More precisely, in local coordinates, we have
$(q,p)\notin WF(T)$ if and only if there exists $\chi \in C_0^\infty(M)$ with $\chi (q)\neq 0$ such that the Fourier transform of $\chi T$ is rapidly decaying in some conical neighborhood of $p$.
For every operator $A:C^{\infty} (M)\rightarrow \mathcal{D}'(N) $, of Schwartz kernel $k_A \in \mathcal{D}'(M \times N)$, we define
$$
WF'(A)=\{ (q,p;q',-p')\in T^\star M \times T^\star N \ \mid\ (q,q',p,p')\in WF(k_A) \} .
$$
The maps $WF$ and $WF'$ have nice set theoretical properties such as $WF (Au)\subset WF' (A) \circ WF(u)$, for every $u\in C^{\infty} (M)$.
\end{itemize}

\subsubsection{FIOs} 
Let $\chi : V \rightarrow W$ be a symplectic diffeomorphism from an open cone
$V \subset T^\star M$ to an open cone
$W \subset T^\star N$, where $M$ and $N$ are manifolds having the same dimension,
 respectively endowed with smooth non-vanishing measures $\mu$ and $\nu$. To this diffeomorphism is associated
a family of linear  operators $U:L^2(M,\mu)\rightarrow L^2(N , \nu)$, sometimes called ``quantizations of $\chi$'',
 with the following properties:
\begin{itemize}
\item $\{ (z,\chi (z)) \ \mid\ z\in V \} \, \cap\, WF' (U^\star U - \mathrm{id})=\emptyset $.
We say that $U$ is \textit{microlocally unitary} near the graph of $\chi $.
\item If $A\in \Psi^m (N)$, then $B=U^\star A U \in \Psi^m (M )$ and
the principal symbols satisfy on $V$ the relationship
$\sigma_p(B)\circ \chi =\sigma _p (A)$. This is sometimes called the \textit{Egorov theorem}.
\item If $\sigma _{\textrm{sub}}(A)=0$, then the same property holds for $B$ (see also Appendix \ref{app:Weinstein}).
\end{itemize}

\subsubsection{Pseudo-differential operators flat on $ \Sigma $}\label{app:flat}
Let us define the notion of flatness used in Section \ref{main:lemma}.

\begin{definition}\label{def_flat} Let $\Sigma $ be a closed sub-cone of $T^\star M \setminus \{0\}$.
Given any $k\in \N\cap \{\infty\}$ and any smooth function $f$ on $T^\star M$, the notation $f=\mathrm{O}_\Sigma(k)$ means that $f$ vanishes on $\Sigma$ at least at the order $k$. The word flat is used when $k=+\infty$.

A pseudo-differential operator $A$ (on any order) on $M$ is said to be flat on $\Sigma$ if, for any $(q_0,p_0)\in \Sigma$ and, for  any local canonical coordinate system around $q_0$, the full Weyl symbol of $A$ is flat at $(q_0,p_0)$. In this case, we write $A\in \mathrm{O}_\Sigma(\infty)$.
\end{definition}

\begin{lemma} 
The set of pseudo-differential operators that are flat on $\Sigma$ is a bilateral ideal in the algebra of classical pseudo-differential operators on $M$.
\end{lemma}

\begin{proof}
This follows from the fact that the full Weyl symbol of a product of pseudo-differential operators at the
point $(q_0,p_0)$  is 
given modulo flat terms by sums of products of partial derivatives of the symbols of both operators
at the same point $(q_0,p_0)$.
\end{proof}

The following lemma is required in Section \ref{main:lemma}.

\begin{lemma} \label{lemm:facto}
If $C \in \Psi^m(M)$ is flat on $\Sigma$, then there exists $C_1\in \Psi^{m-2}(M)$, which is flat on $\Sigma$, such that $C=C_1  \triangle _{sR}$ modulo a smoothing operator that is flat on $\Sigma$. 
\end{lemma}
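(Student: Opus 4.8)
The plan is to exploit the fact that $-\triangle_{sR}$ is elliptic away from $\Sigma$ in order to invert it microlocally outside $\Sigma$, and then to use the flatness of $C$ along $\Sigma$ to control what happens near $\Sigma$. First I would choose a microlocal cutoff: pick $\chi_0\in\Psi^0$ with principal symbol equal to $1$ in a conic neighborhood of $\Sigma$ and supported in a slightly larger conic neighborhood of $\Sigma$, and set $\chi_1 = \mathrm{id}-\chi_0$, so that $\chi_1$ is microlocally supported in the elliptic region $T^\star M\setminus\Sigma$. On the microlocal support of $\chi_1$, the operator $-\triangle_{sR}$ is elliptic of order $2$, so there is a microlocal parametrix $E\in\Psi^{-2}$ with $(-\triangle_{sR})E = \chi_1 \mod \Psi^{-\infty}$ (and $E$ itself microlocally supported away from $\Sigma$). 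Then $\triangle_{sR}(E C) = -\chi_1 C \mod \Psi^{-\infty}$, and since $\chi_1 = \mathrm{id} - \chi_0$, we obtain $\triangle_{sR}(E C) = -C + \chi_0 C \mod\Psi^{-\infty}$. Thus it remains to absorb the term $\chi_0 C$, which is the ``near-$\Sigma$'' part.

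The key point is that $\chi_0 C$ is \emph{also} flat along $\Sigma$, because $C$ is flat along $\Sigma$ and composition with $\chi_0\in\Psi^0$ preserves this (the set of pseudo-differential operators flat on $\Sigma$ is a bilateral ideal, by Definition \ref{def_flat}). Now I would handle $\chi_0 C$ by a division-by-$g^\star$ argument at the level of full symbols: in local canonical coordinates, $\Sigma$ is the zero set of the homogeneous principal symbol $g^\star = h_X^2+h_Y^2$, which is a non-negative quadratic form in the $D$-directions with a clean (conic, codimension-two) zero set. A smooth symbol flat along $\Sigma = \{g^\star=0\}$ can be divided by $g^\star$ to yield again a smooth symbol, flat along $\Sigma$, of order lowered by $2$ --- this is a Hadamard-type division lemma, valid because $g^\star$ vanishes to exactly second order transversally to $\Sigma$ and the flatness of the numerator is infinite-order. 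Carrying this out symbol-class by symbol-class (or via an asymptotic expansion / Borel summation) produces $C_2\in\Psi^{m-2}$, flat along $\Sigma$, with $\triangle_{sR} C_2 = \chi_0 C \mod \Psi^{-\infty}$; note that multiplying by the full symbol of $\triangle_{sR}$ rather than just its principal symbol only introduces lower-order flat corrections, which can be handled by iterating the same division, so the construction closes.

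Combining the two pieces, set $C_1 = -EC - C_2\in\Psi^{m-2}$. Both $EC$ and $C_2$ are flat along $\Sigma$ ($EC$ because $E$ is microlocally supported away from $\Sigma$, hence $EC$ has empty wavefront near $\Sigma$ and is a fortiori flat there; $C_2$ by construction), so $C_1\in\mathrm{O}_\Sigma(\infty)$, and $\triangle_{sR} C_1 = -\triangle_{sR}(EC) - \triangle_{sR}C_2 = (C - \chi_0 C) - \chi_0 C + \chi_0 C$... let me recompute: $\triangle_{sR}(EC) = -C + \chi_0 C \mod\Psi^{-\infty}$ gives $-\triangle_{sR}(EC) = C - \chi_0 C$, and $-\triangle_{sR}C_2 = -\chi_0 C$, so $\triangle_{sR}C_1 = C - \chi_0 C - \chi_0 C$; to fix the bookkeeping one should instead define $C_2$ so that $\triangle_{sR}C_2 = -\chi_0 C$, i.e. absorb the sign there, giving $\triangle_{sR}C_1 = C \mod\Psi^{-\infty}$ as desired. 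I expect the main obstacle to be the division lemma: making precise that a symbol flat on the conic set $\Sigma=\{g^\star=0\}$ divides smoothly by $g^\star$ with control of all the seminorm estimates (uniformity in the compact sets of the base, and homogeneity of the resulting expansion), and checking that the passage from the principal symbol of $\triangle_{sR}$ to its full symbol does not spoil the argument. Everything else is routine microlocal calculus.
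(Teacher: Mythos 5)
Your proposal is correct and its engine is the same as the paper's: the paper simply divides the principal symbol of $C$ by $g^\star=\sigma_P(-\triangle_{sR})$ globally (the quotient being smooth and flat by exactly the Hadamard-type division you describe, and a symbol of order $m-2$ by homogeneity), then writes $C=\triangle_{sR}C'+R_1$ with $R_1\in\Psi^{m-1}$ flat and iterates. Your preliminary microlocal splitting into an elliptic-parametrix piece and a near-$\Sigma$ piece is harmless but unnecessary, since away from $\Sigma$ the symbol $g^\star$ is non-vanishing and the division is trivially smooth there.
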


\begin{proof} 
By using a partition of unity, $C$ can be written as a sum of operators compactly supported in an atlas of $M$. Hence it suffices to prove the statement in $T^\star U$, where $U$ is a chart of $M$.
Let $c'$ be the quotient of the principal symbol $c$  of $C$ by $g^\star=\sigma_P(-\triangle _{sR})$.
Since $c$ is flat on $\Sigma $ and $g^\star$ vanishes along $\Sigma $ exactly
at order $2$, $c'$ is a smooth symbol of order $m-2$ that is flat on $\Sigma$.
Then the operator $C'=\OpW(c')$ is flat on $\Sigma$  and $C= C'\triangle _{sR} +R_1 $
where  $R_1\in \Psi^{m-1}(M)$ is also flat on $\Sigma$.  Then we
iterate the construction on $R_1$. 
\end{proof}

\subsection{The Weinstein argument}\label{app:Weinstein}
We provide here an argument of Weinstein given in \cite{We-74}, leading to the following result.

\begin{proposition}\label{prop_Weinstein}
Let $X$ be a smooth manifold.
Let $\Delta$ be a pseudo-differential operator defined in some cone $C\subset T^\star X$.
We denote by $p$ the principal symbol of $\Delta$, and we assume that the sub-principal symbol of $\Delta $ vanishes.
Let  $\chi : C \rightarrow C'\subset T^\star Y $ be a canonical transformation, where $Y$ is another smooth manifold. 
Then, there exists a microlocally unitary Fourier Integral Operator $U_\chi$, associated with $\chi $, such that $U_\chi\Delta U_\chi^\star =B$, 
where $B$ is a pseudo-differential operator in $C'$ whose principal symbol
 is $p\circ \chi^{-1}$ (general Egorov theorem) and whose sub-principal symbol vanishes.
\end{proposition}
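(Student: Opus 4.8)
The plan is to follow the argument of Weinstein \cite{We-74}: start from an arbitrary microlocally unitary Fourier Integral Operator quantizing $\chi$, and correct it by a microlocally unitary zero-th order pseudo-differential factor chosen so as to kill the sub-principal symbol by solving a transport equation along the Hamiltonian flow of $p$.

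\textbf{Reduction to a transport equation.} Let $m_0$ be the order of $\Delta$. By the standard theory of Fourier Integral Operators \cite{Ho-71,Du-96}, there exists a microlocally unitary FIO $U_0$ associated with $\chi$; by the Egorov theorem, $B_0 := U_0\Delta U_0^\star$ is a pseudo-differential operator in $C'$ with principal symbol $p\circ\chi^{-1}$, and we denote by $r$ its sub-principal symbol, a homogeneous function of degree $m_0-1$ on $C'$. Every microlocally unitary FIO quantizing $\chi$ is of the form $QU_0$ with $Q\in\Psi^0$ microlocally unitary, and every such $Q$ may be written $Q=\exp(iA)$ with $A\in\Psi^0$ self-adjoint; set $a=\sigma_P(A)$, a real homogeneous symbol of degree $0$. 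From the Baker--Campbell--Hausdorff expansion,
\[
(QU_0)\,\Delta\,(QU_0)^\star=\exp(iA)\,B_0\,\exp(-iA)=B_0+i[A,B_0]+(\text{terms of order}\le m_0-2),
\]
and since the commutator lowers the order by one, the principal symbol is unchanged, equal to $p\circ\chi^{-1}$; as the principal symbol is unchanged, the sub-principal symbol changes only through the principal symbol of $i[A,B_0]$, namely $\{a,p\circ\chi^{-1}\}$, so that
\[
\sigma_{\mathrm{sub}}\big((QU_0)\Delta(QU_0)^\star\big)=r+\{a,p\circ\chi^{-1}\}=r-H_{p\circ\chi^{-1}}(a),
\]
where $H_{p\circ\chi^{-1}}$ is the Hamiltonian vector field of $p\circ\chi^{-1}$.

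\textbf{Solving the equation.} It therefore suffices to find a real homogeneous symbol $a$ of degree $0$ on $C'$ with $H_{p\circ\chi^{-1}}(a)=r$ (the degrees match, since $H_{p\circ\chi^{-1}}$ raises the homogeneity degree by $m_0-1$): one then takes $A=\OpW(a)$, which is self-adjoint, and $U_\chi=\exp(iA)U_0$, obtaining $\sigma_{\mathrm{sub}}(U_\chi\Delta U_\chi^\star)=0$ while $U_\chi$ is microlocally unitary with principal symbol $p\circ\chi^{-1}$, as required. On the set where $d(p\circ\chi^{-1})\neq 0$ the Hamiltonian vector field does not vanish; after shrinking the cone $C'$ to a conic flow box for that vector field, one picks a conic hypersurface transverse to the bicharacteristics, prescribes $a=0$ on it, and integrates the ordinary differential equation $H_{p\circ\chi^{-1}}(a)=r$ along the flow, which produces the desired solution.

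\textbf{The main obstacle.} The delicate point is precisely the solvability of this transport equation. Two issues must be addressed. First, on a fixed cone there is the classical mean-value obstruction along closed bicharacteristics; this is why the statement is microlocal, since on a small enough conic neighbourhood the flow has no returning trajectories and the equation is integrated trivially by the flow-box construction above. Second, when $p\circ\chi^{-1}$ is characteristic with vanishing differential along a submanifold $\Sigma\subset C'$—exactly the situation for sub-Riemannian Laplacians, where $p=g^\star$ vanishes to order two on $\Sigma=D^\perp$—the vector field $H_{p\circ\chi^{-1}}$ vanishes there, so the equation forces $r$ to vanish on $\Sigma$. This is where the hypothesis $\sigma_{\mathrm{sub}}(\Delta)=0$ enters: for the natural choice of $U_0$ (with canonically normalised half-density symbol) the defect $r$ then vanishes on $\Sigma$ to the relevant order, and in the application the symplectomorphism $\chi$ of Theorem \ref{thm:BNF} matches the principal symbols to infinite order along $\Sigma$, so that $r$ is flat along $\Sigma$ and the transport equation is solved there with $a=0$, while away from $\Sigma$ one integrates as above.
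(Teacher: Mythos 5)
Your reduction to the transport equation $H_{p\circ\chi^{-1}}(a)=r$ is correct as far as it goes, but the proof has a genuine gap at exactly the point you yourself flag as ``the main obstacle'': the solvability of that equation is never established, and in the situation where the proposition is actually used it fails by your own flow-box argument. The proposition is invoked on a full conic neighbourhood of $\Sigma_q$, where $p\circ\chi^{-1}=\vert s\vert(u^2+v^2)+\mathrm{O}_\Sigma(3)$ has critical points all along $\Sigma$ and its bicharacteristics near $\Sigma$ rotate (approximately periodically) in the $(u,v)$-plane; so non-recurrence fails precisely in the region of interest, and the equation imposes not only $r_{\vert\Sigma}=0$ but the vanishing of the averages of $r$ over the (nearly) closed orbits. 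Your way out --- ``for the natural choice of $U_0$ the defect $r$ vanishes on $\Sigma$ to the relevant order'' --- is asserted, not proved; it is exactly the nontrivial content of the proposition, and even if granted it would still leave the mean-value obstruction and the division by a vector field vanishing on $\Sigma$ unaddressed.

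The paper's argument (Weinstein's) sidesteps the correction step entirely, and shows that for the right $U_0$ one has $r\equiv 0$, so there is nothing to solve. One first arranges $U_\chi$ to be microlocally unitary with \emph{constant} principal symbol of modulus one: starting from any $U_0$ with that prescribed principal symbol, one writes $U_0^\star U_0=\mathrm{id}+A$ with $A\in\Psi^{-1}$ self-adjoint and sets $U_\chi=U_0(\mathrm{id}+A)^{-1/2}$. Then, writing $B=U_\chi\Delta U_\chi^\star$, the Schwartz kernel $K$ of $U_\chi$ is a Lagrangian distribution on the graph of $\chi$ satisfying $(\Delta_x\otimes\mathrm{id}_Y-\mathrm{id}_X\otimes B_y)K\sim 0$, and the second (sub-principal) transport equation for $K$ is the sum of the Lie derivative of $\sigma(K)$ along the Hamiltonian flow and of $\bigl(\sigma_{\mathrm{sub}}(\Delta)-\sigma_{\mathrm{sub}}(B)\bigr)\sigma(K)$. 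The Lie-derivative term vanishes because $\sigma(K)$ is a constant multiple of the canonical half-density, which the Hamiltonian flow preserves; since $\sigma_{\mathrm{sub}}(\Delta)=0$ and $\sigma(K)$ is non-vanishing, this forces $\sigma_{\mathrm{sub}}(B)=0$ identically. If you wish to keep your formulation, the missing step is to carry out this kernel computation and conclude $r\equiv 0$ for the constant-symbol $U_0$ --- at which point your correction by $\exp(iA)$ becomes unnecessary.
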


\begin{proof}
The proof uses in a strong way the symbolic calculus of Fourier Integral Operators, for which we refer to the book \cite{Du-96} or to the paper \cite{We-74}. Let us sketch the argument.
We choose the Fourier Integral Operator $U_\chi$ associated with the canonical transformation $\chi$ such that its principal symbol is constant of modulus $1$ and $U_\chi$ is microlocally unitary, i.e., $ U_\chi^\star U_\chi =\mathrm{id}$ in the cone $C$. This is possible as follows:
we choose a first $U_0$ with only the prescription of the principal symbol, then $ U_0^\star U_0 =\mathrm{id} + A$ where $A $ is a self-adjoint pseudo-differential operator in $\Psi^{-1}(C)$.
If $D=(\mathrm{id} +A)^{-1/2}$ in $C$, then we take $U_\chi=U_0 D$.

Denoting by $K(x,y)$ the Schwartz kernel of $U_\chi$, if $B=U_\chi\Delta U_\chi^\star $, the relation $BU_\chi -U_\chi \Delta \sim 0$ is written as
$$
\left( \Delta_x \otimes \mathrm{id}_Y - \mathrm{id}_X \otimes B_y \right)K \sim 0 .
$$
The distribution $K$ is a Lagrangian distribution associated with a submanifold of $C\times C'$ which is the graph of $\chi $.
If we assume that the principal symbol of $U_\chi$ is a constant of modulus $1$, then the sub-principal symbol of the right-hand side, which is $0$, is the sum of Lie derivatives of the principal symbol of $K$, which vanish due to the choice of $U_\chi$ and of the product of the sub-principal symbol of $\mathrm{id}_X \otimes B_y$  by the non-vanishing symbol of $U_\chi$. This implies that the latter vanishes.
This is the argument of Weinstein. We have only to take care of the fact that tensor products are pseudo-differential operators only in some cones of the product of the cotangent spaces
where $\xi$ and $\eta$ are of comparable sizes.  
\end{proof}

\section{Appendix: Darboux-Weinstein lemma}\label{app:DW}
We have the following easy generalization of the well-known Darboux-Weinstein lemma (see \cite{We-71}).

\begin{lemma}\label{lem_gen_Weinstein}
Let $N$ be a manifold endowed with two symplectic forms $\omega_1$ and $\omega_2$, and let $P$ be a compact submanifold of $N$ along which $\omega_1=\omega_2+\mathrm{O}_P(k)$, for some $k\in\N^*\cup\{+\infty\}$. Then there exist open neighborhoods $U$ and $V$ of $P$ in $N$ and a diffeomorphism $f:U\rightarrow V$ such that $f=\mathrm{id}_N+\mathrm{O}_P(k+1)$ and $f^\star\omega_2=\omega_1$. Moreover, if $N$ has a conic structure, then the diffeomorphism $f$ can be chosen to be homogeneous with respect to that conic structure.
\end{lemma}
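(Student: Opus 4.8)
The plan is to use the Moser deformation (path) method, adapted so as to keep track of the order of vanishing along $P$ and, in the last part, of the conic structure.

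First I would set $\omega_t=\omega_1+t(\omega_2-\omega_1)$ for $t\in[0,1]$. Along $P$ one has $\omega_t=\omega_1$ (since $\omega_2-\omega_1=\mathrm{O}_P(k)$ with $k\geq 1$ vanishes on $P$), which is nondegenerate; because nondegeneracy is an open condition and $P$ is compact, there is an open neighborhood $U_0$ of $P$ on which all the $\omega_t$, $t\in[0,1]$, are symplectic. All the $\omega_t$ are closed, and $\sigma:=\omega_2-\omega_1$ is closed and $\mathrm{O}_P(k)$. Next I would invoke a \emph{relative Poincar\'e lemma with order control}: a closed form that is $\mathrm{O}_P(k)$ admits, on a neighborhood of $P$, a primitive that is $\mathrm{O}_P(k+1)$. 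Concretely, I would choose a tubular neighborhood $\pi\colon W\to P$ identifying $P$ with the zero section, use the fiberwise homotopy $m_s$ ($m_1=\mathrm{id}$, $m_0=\pi$) together with the radial Euler vector field $E$, and set $\beta=\int_0^1 s^{-1}m_s^\star(\iota_E\sigma)\,ds$; since $\sigma$ vanishes to order $k$ at the zero section, the factor $s^{-1}$ is compensated and one checks $d\beta=\sigma$ with $\beta=\mathrm{O}_P(k+1)$ (and $\mathrm{O}_P(\infty)$ when $k=\infty$). When $N$ carries a conic structure and $\omega_1,\omega_2$ are homogeneous of the same degree, I would choose $W$ and the homotopy $m_s$ to commute with the dilations, so that $\beta$ is homogeneous as well.

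Then I would define the time-dependent vector field $X_t$ on $U_0$ by $\iota_{X_t}\omega_t=-\beta$; this determines $X_t$ uniquely since $\omega_t$ is nondegenerate, $X_t$ depends smoothly on $t$, and $X_t=\mathrm{O}_P(k+1)$ because inverting the smooth nondegenerate $\omega_t$ preserves the order of vanishing along $P$. In the conic case $X_t$ is homogeneous, hence (by the argument of Lemma~\ref{lem_homogeneous}) its flow is equivariant with respect to the dilations. Since $X_t$ vanishes on $P$, its flow $\psi_t$ is defined for $t\in[0,1]$ on some neighborhood $U\subset U_0$ of $P$, fixes $P$ pointwise, and satisfies $\psi_t=\mathrm{id}_N+\mathrm{O}_P(k+1)$ (estimating $\psi_t(x)-x=\int_0^t X_s(\psi_s(x))\,ds$ using that $\psi_s$ moves a point by at most a constant multiple of its distance to $P$, and likewise for the derivatives via the variational equation). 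Finally, the standard Moser computation gives $\frac{d}{dt}\psi_t^\star\omega_t=\psi_t^\star\big(\mathcal{L}_{X_t}\omega_t+\sigma\big)=\psi_t^\star\big(d\iota_{X_t}\omega_t+d\beta\big)=0$, whence $\psi_t^\star\omega_t=\omega_1$ for all $t$; setting $f=\psi_1\colon U\to V:=\psi_1(U)$ yields $f^\star\omega_2=\omega_1$, $f=\mathrm{id}_N+\mathrm{O}_P(k+1)$, and $f$ homogeneous in the conic case.

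The main obstacle is the relative Poincar\'e lemma with \emph{sharp} control on the order of vanishing, together with its compatibility with the conic structure: one has to make sure the homotopy operator does not lose an order of flatness, which is precisely what forces the use of the radial homotopy with the $s^{-1}$ weight, and one has to arrange the tubular neighborhood (and all auxiliary choices) to be equivariant when a conic structure is present. Everything else is routine once the neighborhood $U_0$ and the primitive $\beta$ are in hand.
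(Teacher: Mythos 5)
Your proposal is correct and follows essentially the same route as the paper's proof: the Moser path $\omega_t=\omega_1+t(\omega_2-\omega_1)$, a relative Poincar\'e lemma with a radial homotopy operator producing a primitive that gains one order of vanishing along $P$, and the resulting flow which is tangent to the identity to order $k+1$ and homogeneous in the conic case. The only differences are notational (sign conventions in the Moser equation) and that you spell out the $s^{-1}$ weight in the homotopy operator, which the paper leaves implicit in its formula for $Q$.
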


The most usual statement of that lemma is when $k=1$, and then the usual conclusion is that $f=\mathrm{id}_N+O_P(1)$; actually, already in that case we have the better conclusion that $f=\mathrm{id}_N+O_P(2)$ (as is well known, and as it is proved for instance in \cite[Lemma 43.11 p. 462]{KrieglMichor}), i.e., $df(q)=\mathrm{id}$ for every $q\in N$, or in other words, $f$ is tangent to the identity.

\begin{proof}
We follow the standard argument (see, e.g., \cite{McDuffSalamon}). We define the closed two-form $\omega(t)=\omega_1+t(\omega_2-\omega_1)$, for every $t\in[0,1]$. Let $U$ be a neighborhood of $P$ in which $\omega(t)$ is nondegenerate for every $t$. By the relative Poincar\'e lemma, since $\omega_1$ and $\omega_2$ agree along $P$, shrinking $U$ if necessary, there exists a one-form $\eta$ on $U$ such that $\omega_1-\omega_2=d\eta$, with $\eta_x=0$ for every $x\in P$. Since $\omega_1=\omega_2+\mathrm{O}_P(k)$, we can actually choose $\eta$ such that $\eta= \mathrm{O}_P(k+1)$. Indeed, as it is well known in the relative Poincar\'e lemma, we can choose $\eta = Q(\omega_1-\omega_2)$, where $Q$ is defined by $Q\omega=\int_0^1F(t)^\star\iota_{Y(t)}\omega\, dt$, where $Y(t)$, at the point $y=F(t,x)$, is the vector tangent to the curve $F(s,x)$ at $s=t$, and $(F(t))_{0\leq t\leq 1}$ is a smooth homotopy from the local projection onto $P$ (in a tubular neighborhood of $P$) to the identity, fixing $P$.

The diffeomorphism $f$ is then constructed by the Moser trick. The time-dependent vector field $X(\cdot)$ defined for every $t$ by $\iota_{X(t)}\omega(t)=\eta$ generates the time-dependent flow $f(\cdot)$ (satisfying $\dot f(t)=X(t)\circ f(t)$, $f(0)=\mathrm{id}_N$),
 and we have
$$
\frac{d}{dt}f(t)^\star\omega(t) = f(t)^\star \mathcal{L}_{X(t)}\omega(t) + f(t)^\star\dot\omega(t) = f(t)^\star d (\iota_{X(t)}\omega(t)-\eta) = 0,
$$
whence $\omega_1=f(1)^\star\omega_2$. We set $f=f(1)$.

Since $\iota_{X(t)}\omega(t)=\eta=\mathrm{O}_P(k+1)$, it follows that $X(t)=\mathrm{O}_P(k+1)$ and hence $f(t)=\mathrm{id}_N+\int_0^t X(s)\circ f(s)\, ds=\mathrm{id}_N+\mathrm{O}_P(k+1)$ for every $t\in[0,1]$. The lemma is proved.

Let us now prove that, if $N$ is conic, with a conic structure $x\mapsto \lambda\cdot x$, for $\lambda>0$ and $x\in N$, then $f$ is homogeneous. The two-form $\omega=\omega_1-\omega_2$ is then conic, meaning that $\omega_{\lambda\cdot x}(\lambda\cdot v_1,\lambda\cdot v_2) = \lambda\omega_x(v_1,v_2)$, for all $\lambda>0$, $x\in N$ and $v_1,v_2\in T_xN$. It is easy to see that the homotopy operator $Q$ considered above can be chosen to be homogeneous. Then $\eta=Q\omega$ is homogeneous as well. It easily follows that the time-dependent vector field $X(\cdot)$ of the Moser trick is homogeneous (meaning that $X(t,\lambda\cdot x)=\lambda\cdot X(t,x)$) and hence that its flow is homogeneous. The conclusion follows.
\end{proof}


\section{Appendix on Birkhoff normal forms}

\subsection{The general procedure}\label{app:BNF}
In this section, we recall how to derive Birkhoff normal forms in a general setting.

We consider the  submanifold $X=X\times\{0\}$ of a symplectic manifold $Y= X\times \R^d$ with a Poisson bracket $\{\cdot,\cdot\}$.

\subsubsection{Taylor expansions along $X$}\label{app:taylorexp}
We will always consider germs of objects defined in some neighborhood of $X$ in $Y$.
We consider a (germ of) real-valued smooth function $S$ on $Y$ such that $S=\mathrm{O}_X (2)$.
Then the flow of the corresponding Hamiltonian vector field
$\vec{S}$ on $Y$ is defined on an interval containing $[0,1]$ in some neighborhood of $X$ in $Y$.
The (germ of) symplectomorphism $\exp(\vec S)$ (flow of $\vec{S}$ at time $1$) satisfies $\exp(\vec S) (x)=x$ for every $x\in X$.
Given any germ of function $f$, denoting by $\mathcal{T}(f)$ the Taylor expansion of $f$ along $X$, we have $\mathcal{T}(f)=\sum _{k=0}^{+\infty} f_k $ where $f_k$ is a polynomial on $\R^d$ whose coefficients are smooth functions on $X$.
Setting $\mathrm{ad}\, S.f=\{S,f\}$, we have 
$$
\mathcal{T}(f\circ\exp(\vec S))=\exp(\mathrm{ad}\, S) \mathcal{T}(f).
$$
This is a well defined power series because $\mathrm{ad}^n S.f =\mathrm{O}_X (n)$.
Another important property is that, if $S_k =\mathrm{O}_X(k)$ for every $k\geq 2$, then 
$\exp(\mathrm{ad}\, S_k)=\mathrm{id}+\mathrm{O}_X(k-1)$ and the composition
$$
\exp(\mathrm{ad}\, S_n)\circ\exp(\mathrm{ad}\, S_{n-1})\circ\cdots\circ\exp(\mathrm{ad}\, S_2)
$$
converges formally, as $n\rightarrow +\infty $, to the Taylor expansion of a symplectic diffeomorphism $\chi_\infty$
satisfying $\chi_\infty=\mathrm{id}+\mathrm{O}_X(1)$.

Indeed, this follows from the well-known Borel theorem: given any nonzero integer $n$, given a Fr\'echet space $E$ and a (multi-index) sequence $(a_\alpha)_{\alpha\in\N^n}$ in $E$, there exists a smooth  function $f:\R^n\rightarrow E$ whose infinite Taylor expansion at $0$
 is $\sum_{\alpha\in\N^n} a_\alpha x^\alpha$. Moreover, if $E$ has a conic structure, then $f$ can be chosen to be homogeneous for that conic structure.
 
The map $\chi_\infty$ is a local homogeneous diffeomorphism because it is tangent to the identity.
Note that this diffeomorphism, constructed by the Borel theorem, is not a priori symplectic, but
it satisfies $\chi_\infty^\star\omega=\omega+\mathrm{O}_X(\infty)$, and it is then possible to modify it by composing it with a homogeneous diffeomorphism tangent to the identity, in order to finally get a homogeneous symplectomorphism: this is done thanks to the Darboux-Weinstein lemma (Lemma \ref{lem_gen_Weinstein} in Appendix \ref{app:DW}, applied with $k=\infty$).

\subsubsection{Cohomological equations and normal forms.}
Let  $\mathcal{G}_k$ (resp., let $\mathcal{S }_l$) be a subspace of the space of homogeneous polynomials of degree $k$ (resp., of degree $l$) on $\R^d$
with smooth coefficients on $X$, for which
there exists $p\geq 0 $ such that 
$$
\{ \mathcal{S }_l,\mathcal{G}_k  \} \subset \mathcal{G}_{l+k-p } .
$$
We assume that $k_0+p-2 \geq 2$ and we consider the action of 
$\exp(\mathrm{ad}\, S_{k+p-2})$ on a formal series $H=H_2 +\sum _{k=k_0}^{+\infty} H_k $ with  $H_k \in \mathcal{G}_k$.
We have 
$$
\exp(\mathrm{ad}\, S_{m+p-2} ) H = H_2 +   \sum _{k=k_0 }^{m-1}H_k +  \{ S_{m+p-2} , H_2 \}+ H_{m} + R ,
$$
where the terms in the remainder $R$ are of degree $ k+n(m -2) $ (action of $n$ brackets on a term in  $\mathcal{G}_k$),
which is greater than $m$ provided that either ($k\geq 3$ and  $n \geq 1 $) or ($k=2$ and $n\geq 2$).
Moreover, there is only a finite number of terms of each degree. 
This implies that, for $k_0 \geq \max (4-p, 3)$, the series is formally convergent and
we can change the series starting from the term  of degree $k$  by adding a bracket of the form
$ \{ S_{k+p-2} , H_2 \} $. The cohomological equation is then 
written as
$$
\{ S_{k+p-2} , H_2 \} + H_k = \bar{H}_k ,
$$
where $\bar{H}_k$ is chosen in some suitable subspace $\mathcal{N}_k $  of $\mathcal{G}_k  $.
Finally, by considering the formal (infinite) composition of symplectic diffeomorphisms associated 
with the sequence $(S_{k+p-2})_{k\geq k_0}$, we get the normal form 
$$
H \sim H_2 +\sum _{k=k_0}^{+\infty} \bar{H}_k +\mathrm{O}_X (\infty ) .
$$

\subsubsection{Homogeneous canonical transformation.}
Let us assume that $X$ is an homogeneous symplectic cone with $\tau^\star \omega =\tau\cdot \omega $,
that $d=2r$ with the canonical symplectic structure, 
and that $\tau\cdot (x,n)=(\tau x,\sqrt{\tau }n)$.
If $S$ is homogeneous of degree $1$, then the associated canonical transformation is homogeneous of degree $0$, and
commutes with $\tau$. We can then apply the previous reductions on some homogeneous functions $H$ of fixed degree.

\subsection{Proof of Proposition \ref{prop:invBNF}}\label{app:proof:prop:invBNF}
Using that $H=g^\star\circ\chi_2^{-1}=\rho I+\mathrm{O}_\Sigma(3)$, and since $\mathcal{F}_{2,3}^{\mathrm{inv}}=\{0\}$ (see \eqref{sumT}), we start with the fact that
\begin{equation}\label{expdebut}
\begin{split}
H &= H_2  + \mathrm{O}_\Sigma(\infty) \mod \mathcal{F}_{2,\geq 3} \\
&= H_2 + \mathrm{O}_\Sigma(\infty) \mod \left( \mathcal{F}_{2,\geq 3}^0 \oplus \mathcal{F}_{2,\geq 4}^{\mathrm{inv}} \right) ,
\end{split}
\end{equation}
where we have set $H_2=\rho I$.

Our objective is to construct a homogeneous symplectomorphism $\varphi$ allowing us to remove from \eqref{expdebut} all terms in $\mathcal{F}_2^0$, in order to obtain the normal form 
$$
H\circ \varphi = H_2 + \mathrm{O}_\Sigma(\infty) \mod \mathcal{F}_{2,\geq 4}^{\mathrm{inv}},
$$
by using Lie transforms generated by appropriate elements of $\mathcal{F}_1^0$. 

\medskip

We proceed by (strong) recurrence on $k$, starting at $k=3$, by constructing, at each step, a local homogeneous symplectomorphism $\varphi_k$ satisfying $\varphi_k=\mathrm{id}+\mathrm{O}_\Sigma(2)$ and such that
$$
H\circ \varphi_3\circ\cdots \circ \varphi_k = H_2 + \mathrm{O}_\Sigma(\infty)
 \mod \left( \mathcal{F}_{2,\geq k+1}^0 \oplus \mathcal{F}_{2,\geq 4}^{\mathrm{inv}} \right) ,
$$
and we search each $\varphi_k$ in the form $\varphi_k = \varphi_k(1) = \exp (\vec F^0_k)$, where $\varphi_k(t) = \exp(t \vec F^0_k)$ is the flow at time $t$ generated by an adequate Hamiltonian function $F^0_k\in \mathcal{F}_{1,k}^0$ (note that $\varphi_k$ is then symplectic, as desired).

Before going to the recurrence, let us note that, for every $k\geq 3$, there exists a (small enough) conic neighborhood $C_k$ of $\Sigma_U^+$ such that the flow $\varphi_k$ is well defined on $[0,1]\times C_k$; moreover, since we are going to compose these symplectomorphisms, we choose $C_k$ such that $\varphi_{k+1}$ maps $C_{k+1}$ to $C_k$, for every $k\geq 3$.
Indeed, since $F^0_k = \mathrm{O}_\Sigma(k)$, we have 
$$
\varphi_k(t) = \mathrm{id} + \int_0^t \vec F^0_k\circ \varphi_k(s) \, ds = \mathrm{id} + \mathrm{O}_\Sigma(k-1)
$$
uniformly with respect to $t$ on compact intervals, and the claim follows, as well as the expansion $\varphi_k=\mathrm{id}+\mathrm{O}_\Sigma(2)$.

\medskip

Let us now make the construction by recurrence.

For $k=3$, we want to prove that there exists $F^0_3\in \mathcal{F}_{1,3}^0$ such that, setting $\varphi_3 = \exp (\vec F^0_3)$, we have $H\circ\varphi_3 = H_2 \mod \left( \mathcal{F}_{2,\geq 4}^0 \oplus \mathcal{F}_{2,\geq 4}^{\mathrm{inv}} \right)$.
Using that $\frac{d}{dt}(H\circ\varphi_3(t))=\{F^0_3,H\}_{\tilde\omega}\circ\varphi
_3(t)$, and since the flow is well defined on $[0,1]\times C_3$, we have
\begin{equation}\label{exp2varphi3}
H\circ\varphi_3(t) = H + \{ F_3^0, H\}_{\tilde\omega} + \mathrm{O}\left( \frac{t^2}{2} \{ F_3^0,\{ F_3^0, H \}_{\tilde\omega} \}_{\tilde\omega} \circ\varphi_3(t) \right),
\end{equation}
on $[0,1]\times C_3$. 
Using \eqref{expdebut}, we have $H = H_2 + H_3^0 \mod \left( \mathcal{F}_{2,\geq 4}^0 \oplus \mathcal{F}_{2,\geq 4}^{\mathrm{inv}} \right)$, with $H_3^0\in \mathcal{F}_{2,4}^0$.
Hence, taking $t=1$ in \eqref{exp2varphi3}, using \eqref{cohom2} and \eqref{cohom3}, we infer that
$$
H\circ\varphi_3 = H_2 + H_3^0 + \{ F_3^0, H_2 \}_{\tilde\omega} + \mathrm{O}_\Sigma(\infty) \mod \left( \mathcal{F}_{2,\geq 4}^0 \oplus \mathcal{F}_{2,\geq 4}^{\mathrm{inv}} \right).
$$
Therefore, we have to solve the cohomological equation
$$
\{ H_2, F_3^0 \}_{\tilde\omega} = H_3^0 + \mathrm{O}_\Sigma(\infty) \mod \left( \mathcal{F}_{2,\geq 4}^0 \oplus \mathcal{F}_{2,\geq 4}^{\mathrm{inv}} \right) ,
$$
which has a solution $F^0_3\in \mathcal{F}_{2,3}^0$ by using \eqref{cohom2}.

\medskip

Let us assume that we have constructed $\varphi_3,\ldots,\varphi_{k-1}$ such that
$$
H\circ \varphi_3\circ\cdots \circ\varphi_{k-1} = H_2 + \mathrm{O}_\Sigma(\infty) \mod \left( \mathcal{F}_{2,\geq k}^0 \oplus \mathcal{F}_{2,\geq 4}^{\mathrm{inv}} \right) .
$$
We want to prove that there exists $F^0_k\in \mathcal{F}_{2,k}^0$ such that,
 setting $\varphi_k = \exp (\vec F^0_k)$, we have
$$
H\circ \varphi_3\circ\cdots \circ\varphi_k = H_2 + \mathrm{O}_\Sigma(\infty) \mod \left( \mathcal{F}_{2,\geq k+1}^0
 \oplus \mathcal{F}_{2,\geq 4}^{\mathrm{inv}} \right) .
$$
Using that
$$
\frac{d}{dt}(H\circ\varphi_3\circ\cdots \circ\varphi_k(t))=
\{F^0_k,H\circ \varphi_3\circ\cdots \circ\varphi_{k-1} \}_{\tilde\omega}\circ\varphi_k(t) ,
$$
and since the flow $\varphi_k$ is well defined on $[0,1]\times C_k$, we have
\begin{multline}\label{exp2varphik}
H\circ\varphi_3\circ\cdots \circ\varphi_k(t) = H\circ\varphi_3\circ\cdots \circ\varphi_{k-1} +
 \{F^0_k,H\circ \varphi_3\circ\cdots \circ\varphi_{k-1} \}_{\tilde\omega} \\
+ \mathrm{O} \left(  \frac{t^2}{2} \{ F_k^0, \{ F_k^0,  H\circ \varphi_3\circ\cdots \circ\varphi_{k-1} \}_{\tilde\omega} \}_{\tilde\omega}  \right) ,
\end{multline}
on $[0,1]\times C_k$.
Using the recurrence assumption, we have 
$$
H\circ \varphi_3\circ\cdots \circ\varphi_{k-1} = H_2 + \tilde H_k^0 + \mathrm{O}_\Sigma(\infty)
 \mod \left( \mathcal{F}_{2,\geq k+1}^0 \oplus \mathcal{F}_{2,\geq 4}^{\mathrm{inv}} \right) ,
$$
for some $\tilde H_k^0\in \mathcal{F}_{2,k}^0$. Hence, taking $t=1$ in \eqref{exp2varphik}, using \eqref{cohom2} and \eqref{cohom3}, we infer that
$$
H\circ \varphi_3\circ\cdots \circ\varphi_k = H_2 + \tilde H_k^0 + \{ F^0_k, H_2ö€Žà¤ Š\}_{\tilde\omega} + \mathrm{O}_\Sigma(\infty) \mod
 \left( \mathcal{F}_{2,\geq k+1}^0 \oplus \mathcal{F}_{2,\geq 4}^{\mathrm{inv}} \right).
$$
Therefore, we have to solve the cohomological equation
$$
\{ H_2, F_k^0 \}_{\tilde\omega} = \tilde H_k^0 + \mathrm{O}_\Sigma(\infty) \mod \left( \mathcal{F}_{2,\geq k+1}^0
 \oplus \mathcal{F}_{2,\geq 4}^{\mathrm{inv}} \right) ,
$$
which has a solution $F^0_k\in \mathcal{F}_{2,k}^0$ by using \eqref{cohom2}.

The recurrence is established.

\medskip

By definition, $\varphi_k$ is the flow at time $1$ generated by the Hamiltonian function $F_k^0\in \mathcal{F}_1$. By definition of
 $\mathcal{F}_1$, we have $F^0_k(\lambda\cdot(\sigma,u,v)) = \lambda F^0_k(\sigma,u,v)$, that is, $F^0_k$ is homogeneous for the conic structure defined by \eqref{defconicstructure}. Then $\varphi_k$ is indeed homogeneous, as a consequence of the following general lemma, that we recall for completeness.

\begin{lemma}\label{lem_homogeneous}
Let $(N,\omega)$ be a conic symplectic manifold, with a conic structure $x\mapsto \lambda\cdot x$, for $\lambda>0$ and $x\in N$.
Let $H$ be a smooth Hamiltonian function on $N$, which is homogeneous, meaning that $H(\lambda\cdot x)=\lambda H(x)$ for every $\lambda>0$
 and every $x\in N$. Then the associated Hamiltonian vector field $\vec H$ is homogeneous, in the sense that
 $\vec H(\lambda\cdot x)=\lambda\cdot\vec H(x)$, and as a consequence, the generated flow $\exp(t\vec H)$ is homogeneous as well.
\end{lemma}

\begin{proof}[Proof of Lemma \ref{lem_homogeneous}.]
By definition, the symplectic form $\omega$ is conic, in the sense that $\omega_{\lambda\cdot x}(\lambda\cdot v_1,\lambda\cdot v_2) =
 \lambda\omega_x(v_1,v_2)$, for all $\lambda>0$, $x\in N$ and $v_1,v_2\in T_xN$. The Hamiltonian vector field $\vec H$ is defined at
 any point $x\in N$ by $\omega_x(\vec H(x),v) = dH(x).v$, for every $v\in T_x N$. Since $H$ is homogeneous, by differentiation we get
 that $dH(\lambda\cdot x).(\lambda\cdot v)=\lambda dH(x).v$, for every $v\in T_xN$, and therefore,
\begin{equation*}
\omega_{\lambda\cdot x}(\vec H(\lambda\cdot x),\lambda\cdot v) = dH(\lambda\cdot x).(\lambda\cdot v) = \lambda dH(x).v =
 \lambdaö€Žà¤ Š\omega_x(\vec H(x),v) = \omega_{\lambda\cdot x}(\lambda\cdot\vec H(x),\lambda\cdot v),
\end{equation*}
from which it follows that $\vec H(\lambda\cdot x)=\lambda\cdot\vec H(x)$.
\end{proof}

Let us now finish the proof.

We consider the infinite composition $\varphi_3\circ\cdots\circ\varphi_k\cdots $ which is convergent in the sense of formal series along $\Sigma _U ^+$.
By the Borel theorem recalled in Appendix \ref{app:taylorexp}, there
 exists a smooth homogeneous mapping $\varphi$ that is the Borel summation of that formal composition, i.e., such that $\varphi =
 \varphi_3\circ\cdots\circ\varphi_k\cdots + \mathrm{O}_\Sigma(\infty)$.
Clearly, $\varphi$ is a local homogeneous diffeomorphism (because it is tangent to the identity), and we have $\varphi = \mathrm{id} + \mathrm{O}_\Sigma(2)$ and $H\circ \varphi = H_2 + \mathrm{O}_\Sigma(\infty) \mod \mathcal{F}_{2,\geq 4}^{\mathrm{inv}}$.
Note that $\varphi$ may not be a symplectomorphism, however, by construction we have $\varphi^\star \tilde \omega = \tilde\omega +
 \mathrm{O}_\Sigma(\infty)$.
It is however possible to modify the homogeneous diffeomorphism $\varphi$, by composing it with a homogeneous diffeomorphism tangent
 to identity at infinite order, so as to obtain exactly $\varphi^\star \tilde \omega = \tilde\omega$ (and thus, $\varphi$ is a homogeneous
 symplectomorphism). This is done thanks to the Darboux-Weinstein lemma, given in Appendix \ref{app:DW}, applied with $k=+\infty$.

\subsection{Proof of Proposition \ref{prop:MelroseBNF}}\label{app:proof:prop:MelroseBNF}
Following the previous section, our objective is now to construct a symplectomorphism allowing us to remove all terms in $\mathcal{F}_2^{\mathrm{inv}}$.
Although the proof is similar to the one done in the previous section, there are several differences and subtleties which make it preferable to write the whole proof in details.

We proceed by (strong) recurrence on $k$, starting at $k=1$, by constructing, at each step, a local homogeneous symplectomorphism $\psi_{2k}$ satisfying $\psi_{2k}=\mathrm{id}+\mathrm{O}_\Sigma(2)$ and such that
$$
\tilde H\circ \psi_2\circ\cdots \psi_{2k} = H_2 + \mathrm{O}_\Sigma(\infty) \mod \mathcal{F}_{2,\geq 2k+2}^{\mathrm{inv}} ,
$$
and we search each $\psi_{2k}$ in the form $\psi_{2k} = \psi_{2k}(1) = \exp (\vec F^{\mathrm{inv}}_{2k})$, where $\psi_{2k}(t) = \exp(t \vec F^{\mathrm{inv}}_{2k})$ is the flow at time $t$ generated by an adequate Hamiltonian function $F^{\mathrm{inv}}_{2k}\in \mathcal{F}_{1,2k}^{\mathrm{inv}}$ (note that $\psi_{2k}$ will indeed be homogeneous by Lemma \ref{lem_homogeneous}).

Before going to the recurrence, let us note that, as in the previous section, for every $k\geq 1$, there exists a (small enough) conic neighborhood $C'_{2k}$ of $\Sigma_U^+$ such that the flow $\psi_{2k}$ is well defined on $[0,1]\times C'_{2k}$; moreover, since we
 are going to compose these symplectomorphisms, we choose $C'_{2k}$ such that $\psi_{2k+2}$ maps $C'_{2k+2}$ to $C'_{2k}$, for every $k\geq 1$.
Indeed, since $F^{\mathrm{inv}}_{2k} = \mathrm{O}_\Sigma(2k)$, we have 
$$
\psi_{2k}(t) = \mathrm{id} +
 \int_0^t \vec F^{\mathrm{inv}}_{2k}\circ \psi_{2k}(s) \, ds = \mathrm{id} + \mathrm{O}_\Sigma(2k-1)=\mathrm{id}+\mathrm{O}_\Sigma(1)
$$ 
uniformly with respect to $t$ on compact intervals, and the claim follows. This argument is however not sufficient in order to establish that $\psi_{2k}=\mathrm{id}+\mathrm{O}_\Sigma(2)$, because we start with $k=1$. To prove this property, we use temporarily the notation $x=(\sigma,u,v)=(x_1,\ldots,x_6)$, and we consider the six functions $\pi_i(x)=x_i$ in $\R^6$, $i=1,\ldots,6$. 
Writing $F_{2k}^{\mathrm{inv}}(q,s,u,v) = a_{2k}(q,s)(u^2+v^2)^k$, and using \eqref{fundform}, we infer that
\begin{equation*}
\frac{d}{dt} \pi_i\circ \psi_{2k}(t) = \{ F^{\mathrm{inv}}_{2k}, \pi_i \}_{\tilde\omega}\circ \psi_{2k}(t)
= \left\{ \begin{array}{ll}
\{a_{2k},\pi_i\}_{\omega_W}(\pi_5^2+\pi_6^2)^k \circ\psi_{2k}(t) = \mathrm{O}_\Sigma(2) & \textrm{if}\ i\leq 4, \\
0 & \textrm{if}\ i=5,6,
\end{array}\right.
\end{equation*}
and therefore $\pi_i\circ\psi_{2k} = \pi_i + \mathrm{O}_\Sigma(2)$, for $i=1,\ldots,6$. We conclude that $\psi_{2k}=\mathrm{id}+\mathrm{O}_\Sigma(2)$.

\medskip

Let us now make the construction by recurrence.

For $k=1$, we want to prove that there exists $F^{\mathrm{inv}}_2\in \mathcal{F}_{1,2}^{\mathrm{inv}}$ such that, setting
 $\psi_2 = \exp (\vec F^{\mathrm{inv}}_2)$, we have $\tilde H\circ\psi_2 = H_2 \mod \mathcal{F}_{2,\geq 6}^{\mathrm{inv}}$.
Using that $\frac{d}{dt}(\tilde H\circ\psi_2(t))=\{F^{\mathrm{inv}}_2,\tilde H \}_{\tilde\omega}\circ\psi_2(t)$, and since the 
flow is well defined on $[0,1]\times C'_2$, we have
\begin{equation}\label{exp2psi3}
\tilde H\circ\psi_2(t) = \tilde H + \{ F_2^{\mathrm{inv}}, \tilde H\}_{\tilde\omega} + \mathrm{O}\left( \frac{t^2}{2}
 \{ F_2^{\mathrm{inv}},\{ F_2^{\mathrm{inv}}, \tilde H \}_{\tilde\omega} \}_{\tilde\omega} \circ\psi_2(t) \right),
\end{equation}
on $[0,1]\times C'_2$. 
We have $\tilde H = H_2 + \tilde H_4^{\mathrm{inv}} + \mathrm{O}_\Sigma(\infty) \mod \mathcal{F}_{2,\geq 6}^{\mathrm{inv}}$,
 with $\tilde H_4^{\mathrm{inv}}\in \mathcal{F}_{2,4}^{\mathrm{inv}}$.
Hence, taking $t=1$ in \eqref{exp2psi3}, using \eqref{cohom1} and \eqref{cohom4}, we infer that
$$
\tilde H\circ\psi_2 = H_2 + \tilde H_4^{\mathrm{inv}} + \{ F_2^{\mathrm{inv}} , H_2 \}_{\tilde\omega} + 
\mathrm{O}_\Sigma(\infty) \mod \mathcal{F}_{2,\geq 6}^{\mathrm{inv}} .
$$
Therefore, we have to solve the cohomological equation
$$
\{ H_2, F_2^{\mathrm{inv}} \}_{\tilde\omega} = \tilde H_4^{\mathrm{inv}} + \mathrm{O}_\Sigma(\infty) \mod \mathcal{F}_{2,\geq 6}^{\mathrm{inv}}  ,
$$
which has a solution $F^{\mathrm{inv}}_4\in \mathcal{F}_{2,4}^{\mathrm{inv}}$ by using \eqref{cohom1}.

It is important to note that, thanks to \eqref{cohom4}, the whole procedure done in this second step takes place
 in $\mathcal{F}_{2,\geq 2}^{\mathrm{inv}}$ (if terms in $\mathcal{F}_{2,\geq 2}^0$ were to appear again then our two-steps procedure would fail).
 This kind of triangular stability is crucial.

\medskip

Let us assume that we have constructed $\psi_2,\ldots,\psi_{2k-2}$ such that 
$$
\tilde H\circ \psi_2\circ\cdots \circ\psi_{2k-2} =  H_2  \mod \mathcal{F}_{2,\geq 2k}^{\mathrm{inv}} .
$$
We want to prove that there exists $F^{\mathrm{inv}}_{2k}\in \mathcal{F}_{2,2k}^{\mathrm{inv}}$ such that, setting $\varphi_{2k} = \exp (\vec F^{\mathrm{inv}}_{2k})$, we have
$$
\tilde H\circ \psi_2\circ\cdots \circ\psi_{2k} = H_2 + \mathrm{O}_\Sigma(\infty) \mod \mathcal{F}_{2,\geq 2k+2}^{\mathrm{inv}}  .
$$
Using that 
$$
\frac{d}{dt}(\tilde H\circ\psi_2\circ\cdots \circ\psi_{2k}(t))=\{F^{\mathrm{inv}}_{2k},\tilde H\circ \psi_2\circ\cdots  \circ\psi_{2k-2} \}_{\tilde\omega}\circ\psi_{2k}(t) ,
$$
and since the flow $\psi_{2k}$ is well defined on $[0,1]\times C'_{2k}$, we have
\begin{multline}\label{exp2psik}
\tilde H\circ\psi_2\circ\cdots \circ\psi_{2k}(t) = \tilde H\circ\psi_2\circ\cdots \circ\psi_{2k-2} + \{F^{\mathrm{inv}}_{2k},\tilde H\circ
 \psi_2\circ\cdots \circ\psi_{2k-2} \}_{\tilde\omega} \\
+ \mathrm{O} \left(  \frac{t^2}{2} \{ F_{2k}^{\mathrm{inv}}, \{ F_{2k}^{\mathrm{inv}}, \tilde H\circ \psi_2\circ\cdots
 \circ\psi_{2k-2} \}_{\tilde\omega} \}_{\tilde\omega}  \right) ,
\end{multline}
on $[0,1]\times C'_{2k}$.
Using the recurrence assumption, we have 
$$
\tilde H\circ \psi_2\circ\cdots \circ\psi_{2k-2} = H_2 + \tilde{\tilde{H}}_{2k}^{\mathrm{inv}} + \mathrm{O}_\Sigma(\infty) \mod
 \mathcal{F}_{2,\geq 2k+2}^{\mathrm{inv}}  ,
$$
for some $\tilde{\tilde{H}}_{2k}^{\mathrm{inv}}\in \mathcal{F}_{2,2k}^{\mathrm{inv}}$. Hence, taking $t=1$ in \eqref{exp2psik}, using 
\eqref{cohom1} and \eqref{cohom4}, we infer that
$$
\tilde H\circ \psi_2\circ\cdots \circ\psi_{2k} = H_2 + \tilde{\tilde{H}}_{2k}^{\mathrm{inv}} + \{ F^{\mathrm{inv}}_{2k}, H_2ö€Žà¤ Š\}_{\tilde\omega}
 + \mathrm{O}_\Sigma(\infty) \mod \mathcal{F}_{2,\geq 2k+2}^{\mathrm{inv}} .
$$
Therefore, we have to solve the cohomological equation
$$
\{ H_2, F_{2k}^{\mathrm{inv}} \}_{\tilde\omega} = \tilde{\tilde{H}}_{2k}^{\mathrm{inv}} + \mathrm{O}_\Sigma(\infty)
 \mod \mathcal{F}_{2,\geq 2k+2}^{\mathrm{inv}}  ,
$$
which has a solution $F_{2k}^{\mathrm{inv}}\in \mathcal{F}_{2,2k}^{\mathrm{inv}}$ by using \eqref{cohom1}.

The recurrence is established.

\medskip

Considering, as in the first step, the formal infinite composition $\psi_2\circ\cdots\circ\psi_{2k}\cdots$, by the Borel theorem, there exists a smooth homogeneous mapping $\psi$ such that $\psi = \psi_2\circ\cdots\circ\psi_{2k}\cdots + \mathrm{O}_\Sigma(\infty)$.
By construction we have $\psi^\star \tilde \omega = \tilde\omega + \mathrm{O}_\Sigma(\infty)$, and using again Lemma \ref{lem_gen_Weinstein} (Appendix \ref{app:DW}), we modify slightly $\psi$, by composing it with a homogeneous diffeomorphism tangent to identity at infinite order, so that $\psi^\star \tilde \omega = \tilde\omega$.

\bigskip

\noindent{\bf Acknowledgment.}
The authors were partially supported by the grant ANR-15-CE40-0018 of the ANR (project SRGI).
The second author was partially supported by the grant ANR-13-BS01-0007-01 of the ANR (project GERASIC).

The authors warmly thank the referees for their useful remarks and questions that have greatly helped to improve the paper.



\begin{thebibliography}{99}

\small


\bibitem{Ar-86}
V.I. Arnol'd,
\textit{The asymptotic Hopf invariant and its applications},
Selecta Math. Soviet. {\bf 5} (1986), 327--345.

\bibitem{Ba-13} 
D. Barilari,
\textit{Trace heat kernel asymptotics in 3D contact sub-Riemannian geometry},
J. Math. Sci. {\bf 195} (2013), 391--41.


\bibitem{Bou-74} 
L. Boutet de Monvel,
\textit{Hypoelliptic operators with double characteristics and related pseudo-differential operators},
Comm. Pure Applied Math. {\bf 27} (1974), 585--639.

\bibitem{BoutetGuillemin}
L. Boutet de Monvel, V. Guillemin,
\textit{The spectral theory of Toeplitz operators},
Annals of Mathematics Studies, 99, Princeton University Press, Princeton, NJ; Univ. of Tokyo Press, Tokyo, 1981. v+161~pp.

\bibitem{yCdV-79}
Y. Colin de Verdi\`ere,
\textit{Sur le spectre des op\'erateurs \`a  g\'eod\'esiques toutes p\'eriodiques},
Comment. Math. helv.  {\bf 54} (1979), 508--522.

\bibitem{yCdV-83}
Y. Colin de Verdi\`ere,
\textit{Calcul du spectre de certaines nilvari\'et\'es compactes de dimension 3},
(French) [Calculation of the spectrum of some three-dimensional compact nilmanifolds],
S\'eminaire de Th\'eorie Spectrale et G\'eom\'etrie (Grenoble) (1983--1984), no. 2, 1--6.

\bibitem{yCdV-85}
Y. Colin de Verdi\`ere,
\textit{Ergodicit\'e et fonctions propres du laplacien},
Commun. Math. Phys. {\bf 102} (1985), 497--502.


\bibitem{yCdV-17}
Y. Colin de Verdi\`ere,
\textit{Magnetic fields and sub-Riemannian geometry },
Work in progress.

\bibitem{CHT_semEDP}
Y. Colin de Verdi\`ere, L. Hillairet, E. Tr\'elat,
\textit{Quantum ergodicity and quantum limits for sub-Riemannian Laplacians},
S\'eminaire Laurent Schwartz -- Equations aux d\'eriv\'ees partielles et applications, Ann\'ee 2014--2015, Exp. No. XX, 17 pp., Ed. Ec. Polytech., Palaiseau, 2016.

\bibitem{CHT-II}
Y. Colin de Verdi\`ere, L. Hillairet, E. Tr\'elat,
\textit{Spectral asymptotics for sub-Riemannian Laplacians II: microlocal Weyl measures},
Work in progress.

\bibitem{CHT-III}
Y. Colin de Verdi\`ere, L. Hillairet, E. Tr\'elat,
\textit{Spiraling of sR geodesics around the Reeb flow},
Preprint (2017).

\bibitem{CHW-15}
Y. Colin de Verdi\`ere, J. Hilgert, T. Weich,
\textit{Irreducible representations of $SL_2(\R)$ and the Peyresq's operators},
Work in progress.

\bibitem{Do-88}
V.J. Donnay,
\textit{Geodesic flow on the two-sphere. II. Ergodicity},
in: Dynamical systems (College Park, {MD}, 1986--87),
Lecture Notes in Math. {\bf 1342}, Springer (Berlin), 112--153, 1988.

\bibitem{DH-72}
H. Duistermaat \& L. H\"ormander,
\textit{Fourier Integral Operators II},
Acta Math. {\bf 128} (1972), 183--269.

\bibitem{DG-75} 
H. Duistermaat, V. Guillemin,
\textit{The spectrum of positive elliptic operators and periodic bicharacteristics},
Invent. Math. {\bf 29} (1975), 39--79.

\bibitem{Du-96}
J.J. Duistermaat,
\textit{Fourier integral operators},
Progress in mathematics {\bf 130}, Birkh\"auser, 1996.

\bibitem{Fo-Ha-13}
P. Foulon, B. Hasselblatt,
\textit{Contact Anosov flows on hyperbolic 3-manifolds},
Geom. Topol. {\bf 17} (2013), 1225--1252.

\bibitem{GKL-64}
L. Garding, T. Kotake, J. Leray,
\textit{Uniformisation et d\'eveloppement asymptotique de la solution du probl\`eme de Cauchy lin\'eaire, \`a  donn\'ees holomorphes; analogie avec la th\'eorie des ondes asymptotiques et approch\'ees (Probl\`eme de Cauchy, I bis et VI)},
Bull. Soc. Math. France {\bf 92} (1964), 263--361.

\bibitem{Ga-77}
B. Gaveau,
\textit{Principe de moindre action, propagation de la chaleur et estim\'ees sous-elliptiques sur certains groupes nilpotents},
Acta Math. {\bf 139} (1977), 95--153.

\bibitem{Geiges}
H. Geiges,
\textit{An introduction to contact topology},
Cambridge Studies in Advanced Mathematics, 109. Cambridge University Press, Cambridge, 2008.

\bibitem{GL-93}
P. G\'erard, E. Leichtnam,
\textit{Ergodic properties of eigenfunctions for the Dirichlet problem},
Duke Math. J. {\bf 71} (1993), no. 2, 559--607.

\bibitem{GW-86}
C. Gordon, E. Wilson,
\textit{The spectrum of the Laplacian on Riemannian Heisenberg manifolds},
Michigan Math. J. {\bf 33} (1986), 253--271.
   
\bibitem{HMR-87}
B. Helffer, A. Martinez, D. Robert,
\textit{Ergodicit\'e et limite semi-classique},
Commun. Math. Phys. {\bf 109} (1987), 313--326.

\bibitem{HV-00}
A. Hassell, A. Vasy,
\textit{Symbolic functional calculus and $N$-body resolvent estimates},
J. Funct. Anal. {\bf 173} (2000), 257--283.

\bibitem{Ho-67}
L. H\"ormander,
\textit{Hypoelliptic second order differential equations},
Acta Math. {\bf 119} (1967), 147--171.

\bibitem{Ho-68} 
L. H\"ormander,
\textit{The spectral function of an elliptic operator,}
Acta Math. {\bf 121} (1968), 193--218.
  
\bibitem{Ho-71}
L. H\"ormander,
\textit{Fourier Integral Operators I},
Acta Math. {\bf 127} (1971), 79--183.


\bibitem{Jakobson-97}
D. Jakobson,
\textit{Quantum limits on flat tori},
Annals of Math. {\bf 145} (1997), 235--266. 

\bibitem{JSSC-14}
D. Jakobson, Y. Safarov,  A. Strohmaier, 
\textit{The semiclassical theory of discontinuous systems and ray-splitting billiards}, 
With an appendix by Y. Colin de Verdi\`ere,
American J. Math. {\bf 137} (2015), no. 4, 859--906.

\bibitem{Jak-Zel-96}
D. Jakobson, S. Zelditch,
\textit{Classical limits of eigenfunctions for some completely integrable systems},
in: Emerging applications of number theory ({M}inneapolis, {MN}, 1996), IMA Vol. Math. Appl. {\bf 109} (1999), 329--354, Springer, New York.

\bibitem{Ka-30}
J. Karamata,
\textit{Neuer Beweis und Verallgemeinerung einiger Tauberian-S\"atze},
Math. Z. {\bf 33} (1931), 294--299.


\bibitem{KrieglMichor}
A. Kriegl, P. W. Michor,
\textit{The convenient setting of global analysis},
Mathematical Surveys and Monographs, 53. American Mathematical Society, Providence, RI, 1997.

\bibitem{McDuffSalamon}
D. McDuff, D. Salamon,
\textit{Introduction to symplectic topology},
Second edition, Oxford Mathematical Monographs, 1998.

\bibitem{Me-84}
R.B. Melrose,
\textit{The wave equation for a hypoelliptic operator with symplectic characteristics of codimension two},
J. Analyse Math. {\bf 44} (1984-1985), 134--182.

\bibitem{Me-Sj-78}
A. Menikoff, J. Sj\"ostrand,
\textit{On the eigenvalues of a class of hypoelliptic operators},
Math. Ann. {\bf 235} (1978), 55--85.

\bibitem{Metivier1976}
G. M\'etivier,
\textit{Fonction spectrale et valeurs propres d'une classe d'op\'erateurs non elliptiques},
Comm. Partial Differential Equations {\bf 1} (1976), no. 5, 467--519.

\bibitem{Mo-95}
R. Montgomery,
\textit{Hearing the zero locus of a magnetic field},
Commun. Math. Phys. {\bf 168} (1995), 651--675.

\bibitem{Mo-02}
R. Montgomery,
\textit{A tour of subriemannian geometries, their geodesics and applications},
Mathematical Surveys and Monographs {\bf 91}, American Mathematical Society, Providence, RI, 2002.

\bibitem{Nelson}
E. Nelson,
\textit{Topics in dynamics. I: flows},
Mathematical Notes, Princeton University Press, Princeton, N.J.; University of Tokyo Press, Tokyo 1969. iii+118 pp.

\bibitem{Petersen}
K. Petersen, 
\textit{Ergodic theory},
Corrected reprint of the 1983 original, Cambridge Studies in Advanced Mathematics, 2, Cambridge University Press, Cambridge, 1989. xii+329 pp.

\bibitem{Ra-VN-13}
N. Raymond, S. V{\~u} Ng{\d o}c,
\textit{Geometry and spectrum in 2D magnetic wells},
Ann. Inst. Fourier (Grenoble) {\bf 65} (2015), no. 1, 137--169. 

\bibitem{Shn-74}
A.I. Shnirelman,
\textit{Ergodic properties of eigenfunctions},
Uspehi Mat. Nauk {\bf 29} (1974), no. 6 (180), 181--182. 

\bibitem{Shn-93}
A.I. Shnirelman,
\textit{On the asymptotic properties of eigenfunctions in the regions of chaotic motion},
in: V. Lazutkin, \textit{KAM theory and semiclassical approximations to eigenfunctions}, Ergebnisse der Mathematik und ihrer Grenzgebiete (3), 24, Springer-Verlag, Berlin, 1993.

\bibitem{Strichartz_JDG1986}
R.S. Strichartz,
\textit{Sub-Riemannian geometry},
J. Diff. Geom. {\bf 24} (1986), 221--263.

\bibitem{We-71}
A. Weinstein,
\textit{Symplectic manifolds and their Lagrangian submanifolds},
Advances in Math. {\bf 6} (1971), 329--346.

\bibitem{We-74}
A. Weinstein,
\textit{Fourier integral operators, quantization and the spectra of Riemannian manifolds},
in: G\'eom\'etrie symplectique et Physique Math\'ematique, 
Colloques Internationaux CNRS {\bf 237} (1974), 289--298.

\bibitem{We-77}
A. Weinstein,
\textit{Asymptotic of eigenvalues clusters for the Laplacian plus a potential},
Duke Math. J. {\bf 44} (1977), 883--893.

\bibitem{Zel-87}
S. Zelditch,
\textit{Uniform distribution of eigenfunctions on compact hyperbolic surfaces},
Duke Math. J. {\bf 55} (1987), 919--941.

\bibitem{Ze-10}
S. Zelditch,
\textit{Recent developments in mathematical quantum chaos},
Current developments in mathematics, 2009, 115--204,
Int. Press, Somerville, MA (2010).

\bibitem{ZZ-96}
S. Zelditch, M. Zworski, 
\textit{Ergodicity of eigenfunctions for ergodic billiards},
Commun. Math. Phys. {\bf 175} (1996), no. 3, 673-682.

\bibitem{Zworski}
M. Zworski,
\textit{Semiclassical analysis},
Graduate Studies in Mathematics, 138. American Mathematical Society, Providence, RI, 2012.

\end{thebibliography}
\end{document}